\newtheorem{theorem}{Theorem}[section]
\newtheorem{proposition}[theorem]{Proposition}
\newtheorem{prop}[theorem]{Proposition}
\newtheorem{lemma}[theorem]{Lemma}
\newtheorem{corollary}[theorem]{Corollary}
\newtheorem{main}{Theorem}
\theoremstyle{definition}
\newtheorem{defn}[theorem]{Definition}
\newtheorem{example}[theorem]{Example}
\newtheorem{convention}[theorem]{Convention}
\theoremstyle{remark}
\newtheorem{remark}[theorem]{Remark}
\newcommand{\N}{\mathbb{N}}
\newcommand{\R}{\mathbb{R}}
\newcommand{\F}{\mathbb{F}}
\newcommand{\M}{\mathcal{M}}
\newcommand{\C}{\mathcal{C}}
\newcommand{\D}{\mathcal{D}}
\newcommand{\sF}{\mathscr{F}}
\newcommand{\sK}{\mathscr{K}}
\newcommand{\sN}{\mathscr{N}}
\newcommand{\sP}{\mathscr{P}}
\renewcommand{\emptyset}{\varnothing}
\renewcommand{\tilde}[1]{\widetilde{#1}}
\renewcommand{\hat}[1]{\widehat{#1}}
\DeclareMathOperator{\dom}{dom}
\DeclareMathOperator{\colim}{colim}
\DeclareMathOperator{\Arr}{Arr}
\DeclareMathOperator{\Sym}{Sym}
\DeclareMathOperator{\CMon}{CMon}
\DeclareMathOperator{\Ho}{Ho}
\DeclareMathOperator{\ho}{Ho}
\newcommand{\po}{\ar@{}[dr]|(.7){\Searrow}}
\newcommand{\pb}{\ar@{}[dr]|(.3){\Nwarrow}}
\DeclareMathOperator{\map}{map}
\newcommand{\cat}[1]{\mathcal{#1}}
\newcommand{\boxprod}{\mathbin\square}
\newcommand{\Sp}{\mathscr{S}}
\begin{document}
\title{Monoidal Bousfield Localizations and Algebras over Operads}

\author{David White}
\address{Department of Mathematics and Computer Science \\ Denison University
\\ Granville, OH 43023}
\email{david.white@denison.edu}

\begin{abstract}
We give conditions on a monoidal model category $\mathcal{M}$ and on a set of maps $\mathcal{C}$ so that the Bousfield localization of $\mathcal{M}$ with respect to $\mathcal{C}$ preserves the structure of algebras over various operads. This problem was motivated by an example that demonstrates that, for the model category of equivariant spectra, preservation does not come for free, even for cofibrant operads. We discuss this example in detail and provide a general theorem regarding when localization preserves $P$-algebra structure for an arbitrary operad $P$.

We characterize the localizations that respect monoidal structure and prove that all such localizations preserve algebras over cofibrant operads. As a special case we recover numerous classical theorems about preservation of algebraic structure under localization, in the context of spaces, spectra, chain complexes, and equivariant spectra. We also provide several new results in these settings, and we sharpen a recent result of Hill and Hopkins regarding preservation for equivariant spectra. To demonstrate our preservation result for non-cofibrant operads, we work out when localization preserves commutative monoids and the commutative monoid axiom, and again numerous examples are provided. Finally, we provide conditions so that localization preserves the monoid axiom.
\end{abstract}

\maketitle

\section{Introduction}

Bousfield localization is a powerful tool in homotopical algebra, with classical applications to homology localization for spaces and spectra \cite{bousfield-localization-spaces-wrt-homology}, to cellularization and nullification \cite{farjoun}, and to $p$-localization and completion \cite{bous79}. Hirschhorn generalized the machinery of Bousfield localization to the setting of model categories \cite{hirschhorn}, inverting any class of morphisms generated by a set. This general framework has seen tremendous application: it allows for the passage from levelwise model structures to stable model structures \cite{hovey-shipley-smith}, it is used to set up motivic homotopy theory \cite{hovey-spectra}, and it allows for the study of combinatorial model categories via simplicial presheaves \cite{htt} (Appendix A.2). The interplay between left Bousfield localization and monoidal structure has often proven fruitful, e.g. to put an $E_\infty$-algebra structure on connective $K$-theory \cite{EKMM}, for homotopy theoretic computations involving generalized Eilenberg-Maclane spaces \cite{bous-spaces, eternal-preprint, farjoun}, and, recently, to create an equivariant spectrum with a certain periodicity that is used to resolve the Kervaire invariant one problem \cite{kervaire}. In this paper, we further the study of the interplay between left Bousfield localization and monoidal model categories, we provide conditions so that left Bousfield localization preserves algebras over operads (and several important model categorical axioms), and we apply our results to numerous classical and new examples of interest.

Structured ring spectra have had numerous applications in stable homotopy theory \cite{EKMM, hovey-shipley-smith, MMSS}. Nowadays, structured ring spectra are often thought of as algebras over operads acting in any of the monoidal model categories for spectra. It is therefore natural to ask the extent to which Bousfield localization preserves such algebraic structure. For Bousfield localizations at homology isomorphisms this question is answered in \cite{EKMM} and \cite{MMSS}. The case for spaces is subtle and is addressed in \cite{bous-spaces}, \cite{eternal-preprint}, and \cite{farjoun}. More general Bousfield localizations are considered in \cite{CGMV}.

The preservation question may also be asked in the context of equivariant and motivic spectra, and it turns out the answer is far more subtle. In Example \ref{example:hill}, we discuss an example of a naturally occurring Bousfield localization of equivariant spectra that preserves the type of algebraic structure considered in \cite{EKMM} but fails to preserve the equivariant commutativity needed for the landmark results in \cite{kervaire}. We generalize this example in \ref{example:hill-ober-version}.

In order to understand this and related examples, we find conditions on a model category $\M$ and on a class of maps $\C$ so that the left Bousfield localization $L_\C$ with respect to $\C$ preserves the structure of algebras over various operads. After a review of the pertinent terminology in Section \ref{sec:background} we give our general preservation result in Section \ref{sec:main-preservation}, which we state here for the reader's convenience.

\begin{main} \label{main:preservation}
Let $\M$ be a monoidal model category and $\cat C$ a class of morphisms such that the Bousfield localization $L_\C(\M)$ exists and is a monoidal model category. Let $P$ be an operad valued in $\M$. If the subcategories of $P$-algebras in $\M$ and in $L_\C(\M)$ inherit transferred semi-model structures from $\M$ and $L_\C(\M)$ (with weak equivalences and fibrations defined via the forgetful functor) then $L_\C$ preserves $P$-algebras.
\end{main} 

In Section \ref{sec:monoidal-bous-loc} we characterize when $L_\C(\M)$ is a monoidal model category, proving the following theorem. 

\begin{main}
Suppose $\M$ is a cofibrantly generated monoidal model category in which cofibrant objects are flat. Then $L_\C$ is a monoidal Bousfield localization if and only if every map of the form $f \otimes id_K$, where $f$ is in $\C$ and $K$ is cofibrant, is a $\C$-local equivalence. If the domains of the generating cofibrations $I$ are cofibrant, it suffices to consider $K$ in the set of domains and codomains of the morphisms in $I$.
\end{main}

In Section \ref{sec:preservation-over-cofibrant}, we apply Theorem \ref{main:preservation} to numerous model categories of interest, obtaining preservation results for $\Sigma$-cofibrant operads such as $A_\infty$ and $E_\infty$ in model categories of spaces, spectra, chain complexes, and equivariant spectra. We recover several classical preservation results, and prove several new preservation results. A lattice of equivariant operads is presented, that interpolate between non-equivariant $E_\infty$-algebra structure and equivariant $E_\infty$-algebra structure. We apply our results to determine which localizations preserve the type of algebraic structure encoded by these operads. This new collection of operads is different from the $N_\infty$-operads of \cite{blumberg-hill} (that interpolate between equivariant $E_\infty$-algebra structure and genuine commutative structure), and a common generalization of both types of operads is discussed in \cite{gutierrez-white-equivariant}. Example \ref{example:hill} demonstrates that it is possible to preserve equivariant $E_\infty$-algebra structure, but fail to preserve genuine commutative structure. This motivates the latter half of the paper. 

In Section \ref{sec:preservation-of-commutative}, we turn to preservation of structure over non-cofibrant operads, specifically, preservation of commutative monoids. For categories of spectra the phenomenon known as rectification means that preservation of strict commutativity is equivalent to preservation of $E_\infty$-structure, but for general model categories (including equivariant spectra) there can be Bousfield localizations that preserve the latter type of structure and not the former. In the companion paper \cite{white-commutative-monoids} we introduced a condition on a monoidal model category called the \textit{commutative monoid axiom}, that guarantees that the category of commutative monoids inherits a model structure. We build on this work in Section \ref{sec:preservation-of-commutative} by providing conditions on the maps in $\C$ so that Bousfield localization preserves the commutative monoid axiom, proving the following theorem. 

\begin{main} \label{main:comm}
Assume $\M$ is a cofibrantly generated monoidal model category satisfying the strong commutative monoid axiom and with domains of the generating cofibrations cofibrant. Suppose that $L_\C(\M)$ is a monoidal Bousfield localization with generating trivial cofibrations $J_\C$. Then $L_\C(\M)$ satisfies the strong commutative monoid axiom if and only if $\Sym^n(f)$ is a $\C$-local equivalence for all $n \in \N$ and for all $f \in J_\C$. This occurs if and only if $\Sym(-)$ preserves $\C$-local equivalences between cofibrant objects.
\end{main}

In Section \ref{sec:commutative-preservation-applications}, we apply Theorems \ref{main:preservation} and \ref{main:comm} to obtain preservation results for commutative monoids in spaces, spectra, chain complexes, and equivariant spectra. We recover classical preservation results, and several new preservation results, including Theorem \ref{thm:recover-hill-preservation}, which sharpens and generalizes the main theorem of \cite{hill-hopkins}. Finally, in Section \ref{sec:localization-and-monoid-ax} we provide conditions so that $L_\C(\M)$ satisfies the monoid axiom when $\M$ does, proving the following theorem (see Section \ref{sec:localization-and-monoid-ax} for definitions of the unfamiliar terms, from \cite{batanin-berger}). 

\begin{main} \label{main:monoid}
Suppose $\M$ is a cofibrantly generated, left proper, $h$-monoidal model category such that the (co)domains of $I$ are cofibrant and are finite relative to the $h$-cofibrations and cofibrant objects are flat. Then for any monoidal Bousfield localization $L_\C$, the model category $L_\C(\M)$ satisfies the monoid axiom.
\end{main}

Because Theorem \ref{main:preservation} only requires transferred semi-model structures, Theorem \ref{main:monoid} is not required for preservation, but is required in order to have a comprehensive study of the relationship between left Bousfield localization and monoidal structure, as the monoid axiom is often required for purposes other than transferring model structures. As always, we provide applications of Theorem \ref{main:monoid} to the examples of interest in this paper: spectra, spaces, chain complexes, and equivariant spectra. Roughly half of this paper consists of applications to these examples. In the setting of Theorem \ref{main:monoid}, this requires some new results, including a verification that the commonly studied model structures on symmetric spectra are $h$-monoidal, and the introduction of new model structures on equivariant spectra that are combinatorial and $h$-monoidal.

\subsection*{Acknowledgments}

The author would like to gratefully acknowledge the support and guidance of his advisor Mark Hovey as this work was completed. The author is also indebted to Mike Hill, Carles Casacuberta, Justin Noel, and Clemens Berger for numerous helpful conversations. The author thanks Clark Barwick for catching an error in an early version of this work, Martin Franklin for suggesting applications of this work to simplicial sets, and Boris Chorny for suggesting a simplification in the proof of Theorem \ref{thm:monoid-axiom-loc}. This draft was improved by comments from Javier Guti\'{e}rrez, Brooke Shipley, and Cary Malkiewich. This paper also possesses a User's Guide \cite{white-user-guide}, where the interested reader can learn more, created as part of the User's Guide Project presented in \cite{user-guide-project}.

\section{Preliminaries}
\label{sec:background}

We assume the reader is familiar with basic facts about model categories. Excellent introductions to the subject can be found in \cite{dwyer-spalinski}, \cite{hirschhorn}, and \cite{hovey-book}. Throughout the paper we will assume $\M$ is a cofibrantly generated model category, with generating cofibrations $I$ and generating trivial cofibrations $J$.

Let $I$-cell denote the class of transfinite compositions of pushouts of maps in $I$, and let $I$-cof denote retracts of such. In order to run the small object argument, we will assume the domains $K$ of the maps in $I$ (and $J$) are $\kappa$-small relative to $I$-cell (resp. $J$-cell), i.e. given a regular cardinal $\lambda \geq \kappa$ and any $\lambda$-sequence $X_0\to X_1\to \dots$ formed of maps $X_\beta \to X_{\beta+1}$ in $I$-cell, then the map of sets $\colim_{\beta < \lambda} \M(K,X_\beta) \to \M(K,\colim_{\beta < \lambda} X_\beta)$ is a bijection. An object is small if there is some $\kappa$ for which it is $\kappa$-small. See Chapter 10 of \cite{hirschhorn} for a more thorough treatment of this material. For any object $X$ we have a cofibrant replacement $QX \to X$ and a fibrant replacement $X\to RX$.

We will at times also need the hypothesis that $\M$ possesses sets of generating (trivial) cofibrations $I$ and $J$ with domains (hence codomains) cofibrant. This hypothesis is satisfied by all model categories of interest in this paper, but does not come for free, even for combinatorial model categories $\M$. An example, due to Carlos Simpson, is discussed in Remark \ref{remark:simpson-example}. A method for finding sets $I$ and $J$ with cofibrant domains is given in Lemma \ref{lemma:I'UJ}.

Our model category $\M$ will be a monoidal category with product $\otimes$ and unit $S \in \M$. This means $\M$ satisfies the following two axioms.

\begin{enumerate}
  \item Unit Axiom: For any cofibrant $X$, the map $QS\otimes X \to S\otimes X\cong X$ is a weak equivalence.
  \item Pushout Product Axiom: Given any $f:X_0\to X_1$ and $g:Y_0\to Y_1$ cofibrations, $f\boxprod g: X_0\otimes Y_1 \coprod_{X_0\otimes Y_0}X_1\otimes Y_0 \to X_1\otimes Y_1$ is a cofibration. Furthermore, if either $f$ or $g$ is trivial then $f\boxprod g$ is trivial.
\end{enumerate}

Note that the pushout product axiom is equivalent to the statement that $-\otimes-$ is a Quillen bifunctor. Furthermore, it is sufficient to check the pushout product axiom on the generating maps $I$ and $J$, by Proposition 4.2.5 of \cite{hovey-book}. When we need $\M$ to be a simplicial model category, we require the SM7 axiom, which is analogous to the pushout product axiom.
 %Similar axioms define what it means for $\M$ to be a simplicial model category. In this context the pushout product axiom is called the SM7 axiom. It retains the same statement but where one of the maps $f,g$ is in $\M$ and the other is a morphism in $sSet$, the category of simplicial sets. For a topological model category the same axiom is used again but with one of the maps in $\M$ and one in $Top$. 
We refer the reader to Definition 4.2.18 in \cite{hovey-book} for details.

We will at times also need to assume that \textit{cofibrant objects are flat} in $\M$, i.e. that whenever $X$ is cofibrant and $f$ is a weak equivalence then $f\otimes X$ is a weak equivalence. When a monoidal model category satisfies this condition, it is called a {\em tensor model category} in \cite{fausk-isaksen} (Section 12). Finally, we remind the reader of the monoid axiom of Definition 3.3 in \cite{SS00}.

Given a class of maps $\C$ in $\M$, let $\C \otimes \M$ denote the class of maps $f \otimes id_X$ where $f\in \C$ and $X\in \M$. A model category is said to satisfy the \textit{monoid axiom} if every map in (Trivial-Cofibrations $\otimes \M)$-cell is a weak equivalence.

We will be discussing preservation of algebraic structure as encoded by an operad. Let $P$ be an operad valued in $\M$ (for a general discussion of the interplay between operads and homotopy theory see \cite{BM03}). Let $P$-alg($\M$) denote the category whose objects are $P$-algebras in $\M$ (i.e. admit an action of $P$) and whose morphisms are $P$-algebra homomorphisms (i.e. respect the $P$-action). The free $P$-algebra functor from $\M$ to $P$-alg$(\M)$ is left adjoint to the forgetful functor. We will say that $P$-alg$(\M)$ \textit{inherits} a model structure from $\M$ if the model structure is transferred across this adjunction, i.e. if a $P$-algebra homomorphism is a weak equivalence (resp. fibration) if and only if it is so in $\M$. In Section 4 of \cite{BM03}, an operad $P$ is said to be \textit{admissible} if $P$-alg$(\M)$ inherits a model structure in this way.

Finally, we remind the reader about the process of Bousfield localization as discussed in \cite{hirschhorn}. This is a general machine that starts with a (nice) model category $\M$ and a set of morphisms $\C$ and produces a new model structure $L_\C(\M)$ on the same category in which maps in $\C$ are now weak equivalences. Furthermore, this is done in a universal way, introducing the smallest number of new weak equivalences as possible. When we say Bousfield localization we will always mean left Bousfield localization. So the cofibrations in $L_\C(\M)$ will be the same as the cofibrations in $\M$.

Bousfield localization proceeds by first constructing the fibrant objects of $L_\C(\M)$ and then constructing the weak equivalences. In both cases this is done via simplicial mapping spaces $\map(-,-)$. If $\M$ is a simplicial or topological model category then one can use the hom-object in $sSet$ or $Top$. Otherwise a framing is required to construct the simplicial mapping space. We refer the reader to \cite{hovey-book} or \cite{hirschhorn} for details on this process.

An object $N$ is said to be \textit{$\C$-local} if it is fibrant in $\M$ and if for all $g:X\to Y$ in $\C$, $\map(g,N):\map(Y,N)\to \map(X,N)$ is a weak equivalence in $sSet$. These objects are precisely the fibrant objects in $L_\C(\M)$. A map $f:A\to B$ is a \textit{$\C$-local equivalence} if for all $N$ as above, $\map(f,N):\map(B,N)\to \map(A,N)$ is a weak equivalence. These maps are precisely the weak equivalences in $L_\C(\M)$.

It is often more convenient to work with left Bousfield localizations that invert a set of cofibrations (i.e. with left derived Bousfield localization). This can always be guaranteed in the following way. For any map $f$ let $Qf$ denote the cofibrant replacement and let $\tilde{f}$ denote the left factor in the cofibration-trivial fibration factorization of $Qf$. Then $\tilde{f}$ is a cofibration between cofibrant objects and we may define $\tilde{\C} = \{\tilde{f}\;|\; f\in \C\}$. Localization with respect to $\tilde{\C}$ yields the same result as localization with respect to $\C$, so our assumption that the maps in $\C$ are cofibrations between cofibrant objects loses no generality. We thus make the following convention.

\begin{convention} \label{convention:C-all-cofibrations}
Throughout this paper we assume $\C$ is a set of cofibrations between cofibrant objects, and that the model category $L_\C(\M)$ exists.
\end{convention}

The existence of $L_\C(\M)$ can be guaranteed by assuming $\M$ is left proper and either combinatorial (as discussed in \cite{barwickSemi}) or cellular (as discussed in \cite{hirschhorn}). A model category is \textit{left proper} if pushouts of weak equivalences along cofibrations are again weak equivalences. We will make this a standing hypothesis on $\M$. However, as we have not needed the cellularity or combinatoriality assumptions for our work, outside of the existence of $L_\C(\M)$, we have decided not to assume them. In this way if a Bousfield localization is known to exist for some reason other than the theory in \cite{hirschhorn} then our results will be applicable.

\section{General Preservation Result}
\label{sec:main-preservation}

In this section we provide a general result regarding when Bousfield localization preserves $P$-algebras. We must first provide a precise definition for this concept. Throughout this section, let $\M$ be a monoidal model category and let $\C$ be a class of maps in $\M$ such that Bousfield localization $L_\C(\M)$ is a also monoidal model category. On the model category level the functor $L_\C$ is the identity. So when we write $L_\C$ as a functor we shall mean the composition of derived functors $\Ho(\M)\to \Ho(L_\C(\M)) \to \Ho(\M)$, i.e. $E\to L_\C(E)$ is the unit map of the adjunction $\Ho(\M) \leftrightarrows \Ho(L_\C(\M))$. In particular, for any $E$ in $\M$, $L_\C(E)$ is weakly equivalent to $R_\C Q E$ where $R_\C$ is a choice of fibrant replacement in $L_\C(\M)$ and $Q$ is a cofibrant replacement in $\M$. 

Let $P$ be an operad valued in $\M$. Because the objects of $L_\C(\M)$ are the same as the objects of $\M$, $P$ is also valued in $L_\C(\M)$. Thus, we may consider $P$-algebras in both categories and these classes of objects agree (because the $P$-algebra action is independent of the model structure). We denote the categories of $P$-algebras by $P$-alg($\M)$ and $P$-alg$(L_\C(\M)$). These are identical as categories, but in a moment they will receive different model structures.

\begin{defn}
Assume that $\M$ and $L_\C(\M)$ are monoidal model categories. Then $L_\C$ is said to \textit{preserve $P$-algebras} if 

\begin{enumerate}
\item When $E$ is a $P$-algebra there is some $P$-algebra $\tilde{E}$ that is weakly equivalent in $\M$ to $L_\C(E)$.

\item In addition, when $E$ is a cofibrant $P$-algebra, then there is a choice of $\tilde{E}$ in $P$-alg$(\M)$ with $U(\tilde{E})$ local in $\M$, there is a $P$-algebra homomorphism $r_E:E\to \tilde{E}$ that lifts the localization map $l_E:E\to L_\C(E)$ up to homotopy, and there is a weak equivalence $\beta_E:L_\C(U(E))\to U(\tilde{E})$ such that $\beta_E \circ l_{UE} \cong Ur_E$ in $\ho(\M)$.
\end{enumerate}
\end{defn}

The notion of preservation was also considered in \cite{CGMV}, but only for cofibrant $E$. Recall that when we say $P$-alg$(\M)$ \textit{inherits} a model structure from $\M$ we mean that this model structure is transferred by the free-forgetful adjunction. In particular, a map of $P$-algebras $f$ is a weak equivalence (resp. fibration) if and only if $f$ is a weak equivalence (resp. fibration) in $\M$. 

\begin{theorem} \label{bigthm}
Let $\M$ be a monoidal model category such that the Bousfield localization $L_\C(\M)$ exists and is a monoidal model category. Let $P$ be an operad valued in $\M$. If the categories of $P$-algebras in $\M$ and in $L_\C(\M)$ inherit model structures from $\M$ and $L_\C(\M)$ then $L_\C$ preserves $P$-algebras.
\end{theorem}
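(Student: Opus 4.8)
The plan is to exploit the fact that, on underlying categories, $P$-alg$(\M)$ and $P$-alg$(L_\C(\M))$ are literally the same category, and that the two transferred model structures differ only by having more weak equivalences in the latter; the whole proof is then an application of the universal property of Bousfield localization. First I would record the formal content of the hypothesis: since both categories of $P$-algebras inherit their model structures along the free--forgetful adjunction, a $P$-algebra homomorphism is a weak equivalence (resp.\ fibration) in $P$-alg$(L_\C(\M))$ if and only if its underlying map is a weak equivalence (resp.\ fibration) in $L_\C(\M)$, and similarly over $\M$. Because the cofibrations of $\M$ and $L_\C(\M)$ coincide (indeed $L_\C(\M)$ may be given the same generating cofibrations as $\M$, by \cite{hirschhorn}), the transferred structures on $P$-alg$(\M)$ and $P$-alg$(L_\C(\M))$ have the same generating cofibrations, hence the same cofibrations; and every weak equivalence of $P$-alg$(\M)$ is one of $P$-alg$(L_\C(\M))$, since the analogous implication holds on underlying objects. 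In particular the identity functor $P$-alg$(\M)\to P$-alg$(L_\C(\M))$ is left Quillen, and a cofibrant $P$-algebra in $\M$ is also a cofibrant $P$-algebra in $L_\C(\M)$.

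For part (1), given a $P$-algebra $E$, I would set $\tilde E := R_\C^P E$, a fibrant replacement of $E$ in $P$-alg$(L_\C(\M))$, with structure map $j\colon E\to \tilde E$. By the remarks above, $j$ is a $P$-algebra homomorphism whose underlying map is a $\C$-local equivalence, and the underlying object of $\tilde E$ is fibrant in $L_\C(\M)$, i.e.\ $\C$-local. Thus $j$ exhibits $\tilde E$ as a $\C$-localization of the underlying object of $E$: in $\Ho(\M)$ the map $j$ factors through the localization (unit) map $u_E\colon E\to L_\C(E)$ as $j=\psi\circ u_E$ for a unique $\psi\colon L_\C(E)\to\tilde E$, because $u_E$ is initial in $\Ho(\M)$ among maps from $E$ to $\C$-local objects (\cite{hirschhorn}). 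Since $u_E$ and $j$ are both $\C$-local equivalences, so is $\psi$; and since the source and target of $\psi$ are $\C$-local, $\psi$ is a weak equivalence in $\M$ by the Whitehead theorem for $\C$-local objects (\cite{hirschhorn}). Hence the $P$-algebra $\tilde E$ is weakly equivalent in $\M$ to $L_\C(E)$.

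For part (2), let $E$ be cofibrant in $P$-alg$(\M)$, hence also cofibrant in $P$-alg$(L_\C(\M))$ by the first paragraph, and take $\tilde E$ and $j\colon E\to\tilde E$ exactly as in part (1); one may even arrange $j$ to be an acyclic cofibration in $P$-alg$(L_\C(\M))$. Then $j$ is an honest $P$-algebra homomorphism, and the factorization $j=\psi\circ u_E$ from part (1) rearranges to $u_E=\psi^{-1}\circ j$ in $\Ho(\M)$. In other words, composing the $P$-algebra homomorphism $j\colon E\to\tilde E$ with the weak equivalence $\psi^{-1}\colon \tilde E\to L_\C(E)$ recovers the localization map, so $j$ is the desired lift of $E\to L_\C(E)$ to a $P$-algebra homomorphism.

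The genuinely delicate point is the identification in part (1): that the fibrant replacement $\tilde E$ taken inside $P$-alg$(L_\C(\M))$ really computes $L_\C(E)$ and that $j$ represents the localization map. This is exactly the step where one must be careful to work in the correct homotopy category, and it rests on two standard facts about left Bousfield localization --- the universal property of the localization unit and the fact that a $\C$-local equivalence between $\C$-local objects is a weak equivalence in $\M$. Everything else is bookkeeping; notably, the free $P$-algebra functor is never used explicitly, so no additional hypotheses on $\M$ (cofibrancy of the unit, flatness of cofibrant objects, the monoid axiom, tractability, \dots) are needed beyond the standing left-properness assumption --- the hypothesis that both categories of $P$-algebras inherit model structures does all the work.
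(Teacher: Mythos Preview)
Your argument is correct. You avoid the paper's cofibrant replacement $Q_P$ entirely: rather than comparing $R_\C QE$ to $R_{\C,P}Q_P E$ through explicit lifting diagrams in $\M$ and $L_\C(\M)$, you observe directly that the fibrant replacement $j\colon E\to R_{\C,P}E$ in $P$-alg$(L_\C(\M))$ is a $\C$-local equivalence into a $\C$-local object, and invoke the universal property of the localization unit in $\Ho(\M)$ to identify it with $E\to L_\C(E)$. This is cleaner for the theorem as stated. The paper's route, however, is chosen with an eye toward the sequel (Corollary~\ref{cor:loc-pres-alg-for-semi}): in a semi-model category the fibrant replacement $R_{\C,P}$ is only guaranteed to exist on objects with cofibrant domain, so the detour through $Q_P E$ becomes essential there, and the paper's proof of Theorem~\ref{bigthm} is written so that it transports verbatim to that setting. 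Your argument would need to reinsert $Q_P$ at that point.
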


\begin{proof}
Let $R_\C$ denote fibrant replacement in $L_\C(\M)$, let $R_{\C,P}$ denote fibrant replacement in $P$-alg$(L_\C(\M))$, and let $Q_P$ denote cofibrant replacement in $P$-alg$(\M)$. We will prove the first form of preservation and our method of proof will allow us to deduce the second form of preservation in the special case where $E$ is a cofibrant $P$-algebra.

Let $E$ be a $P$-algebra, and define $\tilde{E} = R_{\C,P}Q_P(E)$. Because $Q$ is the left derived functor of the identity adjunction between $\M$ and $L_\C(\M)$, and $R_\C$ is the right derived functor of the identity, we know that $L_\C(E) \simeq R_\C Q(E)$. We must therefore show $R_\C Q(E)\simeq R_{\C,P}Q_P(E)$.

The map $Q_P E\to E$ is a trivial fibration in $P$-alg$(\M)$, hence in $\M$. The map $QE\to E$ is also a weak equivalence in $\M$. Consider the following lifting diagram in $\M$:
\begin{align} \label{diagram-cof-rep-lift}
\xymatrix{\emptyset \ar[r] \ar@{^{(}->}[d] & Q_P E \ar@{->>}[d]^\simeq \\
QE \ar[r] \ar@{..>}[ur] & E}
\end{align}

The lifting axiom gives the map $QE\to Q_P E$ and it is necessarily a weak equivalence in $\M$ by the 2 out of 3 property.

Since $Q_P E$ is a $P$-algebra in $\M$ it must also be a $P$-algebra in $L_\C(\M)$, since the monoidal structure of the two categories is the same. We may therefore construct a lift:
\begin{align} \label{diagram:RCQPE}
\xymatrix{Q_P E \ar@{^{(}->}[d] \ar[r] & R_{\C,P} Q_P E \ar@{->>}[d]\\
R_\C Q_P E \ar[r] \ar@{..>}[ur] & \ast}
\end{align}

In this diagram the left vertical map is a weak equivalence in $L_\C(\M)$ and the top horizontal map is a weak equivalence in $P$-alg$(L_\C(\M))$. Because the model category $P$-alg$(L_\C(\M))$ inherits weak equivalences from $L_\C(\M)$, this map is a weak equivalence in $L_\C(\M)$. Therefore, by the 2 out of 3 property, the lift is a weak equivalence in $L_\C(\M)$. We make use of this map as the horizontal map in the lower right corner of the diagram below.

The top horizontal map $QE \to Q_P E$ in the following diagram is the first map we constructed, which was proven to be a weak equivalence in $\M$. The square in the diagram below is then obtained by applying $R_\C$ to that map. In particular, $R_\C QE \to R_\C Q_P E$ is a weak equivalence in $L_\C(\M)$:
\begin{align} \label{diagram:finish-main}
\xymatrix{QE \ar[r] \ar[d] & Q_P E\ar[d] & \\ R_\C QE \ar[r] & R_\C Q_P E \ar[r] & R_{\C,P}Q_P E}
\end{align}

We have shown that both of the bottom horizontal maps are weak equivalences in $L_\C(\M)$. Thus, by the 2 out of 3 property, their composite $R_\C QE \to R_{\C,P}Q_PE$ is a weak equivalence in $L_\C(\M)$. All the objects in the bottom row are fibrant in $L_\C(\M)$, so these $\C$-local equivalences are actually weak equivalences in $\M$.

As $E$ was a $P$-algebra and $Q_P$ and $R_{\C,P}$ are endofunctors on categories of $P$-algebras, it is clear that $R_{\C,P}Q_PE$ is a $P$-algebra. We have just shown that $L_\C(E)$ is weakly equivalent to this $P$-algebra, so we are done.

We turn now to the case where $E$ is assumed to be a cofibrant $P$-algebra. We have seen that there is an $\M$-weak equivalence $R_\C QE \to R_{\C,P}Q_P E$, and above we took $R_{\C,P}Q_P E$ in $\M$ as our representative for $L_\C(E)$ in $\Ho(\M)$. Because $E$ is a cofibrant $P$-algebra, there are weak equivalences $E \leftrightarrows Q_P(E)$ in $P$-alg$(L_\C(\M))$. This is because all cofibrant replacements of a given object are weakly equivalent, e.g. by diagram (\ref{diagram-cof-rep-lift}). So passage to $Q_P(E)$ is unnecessary when $E$ is cofibrant, and we take $\tilde{E} = R_{\C,P} E$ as our representative for $L_\C(E)$. Observe that $U(\tilde{E})$ is local because the model structure on $P$-algebras is transferred. The $P$-algebra morphism $r_E:E\to \tilde{E}$ is just the fibrant replacement map $R_{\cat{C},P}$, and lifts the localization map $E\to L_\C(E)$ in $\Ho(\M)$. The comparison $\beta_E$ is the following lift in $L_\C(\M)$:

\begin{align} \label{diagram:defining-beta}
\xymatrix{UE \ar@{^(->}[d]_{\simeq_\cat{C}} \ar[r] & U\tilde{E} \ar@{->>}[d] \\
L_\cat{C}(UE) \ar[r] \ar@{..>}[ur]_{\beta_E}& \ast}
\end{align}
The two out of three property guarantees that $\beta_E$ is a weak equivalence (again using that the model structure on $P$-algebras is transferred), and the diagram above demonstrates that $\beta_E \circ l_{UE} \cong Ur_E$ in $\ho(\M)$.
\end{proof}

This theorem alone would not be a satisfactory answer to the question of when $L_\C$ preserves $P$-algebras, because there is no clear way to check the hypotheses. For this reason, in the coming sections we will discuss conditions on $\M$ and $P$ so that $P$-algebras inherit model structures, and then we will discuss which localizations $L_\C$ preserve these conditions (so that $P$-alg$(L_\C(\M))$ inherits a model structure from $L_\C(\M)$). One such condition on $\M$ is the monoid axiom. In Section \ref{sec:localization-and-monoid-ax}, we discuss which localizations $L_\C$ preserve the monoid axiom. However, it will turn out that the monoid axiom is not necessary in order for our preservation results to apply. This is because the work in \cite{hovey-monoidal} and \cite{spitzweck-thesis} produces semi-model structures on $P$-algebras and these will be enough for our proof above to go through.

Observe that in the proof above we only used formal properties of fibrant and cofibrant replacement functors, and the fact that the model structures on $P$-algebras were inherited from $\M$ and $L_\C(\M)$. So it should not come as a surprise to experts that the same proof works when $P$-algebras only form semi-model categories. For completeness, we remind the reader of the definition of a semi-model category. The motivating example is when $\D$ is obtained from $\M$ via the general transfer principle for transferring a model structure across an adjunction (see Lemma 2.3 in \cite{SS00} or Theorem 12.1.4 in \cite{fresse-book}) when not all the conditions needed to get a full model structure are satisfied.

In particular, the reader should imagine that weak equivalences and fibrations in $\D$ are maps that forget to weak equivalences and fibrations in $\M$, and that the generating (trivial) cofibrations of $\D$ are maps of the form $F(I)$ and $F(J)$ where $F:\M \to \D$ is the free algebra functor and $I$ and $J$ are the generating (trivial) cofibrations of $\M$. The following is Definition 1 from \cite{spitzweck-thesis} and Definition 12.1.1 in \cite{fresse-book}. Cofibrant should be taken to mean cofibrant in $\D$.

\begin{defn}

A \textit{semi-model category} is a bicomplete category $\D$, an adjunction $F:\M \leftrightarrows \D:U$ where $\M$ is a model category, and subcategories of weak equivalences, fibrations, and cofibrations in $\D$ satisfying the following axioms:

\begin{enumerate}
\item $U$ preserves fibrations and trivial fibrations.
\item $\D$ satisfies the two out of three axiom and the retract axiom.
\item Cofibrations in $\D$ have the left lifting property with respect to trivial fibrations. Trivial cofibrations in $\D$ whose domain is cofibrant have the left lifting property with respect to fibrations.
\item Every map in $\D$ can be functorially factored into a cofibration followed by a trivial fibration. Every map in $\D$ whose domain is cofibrant can be functorially factored into a trivial cofibration followed by a fibration.
\item The initial object in $\D$ is cofibrant.
\item Fibrations and trivial fibrations are closed under pullback.
\end{enumerate}

$\D$ is said to be \textit{cofibrantly generated} if there are sets of morphisms $I'$ and $J'$ in $\D$ such that $I'$-inj is the class of trivial fibrations and $J'$-inj the class of fibrations in $\D$, if the domains of $I'$ are small relative to $I'$-cell, and if the domains of $J'$ are small relative to maps in $J'$-cell whose domain becomes cofibrant in $\M$.
\end{defn}

Note that the only difference between a semi-model structure and a model structure is that one of the lifting properties and one of the factorization properties requires the domain of the map in question to be cofibrant. Because fibrant and cofibrant replacements are constructed via factorization, (4) implies that every object has a cofibrant replacement and that objects with cofibrant domain have fibrant replacements. So one could construct a fibrant replacement functor that first does cofibrant replacement and then does fibrant replacement. These functors behave as they would in the presence of a full model structure. 

We are now prepared to state our preservation result in the presence of only a semi-model structure on $P$-algebras. Again, when we say $P$-algebras inherit a semi-model structure we mean with weak equivalences and fibrations reflected and preserved by the forgetful functor.

\begin{corollary} \label{cor:loc-pres-alg-for-semi}
Let $\M$ be a monoidal model category such that the Bousfield localization $L_\C(\M)$ exists and is a monoidal model category. Let $P$ be an operad valued in $\M$. If the subcategories of $P$-algebras in $\M$ and in $L_\C(\M)$ inherit semi-model structures from $\M$ and $L_\C(\M)$ then $L_\C$ preserves $P$-algebras.
\end{corollary}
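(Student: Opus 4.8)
The plan is to observe that the proof of Theorem~\ref{bigthm} never used the full strength of a model structure on $P$-algebras, only the existence of well-behaved cofibrant and fibrant replacement functors together with the fact that weak equivalences and fibrations on $P$-algebras are detected by the forgetful functor. Since the paragraph preceding the corollary already verifies that a semi-model category has a cofibrant replacement functor (from axiom (4), first factorization) and that objects with cofibrant domain have fibrant replacements (from axiom (4), second factorization), and that in particular one can build a fibrant replacement functor on all objects by first replacing cofibrantly and then fibrantly, essentially all the pieces are in place. So the proof will largely be a matter of re-running the argument of Theorem~\ref{bigthm} and checking at each step that only semi-model-categorical input is invoked.

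Concretely, I would proceed as follows. First, set $\tilde E = R_{\C,P} Q_P(E)$ exactly as before, where now $Q_P$ is cofibrant replacement in the semi-model category $P$-alg$(\M)$ and $R_{\C,P}$ is fibrant replacement in the semi-model category $P$-alg$(L_\C(\M))$; note $R_{\C,P}$ is defined on $Q_P(E)$ precisely because its domain is cofibrant, which is all axiom (4) guarantees. Second, reproduce lifting diagram~(\ref{diagram-cof-rep-lift}): the map $Q_P E \to E$ is a trivial fibration in $P$-alg$(\M)$, hence forgets to a trivial fibration in $\M$; the lift $QE \to Q_P E$ exists because $\emptyset \to QE$ is a cofibration in $\M$ with $\emptyset$ cofibrant, and then it is an $\M$-weak equivalence by two-out-of-three. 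Third, construct the lift $R_\C Q_P E \to R_{\C,P} Q_P E$: here $Q_P E$ is cofibrant in $P$-alg$(\M)$, so by the standing assumption that $U$ reflects and preserves cofibrations-into-the-inherited-structure --- more precisely, since the generating cofibrations of the inherited semi-model structure are $F(I)$, applying $U$ and using adjunction shows $Q_P E$ is cofibrant enough for the second factorization/lifting axioms to apply --- the trivial cofibration $Q_P E \to R_\C Q_P E$ in $L_\C(\M)$ has the left lifting property against the fibration $R_{\C,P} Q_P E \to \ast$. Fourth, the final two-out-of-three chain $R_\C QE \to R_\C Q_P E \to R_{\C,P} Q_P E$ goes through verbatim, using that weak equivalences in $P$-alg$(L_\C(\M))$ forget to weak equivalences in $L_\C(\M)$, that $R_\C$ preserves $\M$-weak equivalences between cofibrant objects, and that $\C$-local equivalences between $L_\C(\M)$-fibrant objects are $\M$-weak equivalences. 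The cofibrant case is handled as in Theorem~\ref{bigthm}: when $E$ is already cofibrant in $P$-alg$(L_\C(\M))$ one may skip $Q_P$, take $\tilde E = R_{\C,P} E$, and the localization map lifts to the $P$-algebra fibrant replacement map $E \to R_{\C,P} E$.

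I would present this not as a repetition but as a remark that the proof of Theorem~\ref{bigthm} applies mutatis mutandis, flagging the two or three points where the cofibrancy hypotheses in the semi-model axioms are genuinely needed (constructing $R_{\C,P}$ on $Q_P E$, and lifting against the trivial cofibration $Q_P E \to R_\C Q_P E$) and noting that in each case the relevant domain is cofibrant by construction. The main obstacle --- really the only subtlety --- is bookkeeping around which objects are cofibrant in which of the several semi-model categories in play ($P$-alg$(\M)$, $P$-alg$(L_\C(\M))$, and the underlying $\M$, $L_\C(\M)$), since the restricted lifting and factorization axioms only fire when the domain is cofibrant; one must check that $QE$, $Q_P E$, and $R_\C Q_P E$ each sit in the correct cofibrancy class for the step in which they appear. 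Since $QE$ is $\M$-cofibrant, $Q_P E$ is cofibrant in $P$-alg of both categories (the inherited cofibrations agree, as the model structures share the same $F(I)$), and $R_\C Q_P E$ is $L_\C(\M)$-cofibrant (being the source of a trivial cofibration out of a cofibrant object, or simply because cofibrations in $L_\C(\M)$ and $\M$ coincide), every invocation is legitimate, and the corollary follows.
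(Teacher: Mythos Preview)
Your proposal is correct and follows essentially the same approach as the paper's proof, which likewise re-runs Theorem~\ref{bigthm} and flags exactly where cofibrancy of domains is needed for the semi-model axioms to apply. One small simplification: the lift $R_\C Q_P E \to R_{\C,P} Q_P E$ is taken in the full model category $L_\C(\M)$, so no cofibrancy hypothesis on the domain is needed there and your digression about $U$ and $F(I)$ can be dropped---the only genuine invocations of the semi-model cofibrancy restriction are in constructing $R_{\C,P}$ on $Q_P E$ (and on $E$ in the cofibrant case), exactly as the paper notes.
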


\begin{proof}

The proof proceeds exactly as the proof of Theorem \ref{bigthm}. We highlight where care must be taken in the presence of semi-model categories. As remarked above, the cofibrant replacement $Q_P$ in the semi-model category $P$-alg$(\M)$ exists and the cofibrant replacement map $Q_P E \to E$ is a weak equivalence in $P$-alg($\M)$, hence in $\M$, because the semi-model structure is transferred. Diagram (\ref{diagram-cof-rep-lift}) is a lifting diagram in $\M$, so still yields a weak equivalence $QE\to Q_P E$. 

Next, the fibrant replacement $R_\C Q_P E$ is a replacement in the model category $L_\C(\M)$. The fibrant replacement $Q_P E \to R_{\C,P}Q_P E$ is a fibrant replacement in the semi-model category $P$-alg$(L_\C(\M))$, and exists because the domain of $Q_P E \to \ast$ is cofibrant in $P$-alg$(L_\C(\M))$. The resulting object $R_{\C,P}Q_P E$ is fibrant in $P$-alg$(L_\C(\M))$ hence in $L_\C(\M)$, since the semi-model structure is transferred. The lift in (\ref{diagram:RCQPE}) is a lift in $L_\C(\M)$, and again by the two out of three property in $L_\C(\M)$ the diagonal map is a $\C$-local equivalence:
\begin{align*}
\xymatrix{Q_P E \ar@{^{(}->}[d] \ar[r] & R_{\C,P} Q_P E \ar@{->>}[d]\\
R_\C Q_P E \ar[r] \ar@{..>}[ur] & \ast}
\end{align*}

Next, the map $R_\C Q E\to R_\C Q_P E$ in (\ref{diagram:finish-main}) is fibrant replacement in the model category $L_\C(\M)$, and so the argument that $R_\C QE \to R_\C Q_P E$ is a $\C$-local equivalence remains unchanged. 
\begin{align*}
\xymatrix{QE \ar[r] \ar[d] & Q_P E\ar[d] & \\ R_\C QE \ar[r] & R_\C Q_P E \ar[r] & R_{\C,P}Q_P E}
\end{align*}

The composite across the bottom $R_\C QE \to R_{\C,P} Q_P E$ is a weak equivalence between fibrant objects in $L_\C(\M)$ and so is a weak equivalence in $\M$, as in the proof of the theorem.

Finally, for the case of $E$ cofibrant in the semi-model category $P$-alg$(\M)$, note that the localization map $E\to L_\C(E)$ is again fibrant replacement $E \to R_{\C,P} E$ in $P$-alg($L_\C(\M))$. This exists because the domain is cofibrant by assumption. By construction, this map is a $P$-algebra morphism, as desired. The lift defining $\beta$ in (\ref{diagram:defining-beta}) occurs in $L_\C(\M)$, and the rest of the proof only uses that weak equivalences and fibrations in $P$-alg($L_\C(\M))$ forget to weak equivalences and fibrations in $L_\C(\M)$.

\end{proof}

\begin{remark}
Corollary \ref{cor:loc-pres-alg-for-semi} has been generalized to algebras over colored operads in \cite{white-yau}, and to right Bousfield localization in \cite{white-yau-coloc}. It has been applied to localizations of Smith ideals in \cite{white-yau6}.
\end{remark}

\section{Monoidal Bousfield Localizations}
\label{sec:monoidal-bous-loc}

In both Theorem \ref{bigthm} and Corollary \ref{cor:loc-pres-alg-for-semi} we assumed that $L_\C(\M)$ is a monoidal model category. In this section we provide conditions on $\M$ and $\C$ so that this occurs. First, we provide an example demonstrating that the pushout product axiom can fail for $L_\C(\M)$, even if it holds for $\M$.

\begin{example} \label{example:ppAxiom-fails-stmod}
It is not true that every Bousfield localization of a monoidal model category is a monoidal model category. Let $R = \F_2[\Sigma_3]$. An $R$ module is simply an $\F_2$ vector space with an action of the symmetric group $\Sigma_3$. Because $R$ is a Frobenius ring, we may pass from $R$-mod to the \textit{stable module category} $StMod(R)$ by identifying any two morphisms whose difference factors through a projective module. 

Section 2.2 of \cite{hovey-book} introduces a model category $\M$ of $R$-modules whose homotopy category is $StMod(R)$. Furthermore, a series of propositions in Section 2.2 demonstrate that $\M$ is a finitely generated, combinatorial, stable model category in which all objects are cofibrant (hence, $\M$ is also left proper). Proposition 4.2.15 of \cite{hovey-book} proves that for $R=\F_2[\Sigma_3]$, this model category is a monoidal model category because $R$ is a Hopf algebra over $\F_2$. The monoidal product of two $R$-modules is $M\otimes_{\F_2}N$ where $R$ acts via its diagonal $R \to R\otimes_{\F_2}R$. 

We now check that cofibrant objects are flat in $\M$. By the pushout product axiom, $X\otimes -$ is left Quillen. Since all objects are cofibrant, all weak equivalences are weak equivalences between cofibrant objects. So Ken Brown's lemma implies $X\otimes -$ preserves weak equivalences.

Let $f:0\to \F_2$, where the codomain has the trivial $\Sigma_3$ action. We'll show that the Bousfield localization with respect to $f$ cannot be a monoidal Bousfield localization. First observe that being $f$-locally trivial is equivalent to having no $\Sigma_3$-fixed points, and this is equivalent to failing to admit $\Sigma_3$-equivariant maps from $\F_2$ (the non-identity element would need to be taken to a $\Sigma_3$-fixed point because the $\Sigma_3$-action on $\F_2$ is trivial).

If the pushout product axiom held in $L_f(\M)$ then the pushout product of two $f$-locally trivial cofibrations $g,h$ would have to be $f$-locally trivial. We will now demonstrate an $f$-locally trivial object $N$ for which $N\otimes_{\F_2} N$ is not $f$-locally trivial, so $(\emptyset \to N) \boxprod (\emptyset \to N)$ is not a trivial cofibration in $L_f(\M)$.

Define $N \cong \F_2 \oplus \F_2$ where the element $(12)$ sends $a=(1,0)$ to $b=(0,1)$ and the element $(123)$ sends $a$ to $b$ and $b$ to $c=a+b$. The reader can check that $(12)(123)$ acts the same as $(123)^2(12)$, so that this is a well-defined $\Sigma_3$-action. This object $N$ is $f$-locally trivial. It does not admit any maps from $\F_2$ because it has no $\Sigma_3$-fixed points. However, $N\otimes_{\F_2} N$ is not $f$-locally trivial because $N\otimes_{\F_2} N$ does admit any map from $\F_2$ taking the non-identity element of $\F_2$ to the $\Sigma_3$-invariant element $a\otimes a + b\otimes b + c\otimes c$. Thus, $L_f(\M)$ is not a monoidal model category.

\end{example}

In order to get around examples such as the above we must place hypotheses on the maps $\C$ that we are inverting. A similar program was conducted in \cite{CGMV}, where localizations of stable model categories were assumed to commute with suspension. Similarly, a condition on a stable localization to ensure that it is additionally monoidal was given in Definition 5.2 of \cite{Barnes12LeftAndRight} and the same condition appeared in Theorem 4.46 of \cite{barwickSemi}. This condition states that $\C \boxprod I$ is contained in the $\C$-local equivalences.

\begin{remark}
The counterexample above fails to satisfy the condition that $\C \boxprod I$ is contained in the $\C$-local equivalences. If this condition were satisfied then $I$ would be contained in the $f$-local equivalences and this would imply all cofibrant objects (hence all objects) are $f$-locally trivial. But $0\to N\otimes_{\F_2} N$ is not $f$-locally trivial. Thus, this counterexample has no bearing on the work of \cite{Barnes12LeftAndRight} or \cite{barwickSemi}.
\end{remark}

\begin{remark}
The counterexample demonstrates a general principle that we now highlight. In any $G$-equivariant world, there are multiple spheres due to the different group actions. In the example above, one can suspend by representations of $\Sigma_n$, i.e. copies of $\F_2$ on which $\Sigma_n$ acts. The 1-point compactification of such an object is a sphere $S^n$ on which $\Sigma_n$ acts. A localization that kills a representation sphere should not be expected to respect the monoidal structure, because not all acyclic cofibrant objects can be built from one of the representation spheres alone. In particular, $N\otimes N$ will not be in the smallest thick subcategory generated by $\F_2$. The point is that the homotopy categories of stable model categories in an equivariant context are not monogenic axiomatic stable homotopy categories in the sense of \cite{hovey-palmieri-strickland}.

Note that this example also demonstrates that the monoid axiom can fail on $L_\C(\M)$. The author does not know an example of a model category satisfying the pushout product axiom but failing the monoid axiom.
\end{remark}

In our applications we will need to know that $L_\C(\M)$ satisfies the pushout product axiom, the unit axiom, and the axiom that cofibrant objects are flat. We therefore give a name to such localizations, and then we characterize them. The reader is advised to keep Convention \ref{convention:C-all-cofibrations} in mind.

\begin{defn} \label{defn-monoidal-localization}
A Bousfield localization $L_\C$ is said to be a \textit{monoidal Bousfield localization} if $L_\C(\M)$ satisfies the pushout product axiom, the unit axiom, and the axiom that cofibrant objects are flat.
\end{defn}

\begin{theorem} \label{thm:PPAxiom}
Suppose that $\M$ is a cofibrantly generated monoidal model category in which cofibrant objects are flat and the domains of the generating cofibrations are cofibrant. Let $I$ denote the generating cofibrations of $\M$. Then $L_\C$ is a monoidal Bousfield localization if and only if every map of the form $f \otimes id_K$, where $f$ is in $\C$ and $K$ is a domain or codomain of a map in $I$, is a $\C$-local equivalence.
\end{theorem}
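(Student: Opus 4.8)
The plan is to prove both implications by reducing the pushout product axiom for $L_\C(\M)$ to a statement about generating maps, and then feeding in the hypothesis that cofibrant objects are flat in $\M$. First I would recall that by Proposition 4.2.5 of \cite{hovey-book} it suffices to check the pushout product axiom on the generating (trivial) cofibrations. Since the cofibrations of $L_\C(\M)$ coincide with those of $\M$, the generating cofibrations are unchanged, and $f \boxprod g$ is a cofibration for $f,g \in I$ automatically. So the only thing to verify is the trivial-cofibration half: if $f$ is a generating cofibration and $g$ is a generating trivial cofibration of $L_\C(\M)$, then $f \boxprod g$ is a $\C$-local equivalence (and a cofibration, which is free). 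A convenient set of generating trivial cofibrations for $L_\C(\M)$ is $J_\C := J \cup \Lambda(\tilde\C)$, the union of the old generating trivial cofibrations with (the images under the relevant horn-type construction of) the maps in $\C$; see the treatment in \cite{hirschhorn}. For $g \in J$ the map $f \boxprod g$ is already a trivial cofibration in $\M$, hence a $\C$-local equivalence, so the content is entirely in the maps $g$ coming from $\C$ itself.

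For the ``if'' direction, assume $f \otimes \mathrm{id}_K$ is a $\C$-local equivalence for every $f \in \C$ and every domain/codomain $K$ of a map in $I$. I would first upgrade this: since $\C$-local equivalences between cofibrant objects are closed under the relevant colimit constructions (pushouts along cofibrations by left properness, transfinite compositions, retracts) and since every cofibrant object is built from the domains and codomains of $I$, a standard induction over cell structure shows that $f \otimes \mathrm{id}_X$ is a $\C$-local equivalence for every $f \in \C$ and every cofibrant $X$ — this is where flatness of cofibrant objects in $\M$ is used to handle the weak equivalences appearing when one replaces a general cofibrant object's building blocks, and to pass the monoidal product through the relevant factorizations. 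Next, given the generating trivial cofibration $g \in \C$ (a cofibration between cofibrant objects) and $f \in I$ (a cofibration between cofibrant objects, by tractability), I want $f \boxprod g$ to be a $\C$-local equivalence. The pushout corner map $f \boxprod g$ factors as a composite in which one leg is a pushout of $g \otimes \mathrm{id}_{(\text{source of } f)}$ along a cofibration and the cofiber comparison is controlled by $g \otimes \mathrm{id}_{(\text{target of } f)}$; since both of those are $\C$-local equivalences between cofibrant objects by the previous step, left properness and the 2-out-of-3 property give that $f \boxprod g$ is a $\C$-local equivalence. This establishes the pushout product axiom for $L_\C(\M)$. The unit axiom is inherited from $\M$ since it only involves cofibrant objects and $\M$-weak equivalences, which are $\C$-local equivalences; and ``cofibrant objects are flat'' in $L_\C(\M)$ follows because a $\C$-local equivalence tensored with a cofibrant object is again a $\C$-local equivalence — again by the cell induction above, now applied with $\C$-local equivalences in place of maps of $\C$, using that cofibrant objects are flat in $\M$ to reduce an arbitrary $\C$-local equivalence to one between cofibrant objects via a functorial cofibrant replacement.

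For the ``only if'' direction, suppose $L_\C$ is a monoidal Bousfield localization. Each $f \in \C$ is a cofibration between cofibrant objects and is a $\C$-local equivalence, i.e. a trivial cofibration in $L_\C(\M)$. Each $K$ as in the statement is a cofibrant object of $\M$, hence of $L_\C(\M)$. Since $L_\C(\M)$ satisfies the pushout product axiom, tensoring a trivial cofibration with a cofibrant object yields a trivial cofibration: indeed $f \otimes \mathrm{id}_K$ is the pushout product $f \boxprod (\emptyset \to K)$ (the pushout corner reduces to $f \otimes \mathrm{id}_K$ when one factor has initial source), and since $\emptyset \to K$ is a cofibration and $f$ is a trivial cofibration in $L_\C(\M)$, the pushout product axiom makes $f \otimes \mathrm{id}_K$ a trivial cofibration, in particular a $\C$-local equivalence. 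This is the claimed property, so we are done.

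The main obstacle I expect is the cell-induction bootstrap in the ``if'' direction: getting from ``$f \otimes \mathrm{id}_K$ is a $\C$-local equivalence for $K$ a generating domain/codomain'' to ``$f \otimes \mathrm{id}_X$ is a $\C$-local equivalence for all cofibrant $X$'' and then to ``$(\text{$\C$-local equiv.})\otimes(\text{cofibrant})$ is a $\C$-local equivalence.'' Each step requires care about whether the objects in sight are cofibrant (so that $\C$-local equivalences among them are detected correctly and left properness applies), and the flatness hypothesis on $\M$ is exactly what lets one slide the tensor product past cofibrant replacements and past the weak equivalences that appear when analyzing pushouts of cells; keeping track of cofibrancy through the pushout-product factorization of $f \boxprod g$ is the delicate bookkeeping.
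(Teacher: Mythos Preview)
There is a genuine gap in the ``if'' direction. You reduce the pushout product axiom to checking $f \boxprod g$ for $f \in I$ and $g$ a generating trivial cofibration of $L_\C(\M)$, and then treat ``the maps $g$ coming from $\C$ itself'' as the relevant case. But maps in $\C$ do not generate the $L_\C$-trivial cofibrations: Hirschhorn's set $J_\C$ involves horn constructions on cosimplicial resolutions of maps in $\C$, and even $J \cup \Lambda(\C)$ is only known to suffice under extra hypotheses (stable, proper; see the discussion around Theorem 3.11 of \cite{Barnes12LeftAndRight}). More fundamentally, your bootstrap to flatness in $L_\C(\M)$---``cell induction, now applied with $\C$-local equivalences in place of maps of $\C$''---has no base case. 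The induction is on the cell structure of the cofibrant object $X$, and its base case requires that $g \otimes K$ be a $\C$-local equivalence for $K$ a generating (co)domain and $g$ an \emph{arbitrary} $\C$-local equivalence between cofibrants, not merely $g \in \C$. This passage from a hypothesis on $\C$ to a conclusion about all $\C$-local equivalences is exactly the crux, and your sketch does not supply it.

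The paper supplies this bridge via the universal property of Bousfield localization (Theorem 3.3.18 in \cite{hirschhorn}): since $K$ is cofibrant (tractability), $K \otimes -:\M \to \M$ is left Quillen and, by hypothesis, sends $\C$ into the $\C$-local equivalences; hence it descends to a left Quillen functor $L_\C(\M) \to L_\C(\M)$ and therefore preserves all $\C$-local equivalences between cofibrant objects. That is the missing base case, and with it the cell induction (using the Cube Lemma at successor stages) yields flatness of cofibrant objects in $L_\C(\M)$. The paper then sidesteps the $J_\C$ problem entirely: rather than enumerating generating trivial cofibrations of $L_\C(\M)$, one checks $h \boxprod g$ for an \emph{arbitrary} $L_\C$-trivial cofibration $h$ against $g \in I$, using flatness to make $K \otimes h$ and $L \otimes h$ trivial cofibrations in $L_\C(\M)$ and then running the pushout-diagram argument you already wrote down. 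Your ``only if'' direction and the unit axiom are fine.
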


\begin{theorem} \label{thm:PPAxiom-nontractable}
Suppose $\M$ is a cofibrantly generated monoidal model category in which cofibrant objects are flat. Then $L_\C$ is a monoidal Bousfield localization if and only if every map of the form $f \otimes id_K$, where $f$ is in $\C$ and $K$ is cofibrant, is a $\C$-local equivalence.
\end{theorem}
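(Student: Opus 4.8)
The plan is to deduce Theorem \ref{thm:PPAxiom-nontractable} by the same strategy used for the tractable case (Theorem \ref{thm:PPAxiom}), but without appealing to cofibrancy of the domains and codomains of $I$. The forward direction is immediate: if $L_\C$ is a monoidal Bousfield localization, then for $f$ in $\C$ and $K$ cofibrant, the map $\emptyset \to K$ is a cofibration in $L_\C(\M)$, and $f$ is a $\C$-local equivalence between cofibrant objects; since cofibrant objects are flat in $L_\C(\M)$ (this being part of the definition of monoidal Bousfield localization, and note that the cofibrations of $L_\C(\M)$ agree with those of $\M$, so ``cofibrant'' is unambiguous), tensoring the weak equivalence $f$ with $K$ yields a $\C$-local equivalence $f \otimes id_K$.

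For the reverse direction, assume every $f \otimes id_K$ with $f \in \C$ and $K$ cofibrant is a $\C$-local equivalence. First I would show the pushout product axiom holds in $L_\C(\M)$. Since cofibrations in $L_\C(\M)$ coincide with those in $\M$ and the pushout product of two cofibrations is already a cofibration in $\M$, only the statement about trivial cofibrations needs checking. By Proposition 4.2.5 of \cite{hovey-book} it suffices to verify the pushout product axiom on generating sets; one must show that for a generating cofibration $i \in I$ and a generating trivial cofibration $j$ of $L_\C(\M)$, the map $i \boxprod j$ is a trivial cofibration in $L_\C(\M)$. The generating trivial cofibrations of $L_\C(\M)$ (in Hirschhorn's construction) are, up to the usual horn inclusions from $J$, built from the maps in $\C$; more precisely, the $\C$-local trivial cofibrations are generated by $J$ together with pushout products of maps in $\C$ with generating cofibrations. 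The key reduction is therefore: it is enough to know that $f \boxprod i$ is a $\C$-local equivalence for each $f \in \C$ and $i \in I$. Now $f \boxprod i$ fits into a diagram in which the two relevant summands $\dom(f)\otimes \codom(i)$ and $\codom(f)\otimes\dom(i)$, and the pushout corner, are all related by maps of the form $f \otimes id_{\dom i}$ and $f \otimes id_{\codom i}$; since $\dom(i)$ and $\codom(i)$ need not be cofibrant, I cannot invoke the hypothesis directly on them. This is where the flatness hypothesis on $\M$ and the cofibrancy of the domains of $\C$ enter: replace $\dom(i) \to \codom(i)$ by a cofibration between cofibrant objects $i' \colon K_0' \to K_1'$ together with a weak equivalence $i' \to i$ in the arrow category (a cofibrant replacement of the arrow), and use that tensoring with the cofibrant objects $\dom(f)$ and $\codom(f)$ (the maps in $\C$ being cofibrations between cofibrant objects) preserves this weak equivalence by flatness of $\M$; this reduces $f \boxprod i$ to $f \boxprod i'$ up to $\M$-weak equivalence, and $f \boxprod i'$ is handled by the hypothesis applied to the cofibrant objects $K_0', K_1'$ exactly as in the proof of Theorem \ref{thm:PPAxiom}. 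Then a standard argument (the pushout-product/2-out-of-3 manipulation, together with left properness to identify the pushout) upgrades ``$f \otimes id_{K_0'}$ and $f \otimes id_{K_1'}$ are $\C$-local equivalences'' to ``$f \boxprod i'$ is a $\C$-local equivalence''.

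Once the pushout product axiom is established for $L_\C(\M)$, the remaining two conditions are quick. The unit axiom for $L_\C(\M)$ follows from the unit axiom for $\M$: the map $QS \otimes X \to X$ for $X$ cofibrant is already an $\M$-weak equivalence, hence a $\C$-local equivalence, and nothing about the localization changes it. For flatness of cofibrant objects in $L_\C(\M)$, I would argue as follows: let $X$ be cofibrant and let $g$ be a $\C$-local equivalence; factor and reduce to the case $g$ a $\C$-local trivial cofibration (using the pushout product axiom just proved, plus Ken Brown's lemma / the fact that $X \otimes -$ is left Quillen on $L_\C(\M)$ once the pushout product axiom holds); then $X \otimes g$ is a $\C$-local trivial cofibration, in particular a $\C$-local equivalence. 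Thus all three conditions in Definition \ref{defn-monoidal-localization} hold and $L_\C$ is a monoidal Bousfield localization.

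\textbf{Main obstacle.} The delicate point is the reduction from arbitrary generating cofibrations $i \in I$ (whose domains and codomains are not assumed cofibrant) to cofibrations between cofibrant objects, so that the hypothesis ``$f \otimes id_K$ is a $\C$-local equivalence for $K$ cofibrant'' can actually be applied. Making this precise requires the flatness of cofibrant objects in $\M$ to push the comparison weak equivalence through the tensor, and the standing assumption that the maps of $\C$ are cofibrations between cofibrant objects, and one must be careful that the pushout-corner object in the pushout product is computed correctly up to weak equivalence — this is exactly where left properness of $\M$ is used. Everything else is a routine transcription of the tractable-case argument with ``domain or codomain of a map in $I$'' replaced by ``arbitrary cofibrant object'' and the cofibrant-replacement-of-arrows step inserted.
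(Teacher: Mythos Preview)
Your forward direction and the unit axiom are fine, and the cofibrant-replacement-of-$i$ maneuver is a reasonable idea. But the heart of your reverse direction has a genuine gap, and the ordering of your argument is backwards relative to what is actually needed.

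The gap is your ``key reduction'': you assert that to verify the pushout product axiom on $L_\C(\M)$ it suffices to check that $f \boxprod i$ is a $\C$-local equivalence for $f \in \C$ and $i \in I$. But Proposition 4.2.5 of \cite{hovey-book} requires you to check $i \boxprod j$ for $j$ running over a set of \emph{generating trivial cofibrations} $J_\C$ of $L_\C(\M)$, and $J_\C$ is not $\C \cup J$. In Hirschhorn's construction $J_\C = J \cup \tilde{\Lambda(\C)}$, where the horns $\tilde{\Lambda(\C)}$ are built from cosimplicial resolutions of the maps in $\C$; your description ``pushout products of maps in $\C$ with generating cofibrations'' is not accurate, and you give no argument reducing the horn case to the case $j = f \in \C$. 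Even if you try to handle $j \in \tilde{\Lambda(\C)}$ directly by your replacement trick, the step ``$K_0' \otimes j$ and $K_1' \otimes j$ are $\C$-local equivalences'' now requires knowing that cofibrant objects are flat in $L_\C(\M)$, which you only prove at the end \emph{using} the pushout product axiom --- a circularity.

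The paper avoids both problems by reversing your order and by a different device on the cofibration side. First it proves cofibrant objects are flat in $L_\C(\M)$ directly from the hypothesis, via the universal property of localization (Lemma \ref{lemma:left-Quillen-and-loc}): for cofibrant $X$ the functor $X \otimes -$ is left Quillen on $\M$ and sends each $f \in \C$ to a $\C$-local equivalence, hence descends to a left Quillen functor on $L_\C(\M)$. This handles \emph{all} $\C$-local trivial cofibrations $h$ at once, so one never has to identify $J_\C$ explicitly. Second, rather than cofibrantly replacing each $i \in I$ in the arrow category, the paper replaces the entire generating set: Lemma \ref{lemma:I'UJ} shows (using left properness and Hirschhorn's Proposition 13.2.1) that $I' \cup J$ is again a generating set of cofibrations, where $I'$ consists of cofibrations between cofibrant objects. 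Then for $g \in I'$ the tractable-case argument (Proposition \ref{prop:helper-for-monoidal-loc}) applies verbatim using flatness just established, and for $g \in J$ one simply invokes the pushout product axiom on $\M$. Your replacement-of-one-arrow idea is in the right spirit, but the paper's replacement-of-the-generating-set, combined with proving flatness first, is what makes the argument close.
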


Note that the condition $\C \boxprod I \subset \C$-local equivalences, from \cite{Barnes12LeftAndRight, barwickSemi}, implies the condition from these theorems. In fact, one can prove it is equivalent to $L_\C(\M)$ being a monoidal model category, because $\C$ can be taken to be a set of $\C$-local trivial cofibrations. However, the condition stated in the theorems above is easier to check. We shall prove Theorem \ref{thm:PPAxiom} in Subsection \ref{subsec:proof-tractable} and we shall prove Theorem \ref{thm:PPAxiom-nontractable} in Subsection \ref{subsec:proof-nontractable}. These theorems demonstrate precisely what must be done if one wishes to invert a given set of morphisms $\C$ and ensure that the resulting model structure is a monoidal model structure. 

\begin{defn}
Suppose $\M$ is left proper, is either cellular or combinatorial, and that the domains of the generating cofibrations are cofibrant. The \textit{smallest monoidal Bousfield localization} which inverts a given set of morphisms $\C$ is the Bousfield localization with respect to the set $\C' = \{\C \otimes id_K\}$ where $K$ runs through the domains and codomains of the generating cofibrations $I$.
\end{defn}

This notion has already been used in \cite{hovey-white}. The reason for the hypothesis on the domains of the generating cofibrations is to ensure that $\C'$ is a set. Requiring left properness and either cellularity or combinatoriality ensures that $L_{\C'}$ exists. The smallest Bousfield localization has a universal property, that we now highlight.

\begin{proposition}
Suppose $\C'$ is the smallest monoidal Bousfield localization inverting $\C$, and let $j:\M \to L_{\C'}(\M)$ be the left Quillen functor realizing the localization. Suppose $\cat{N}$ is a monoidal model category with cofibrant objects flat. Suppose $F:\M \to \cat{N}$ is a monoidal left Quillen functor such that $\mathbb{L}F$ takes the images of $\C$ in $\ho(\M)$ to isomorphisms in $\ho(\cat{N})$. Then there is a unique monoidal left Quillen functor $\delta:L_{\C'}\M \to \cat{N}$ such that $\delta j = F$.
\end{proposition}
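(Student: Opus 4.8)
The plan is to reduce the proposition to the ordinary universal property of left Bousfield localization (see \cite{hirschhorn}, Theorem 3.3.20): a left Quillen functor out of $\M$ descends to a left Quillen functor out of $L_{\mathcal{D}}(\M)$ if and only if its total left derived functor sends every map in $\mathcal{D}$ to an isomorphism in the target homotopy category. Since the localization functor $j\colon \M \to L_{\C'}(\M)$ is the identity on underlying categories and is strong monoidal (the product $\otimes$ and unit $S$ are unchanged), any monoidal left Quillen functor $\delta$ with $\delta j = F$ must agree with $F$ as a monoidal functor of underlying categories; this settles uniqueness at once and reduces existence to two checks: (i) $F$, regarded as a functor $L_{\C'}(\M) \to \cat{N}$, is still left Quillen, and (ii) it is still a monoidal left Quillen functor. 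Point (ii) will be automatic: the underlying functor $F$, its monoidal coherence maps, and the monoidal unit and product on both source and target are all unchanged by the localization, and only the class of weak equivalences changes, so $F$ remains monoidal and, by (i), remains a monoidal left Quillen functor. Thus the entire content is point (i), which by the universal property amounts to showing that $\mathbb{L}F$ inverts not merely $\C$ but the larger set $\C' = \{f \otimes id_K : f \in \C,\ K \text{ a domain or codomain of a map in } I\}$.

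To establish that, I would fix $f\colon X_0 \to X_1$ in $\C$ and $K$ a domain or codomain of a map in $I$. By tractability of $\M$, $K$ is cofibrant; by our standing convention $f$ is a cofibration between cofibrant objects; hence $f \otimes id_K = f \boxprod (\emptyset \to K)$ is, by the pushout product axiom in $\M$, a cofibration, with cofibrant domain $X_0 \otimes K$ and cofibrant codomain $X_1 \otimes K$. Consequently $\mathbb{L}F(f \otimes id_K)$ is represented by $F(f \otimes id_K)$ on the nose, which the monoidal structure of $F$ identifies with $F(f) \otimes id_{F(K)}$. Now $\mathbb{L}F(f) = F(f)$ as well (again $f$ is a map between cofibrant objects), so by hypothesis $F(f)$ is an isomorphism in $\ho(\cat{N})$; a map between cofibrant objects that is invertible in the homotopy category is a weak equivalence (pass to fibrant replacements, invoke Whitehead's theorem, then use the 2 out of 3 property). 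Since $F$ is left Quillen, $F(K)$ is cofibrant in $\cat{N}$, and since cofibrant objects are flat in $\cat{N}$, tensoring the weak equivalence $F(f)$ with $F(K)$ yields a weak equivalence. Hence $F(f \otimes id_K)$ is a weak equivalence in $\cat{N}$, so $\mathbb{L}F$ inverts every element of $\C'$.

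Applying the universal property of $L_{\C'}(\M)$ then produces the desired left Quillen functor $\delta\colon L_{\C'}(\M) \to \cat{N}$ with underlying functor $F$; it is monoidal for the reason recorded above, and $\delta j = F$ holds on the nose, while uniqueness was already noted. I expect the only step requiring real care to be the passage, inside the second paragraph, from ``$\mathbb{L}F(f)$ is an isomorphism in $\ho(\cat{N})$'' to ``$F(f) \otimes id_{F(K)}$ is a weak equivalence'': this is precisely where the hypothesis that cofibrant objects are flat in $\cat{N}$ is consumed, and it is exactly what forces us to invert the strictly larger set $\C'$ rather than only $\C$ in order to factor $F$ through the localization — matching the definition of the \emph{smallest} monoidal Bousfield localization.
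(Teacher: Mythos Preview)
Your proposal is correct and follows essentially the same route as the paper: both arguments reduce to checking that $\mathbb{L}F$ inverts every map $f\otimes id_K$ in $\C'$ by writing $F(f\otimes K)\cong F(f)\otimes F(K)$ and invoking flatness of the cofibrant object $F(K)$ in $\cat{N}$, then applying the universal property of $L_{\C'}$ (the paper cites Hirschhorn 3.3.18--3.3.19 where you cite 3.3.20). Your justification that $F(f)$ is an actual weak equivalence (not merely an isomorphism in $\ho(\cat{N})$) is a detail the paper leaves implicit, and conversely the paper records the monoidal-unit check $\delta q = Fq$ explicitly; neither difference is substantive.
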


\begin{proof}
Suppose $F:\M \to \cat{N}$ is a monoidal left Quillen functor, that $\cat{N}$ has cofibrant objects flat, and that $\mathbb{L}F$ takes the images of $\C$ in $\ho(\M)$ to isomorphisms in $\ho(\cat{N})$. Then $F$ also takes the images of maps in $\C'$ to isomorphisms in $\ho(\cat{N})$, because for any $f\in \C$ and any cofibrant $K$, $F(f\otimes K) \cong F(f)\otimes F(K)$ is a weak equivalence in $\cat{N}$. This is because $F(K)$ is cofibrant in $\cat{N}$ (as $F$ is left Quillen), cofibrant objects are flat in $\cat{N}$, and $F(f)$ is a weak equivalence in $\cat{N}$ by hypothesis.

The universal property of the localization $L_{\C'}$ then provides a unique left Quillen functor $\delta:L_{\C'}\M \to \cat{N}$ that is the same as $F$ on objects and morphisms (Theorem 3.3.18 and Theorem 3.3.19 in \cite{hirschhorn}). In particular, $\delta$ is a monoidal functor and $\delta q = Fq:F(QS)\to F(S)$ is a weak equivalence in $\cat{N}$ because the cofibrant replacement $QS\to S$ is the same in $L_{\C'}(\M)$ as in $\M$. So $\delta$ is a unique monoidal left Quillen functor as required, and the commutativity $\delta j = F$ follows immediately from the definition of $\delta$.
\end{proof}

\subsection{Proof of Theorem \ref{thm:PPAxiom}} \label{subsec:proof-tractable}

In this section we will prove Theorem \ref{thm:PPAxiom}. We first prove that under the hypotheses of Theorem \ref{thm:PPAxiom}, cofibrant objects are flat in $L_\C(\M)$.

\begin{proposition} \label{cofObjFlat}
Let $\M$ be a cofibrantly generated monoidal model category in which cofibrant objects are flat and the domains of the generating cofibrations are cofibrant. Let $I$ denote the generating cofibrations of $\M$. Suppose that every map of the form $f \otimes id_K$, where $f$ is in $\C$ and $K$ is a domain or codomain of a map in $I$, is a $\C$-local equivalence. Then cofibrant objects are flat in $L_\C(\M)$.
\end{proposition}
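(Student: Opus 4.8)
The plan is to reduce the assertion "cofibrant objects are flat in $L_\C(\M)$" to a statement about generating cofibrations and then leverage the hypothesis on $\C \otimes id_K$. Since cofibrations in $L_\C(\M)$ coincide with those in $\M$, a cofibrant object $X$ in $L_\C(\M)$ is exactly a cofibrant object in $\M$, and by tractability it is built as a transfinite composite of pushouts of maps $i \otimes$ (nothing), i.e. $\emptyset \to X$ lies in $I$-cell. We must show that for any $\C$-local equivalence $g$, the map $g \otimes id_X$ is again a $\C$-local equivalence. First I would recall that $\C$-local equivalences are detected by mapping into $\C$-local (i.e. $L_\C(\M)$-fibrant) objects $N$: $g$ is a $\C$-local equivalence iff $\map(g, N)$ is a weak equivalence in $sSet$ for all such $N$. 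By adjunction $\map(g \otimes id_X, N) \cong \map(g, \underline{\hom}(X, N))$ where $\underline{\hom}$ is the internal hom (or a framed/derived version thereof), so it would suffice to know that $\underline{\hom}(X, N)$ is $\C$-local whenever $N$ is. Equivalently, by the definition of $\C$-locality, one needs $\map(f, \underline{\hom}(X, N)) \cong \map(f \otimes id_X, N)$ to be a weak equivalence for every $f \in \C$ — i.e. one needs $f \otimes id_X$ to be a $\C$-local equivalence for every cofibrant $X$, for every $f \in \C$.

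So the core reduces to: the hypothesis gives $f \otimes id_K$ a $\C$-local equivalence when $K$ is a domain or codomain of a map in $I$; I would bootstrap this to all cofibrant $K$. Fix $f \in \C$; since $f$ is a cofibration between cofibrant objects and cofibrant objects are flat in $\M$, the functor $f \otimes -$ sends cofibrations to cofibrations and weak equivalences between cofibrant objects to weak equivalences. Now build $X$ via its cellular filtration $\emptyset = X_0 \to X_1 \to \cdots \to \colim X_\beta = X$, each stage a pushout of some $K^{s}_0 \to K^{s}_1$ in $I$. Tensoring the whole filtration with $f$ (more precisely, applying the pushout-product machinery), one gets a corresponding filtration computing $f \otimes id_X$ whose successive stages are pushouts, along cofibrations, of the maps $f \otimes id_{K^s_1}$ built from $f \otimes id_{K^s_0}$ and $f \otimes id_{K^s_1}$ — here is where the hypothesis feeds in: each $f \otimes id_{K^s_i}$ is a $\C$-local equivalence. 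Using left properness of $L_\C(\M)$ (which is inherited since $\M$ is left proper and cofibrations and a fortiori the weak equivalences of $L_\C$ behave well — more precisely $L_\C(\M)$ is left proper because cofibrations are unchanged and $\C$-local equivalences are closed under the relevant pushouts), pushouts of $\C$-local equivalences along cofibrations are $\C$-local equivalences, and transfinite composites of $\C$-local equivalences that are also cofibrations are $\C$-local equivalences. Hence $f \otimes id_X$ is a $\C$-local equivalence for all cofibrant $X$ and all $f \in \C$. Tracking the bookkeeping, I would phrase the inductive step as: the map from the $f$-tensor of the pushout square to the pushout of the $f$-tensored square is an iso, and the comparison of the new piece is a pushout of $f \otimes id_{K^s_0} \to f \otimes id_{K^s_1}$, which by two-out-of-three and the hypothesis is a $\C$-local equivalence onto its image.

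Having established that $\underline{\hom}(X, N)$ is $\C$-local for every $L_\C(\M)$-fibrant $N$ and every cofibrant $X$, I would conclude: for $g$ a $\C$-local equivalence, $\map(g \otimes id_X, N) \cong \map(g, \underline{\hom}(X,N))$ is a weak equivalence in $sSet$ for every such $N$, hence $g \otimes id_X$ is a $\C$-local equivalence, i.e. a weak equivalence in $L_\C(\M)$ — which is precisely the statement that cofibrant objects are flat in $L_\C(\M)$. The main obstacle I anticipate is the homotopical bookkeeping in the cellular induction: one must be careful that tensoring with $f$ commutes appropriately with the transfinite colimit and the pushouts (this uses that $\otimes$ preserves colimits in each variable and the pushout-product structure), and that "flatness in $\M$ of cofibrant objects" is genuinely needed to know the relevant comparison maps in the filtration are weak equivalences in $\M$ (and hence, being between cofibrant objects, that their images behave well). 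A secondary subtlety is the framing/internal-hom adjunction: if $\M$ is not simplicial one works with the derived mapping space built from a cosimplicial/simplicial framing, and one should note that the adjunction $\map(A \otimes X, N) \simeq \map(A, \underline{\hom}(X,N))$ holds in the derived sense precisely because $X$ is cofibrant and $N$ is fibrant, which is exactly our setting; I would cite Hirschhorn for the framed mapping space compatibilities rather than reprove them.
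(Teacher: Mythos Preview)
Your approach is correct and takes a genuinely different route from the paper. The paper fixes a $\C$-local trivial cofibration $g$ between cofibrant objects (after the same cofibrant-replacement reduction you gesture at) and runs the cellular induction on $X$ directly for this $g$; the base case $X=K$ is handled by invoking the universal property of Bousfield localization (Hirschhorn's Theorem 3.3.18): since $K\otimes -$ is left Quillen and sends $\C$ to $\C$-local equivalences by hypothesis, it descends to a left Quillen functor on $L_\C(\M)$, hence preserves $\C$-local equivalences between cofibrant objects. You instead use the internal-hom adjunction $\map(g\otimes X,N)\simeq \map(g,\underline{\hom}(X,N))$ to reduce everything to showing $f\otimes id_X$ is a $\C$-local equivalence for $f\in\C$ only, making the base case of your induction immediate. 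These two moves are really dual: your adjunction argument is essentially the content of Hirschhorn 3.3.18 unpacked for the functor $-\otimes X$. Your route buys a trivial base case at the cost of the framing/adjunction bookkeeping you flag at the end; the paper's route hides that bookkeeping inside the citation.

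One point to tighten: your description of the successor step is garbled. The phrase ``the comparison of the new piece is a pushout of $f\otimes id_{K_0}\to f\otimes id_{K_1}$'' does not parse into a correct argument, and left properness alone does not give what you need. What you actually have is a map of pushout squares (both along cofibrations, all objects cofibrant) with three of the four comparison maps $f\otimes K_0$, $f\otimes K_1$, $f\otimes X_\alpha$ already known to be $\C$-local equivalences; the conclusion that $f\otimes X_{\alpha+1}$ is one too is exactly the Cube Lemma (Hovey, Lemma 5.2.6), which is what the paper invokes. You should name it. A secondary minor gap: to apply your adjunction for a general $\C$-local equivalence $g$ (not necessarily between cofibrant objects), you implicitly need that $Q(A\otimes X)\simeq QA\otimes X$ for cofibrant $X$, which is where the hypothesis that cofibrant objects are flat in $\M$ enters your argument; you should say this rather than only allude to it.
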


\begin{proof}

We must prove that the class of maps $\{g\otimes X \;|\; g$ is a $\C$-local equivalence and $X$ is a cofibrant object$\}$ is contained in the $\C$-local equivalences. Let $X$ be a cofibrant object in $L_\C(\M)$ (equivalently, in $\M$). Let $g:A\to B$ be a $\C$-local equivalence. To prove $-\otimes X$ preserves $\C$-local equivalences, it suffices to show that it takes $L_\C(\M)$ trivial cofibrations between cofibrant objects to weak equivalences. This is because we can always do cofibrant replacement on $g$ to get $Qg:QA\to QB$. While $Qg$ need not be a cofibration in general, we can always factor it into $QA\hookrightarrow Z \stackrel{\simeq}{\twoheadrightarrow} QB$. By abuse of notation we will continue to use the symbol $QB$ to denote $Z$, and we will rename the cofibration $QA\to Z$ as $Qg$ since $Z$ is cofibrant and maps via a trivial fibration to $B$. Smashing with $X$ gives:
\begin{align*}
\xymatrix{QA\otimes X \ar[r] \ar[d] & QB \otimes X \ar[d] \\ A\otimes X \ar[r] & B\otimes X}
\end{align*}

If we prove that $Qg\otimes X$ is a $\C$-local equivalence, then $g\otimes X$ must also be by the two out of three property, since the vertical maps are weak equivalences in $\M$ due to $X$ being cofibrant and cofibrant objects being flat in $M$. So we may assume that $g$ is an $L_\C(\M)$ trivial cofibration between cofibrant objects. Since $X$ is built as a transfinite composition of pushouts of maps in $I$, we proceed by transfinite induction. For the rest of the proof, let $K, K_1,$ and $K_2$ denote domains/codomains of maps in $I$. These objects are cofibrant in $\M$ by hypothesis, so they are also cofibrant in $L_\C(\M)$.

For the base case $X=K$ we appeal to Theorem 3.3.18 in \cite{hirschhorn}. The composition $F = id \circ K\otimes -: \M \to \M \to L_\C(\M)$ is left Quillen because $K$ is cofibrant. $F$ takes maps in $\C$ to weak equivalences by hypothesis. So Theorem 3.3.18 implies $F$ induces a left Quillen functor $K\otimes -:L_\C(\M)\to L_\C(\M)$. Thus, $K\otimes -$ takes $\C$-local equivalences between cofibrant objects to $\C$-local equivalences and in particular takes $Qg$ to a $\C$-local equivalence. Note that this is the key place in this proof where we use the hypothesis that $L_\C$ is a monoidal Bousfield localization. This theorem is the primary tool when one wishes to get from a statement about $\C$ to a statement about all $\C$-local equivalences.

For the successor case, suppose $X_{\alpha}$ is built from $K$ as above and is flat in $L_\C(\M)$. Suppose $X_{\alpha+1}$ is built from $X_\alpha$ and a map in $I$ via a pushout diagram:
\begin{align*}
\xymatrix{K_1\ar@{^{(}->}[r]^i \ar[d] \ar@{}[dr]|(.7){\Searrow} & K_2 \ar[d] \\ X_\alpha \ar[r] & X_{\alpha+1}}
\end{align*}

We smash this diagram with $g: A\to B$ and note that smashing a pushout square with an object yields a pushout square.

\begin{align*}
\xymatrix{A\otimes K_1 \ar[rr]^{A\otimes i} \ar[dd] \ar[dr]^{g\otimes K_1} & & A\otimes K_2 \ar[dr]^{g\otimes K_2} \ar[dd] & \\ & B\otimes K_1 \ar[rr] \ar[dd] & & B\otimes K_2 \ar[dd] \\ A\otimes X_\alpha \ar[rr]^{B\otimes i} \ar[dr]_{g\otimes X_\alpha} & & A\otimes X_{\alpha+1} \ar[dr]^{g\otimes X_{\alpha+1}} & \\ & B\otimes X_\alpha \ar[rr] & & B\otimes X_{\alpha+1}}
\end{align*}

Because $g$ is a cofibration of cofibrant objects, $A$ and $B$ are cofibrant. Because pushouts of cofibrations are cofibrations, $X_\alpha \hookrightarrow X_{\alpha+1}$ for all $\alpha$. Because $X_0$ is cofibrant, $X_\alpha$ is cofibrant for all $\alpha$. So all objects above are cofibrant. Furthermore, $g\otimes K_i = g \boxprod (0\hookrightarrow K_i)$. Thus, by the Pushout Product axiom on $\M$ and the fact that cofibrations in $\M$ match those in $L_\C(\M)$, these maps are cofibrations.

Finally, the maps $g\otimes K_i$ are weak equivalences in $L_\C(\M)$ by the base case above, while $g\otimes X_\alpha$ is a weak equivalence in $L_\C(\M)$ by the inductive hypothesis. Thus, by Dan Kan's Cube Lemma (Lemma 5.2.6 in \cite{hovey-book}), the map $g\otimes X_{\alpha+1}$ is a weak equivalence in $L_\C(\M)$.

For the limit case, suppose we are given a cofibrant object $X = \colim_{\alpha<\beta} X_\alpha$ where each $X_\alpha$ is cofibrant and flat in $L_\C(\M)$. Because each $X_\alpha$ is cofibrant, $g\otimes X_\alpha = g\boxprod (0\hookrightarrow X_\alpha)$ is still a cofibration. By the inductive hypothesis, each $g\otimes X_\alpha$ is also a $\C$-local equivalence, hence a trivial cofibration in $L_\C(\M)$. Since trivial cofibrations are always closed under transfinite composition, $g\otimes X = g\otimes \colim X_\alpha = \colim (g\otimes X_\alpha)$ is also a trivial cofibration in $L_\C(\M)$.
\end{proof}

We now pause for a moment to extract the key point in the proof above, where we applied the universal property of Bousfield localization. This is a reformulation Theorem 3.3.18 in \cite{hirschhorn} that we will need below.

\begin{lemma} \label{lemma:left-Quillen-and-loc}
A left Quillen functor $F:\M\to \M$ induces a left Quillen functor $L_\C F:L_\C(\M) \to L_\C(\M)$ if and only if for all $f\in \C$, $F(f)$ is $\C$-local equivalence.
\end{lemma}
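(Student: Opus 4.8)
The plan is to prove Lemma~\ref{lemma:left-Quillen-and-loc} directly from the universal property of left Bousfield localization, namely Theorem~3.3.18 and Theorem~3.3.20 of \cite{hirschhorn}. The statement is a clean ``iff'', so I would split it into two implications, neither of which should require more than formal properties of the localization.

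\medskip

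\noindent\textbf{Proof of Lemma~\ref{lemma:left-Quillen-and-loc}.}
First suppose that $F(f)$ is a $\C$-local equivalence for every $f \in \C$. Consider the composite $j \circ F : \M \to L_\C(\M)$, where $j : \M \to L_\C(\M)$ is the identity-on-underlying-category left Quillen functor realizing the localization. Since $F$ is left Quillen and $j$ is left Quillen, the composite $j \circ F$ is left Quillen. By hypothesis, for each $f \in \C$ the map $j(F(f)) = F(f)$ is a weak equivalence in $L_\C(\M)$ (a $\C$-local equivalence), and $F$ preserves cofibrations between cofibrant objects, so $\mathbb{L}(jF)$ sends the image of each $f \in \C$ in $\Ho(\M)$ to an isomorphism in $\Ho(L_\C(\M))$. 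Theorem~3.3.20 of \cite{hirschhorn} (the universal property of $L_\C$ for left Quillen functors out of $\M$ inverting $\C$) then yields a left Quillen functor $L_\C F : L_\C(\M) \to L_\C(\M)$ with $(L_\C F) \circ j = j \circ F$; since $j$ is the identity on objects and morphisms, $L_\C F$ agrees with $F$ as a functor of underlying categories, which is exactly the assertion.

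\medskip

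For the converse, suppose $F$ induces a left Quillen functor $L_\C F : L_\C(\M) \to L_\C(\M)$, meaning $F$ itself — viewed as a functor of underlying categories — is left Quillen for the localized model structures. Then $L_\C F$ preserves trivial cofibrations of $L_\C(\M)$, and more generally (by Ken Brown's lemma, since the domains of maps in $\C$ are cofibrant) preserves weak equivalences between cofibrant objects of $L_\C(\M)$. Given $f \in \C$, $f$ is a cofibration between cofibrant objects of $\M$, hence a cofibration between cofibrant objects of $L_\C(\M)$, and $f$ is a $\C$-local equivalence, i.e. a weak equivalence in $L_\C(\M)$. Applying $L_\C F = F$ shows $F(f)$ is a weak equivalence in $L_\C(\M)$, that is, a $\C$-local equivalence, as required. \qed

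\medskip

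\noindent The only subtlety — and the one place I would be careful — is making sure the cited universal property is applied in the correct form: Theorem~3.3.18 of \cite{hirschhorn} is phrased for detecting when a left Quillen functor factors through the localization, and one must check that ``$\mathbb{L}F$ inverts $\C$'' is the precise hypothesis needed, which it is because the maps in $\C$ are cofibrations between cofibrant objects (so no derived-functor bookkeeping is needed) and because $j$ is literally the identity on the underlying category, so ``induces a left Quillen functor $L_\C(\M) \to L_\C(\M)$'' is unambiguous. Everything else is a formal consequence of Ken Brown's lemma and the defining properties of $j$.
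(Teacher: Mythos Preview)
Your proof is correct and follows the paper's approach exactly: the paper presents this lemma as a direct reformulation of Theorem~3.3.18 in \cite{hirschhorn} (the universal property of left Bousfield localization) without spelling out the details, and you have supplied precisely those details, together with the easy converse via Ken Brown. Note that for the converse you could even avoid Ken Brown, since the maps in $\C$ are already trivial cofibrations in $L_\C(\M)$ and left Quillen functors preserve those directly.
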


We turn now to the unit axiom.

\begin{proposition} \label{prop:unit-axiom}
If $\M$ satisfies the unit axiom then any Bousfield localization $L_\C(\M)$ satisfies the unit axiom. If cofibrant objects are flat in $\M$ then the map $QS\otimes Y \to Y$, induced by cofibrant replacement $QS\to S$, is a weak equivalence for all $Y$, not just cofibrant $Y$. Furthermore, for any weak equivalence $f:K\to L$ between cofibrant objects, $f\otimes Y$ is a weak equivalence.
\end{proposition}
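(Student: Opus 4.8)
The plan is to treat the three assertions separately: the first is formal, while the second and third will both come out of a single ``naturality square plus two-out-of-three'' argument.

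For the first assertion I would use only that $L_\C(\M)$ has the same cofibrations as $\M$, hence the same trivial fibrations (a map is a trivial fibration iff it has the right lifting property against all cofibrations), hence the same cofibrant objects; in particular a cofibrant replacement $QS\to S$ of the unit in $\M$ also serves as one in $L_\C(\M)$. Since every weak equivalence of $\M$ is a $\C$-local equivalence, and $QS\otimes X\to X$ is a weak equivalence in $\M$ for cofibrant $X$ by hypothesis, the same map is a weak equivalence in $L_\C(\M)$; so $L_\C(\M)$ satisfies the unit axiom, with no flatness hypothesis needed.

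For the third assertion, I would take a weak equivalence $f\colon K\to L$ between cofibrant objects, an arbitrary object $Y$, and a cofibrant replacement $q\colon QY\to Y$, and then form the commutative square
\begin{align*}
\xymatrix{K\otimes QY \ar[r]^{f\otimes QY} \ar[d]_{K\otimes q} & L\otimes QY \ar[d]^{L\otimes q} \\ K\otimes Y \ar[r]_{f\otimes Y} & L\otimes Y}
\end{align*}
gotten by applying $-\otimes-$. The top map $f\otimes QY$ is a weak equivalence because $QY$ is cofibrant and cofibrant objects are flat; the vertical maps $K\otimes q$ and $L\otimes q$ are weak equivalences because $K$ and $L$ are cofibrant and $q$ is a weak equivalence (flatness again, in its mirrored form). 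Two-out-of-three then forces $f\otimes Y$ to be a weak equivalence. For the second assertion I would run this argument verbatim with the weak equivalence $QS\to S$ playing the role of $f$; the subtlety is that $S$ itself need not be cofibrant, so this is not literally an instance of the third assertion, but it still works because the right-hand vertical map becomes $S\otimes q\cong q$, a weak equivalence regardless of cofibrancy, while the top map $(QS\to S)\otimes QY$ is a weak equivalence by flatness applied to the cofibrant object $QY$. Hence $QS\otimes Y\to S\otimes Y\cong Y$ is a weak equivalence for every $Y$.

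I do not expect a genuine obstacle; the one bookkeeping point is that the vertical maps in the square are applications of flatness in the form ``(cofibrant object) $\otimes$ (weak equivalence) is a weak equivalence'', which is the mirror image of the stated hypothesis and follows from it by the symmetry of $\otimes$ (or should be regarded as part of the flatness hypothesis in the non-symmetric setting).
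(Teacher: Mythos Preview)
Your proposal is correct and matches the paper's proof essentially line for line: the same ``same cofibrations $\Rightarrow$ same trivial fibrations $\Rightarrow$ same cofibrant replacements'' argument for the first assertion, and the same naturality square with two-out-of-three for the other two. The only cosmetic difference is that the paper justifies the top map $QS\otimes QY\to QY$ via the unit axiom for the cofibrant object $QY$ rather than via flatness of $QY$, and treats the second assertion before the third; your remark about the symmetric form of flatness is a fair point the paper leaves implicit.
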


\begin{proof}

Since $L_\C(\M)$ has the same cofibrations as $\M$, it must also have the same trivial fibrations. Thus, it has the same cofibrant replacement functor and the same cofibrant objects. Thus, the unit axiom on $L_\C(\M)$ follows directly from the unit axiom on $\M$, because a weak equivalence in $\M$ is in particular a $\C$-local equivalence. 

We now assume cofibrant objects are flat and that $Y$ is an object of $\M$. Consider the following diagram:
\begin{align*}
\xymatrix{QS \otimes QY \ar[r] \ar[d] & QY \ar[d] \\ QS\otimes Y \ar[r] & Y}
\end{align*}

The top map is a weak equivalence by the unit axiom for the cofibrant object $QY$. The left vertical map is a weak equivalence because cofibrant objects are flat and $QS$ is cofibrant. The right vertical is a weak equivalence by definition of $QY$. Thus, the bottom arrow is a weak equivalence by the two out of three property.

For the final statement we again apply cofibrant replacement to $Y$ and we get
\begin{align*}
\xymatrix{K \otimes QY \ar[r] \ar[d] & L\otimes QY \ar[d] \\ K\otimes Y \ar[r] & L\otimes Y}
\end{align*}

Again the top horizontal map and the vertical maps are weak equivalences because cofibrant objects are flat (for the first use that $QX$ is cofibrant, for the second use that $K$ and $L$ are cofibrant).
\end{proof}

We turn now to proving Theorem \ref{thm:PPAxiom}. As mentioned in the proof of Proposition \ref{cofObjFlat}, if $h$ and $g$ are $L_\C(\M)$-cofibrations then they are $\M$-cofibrations and so $h\boxprod g$ is a cofibration in $\M$ (hence in $L_\C(\M)$) by the pushout product axiom on $\M$. To verify the rest of the pushout product axiom on $L_\C(\M)$ we must prove that if $h$ is a trivial cofibration in $L_\C(\M)$ and $g$ is a cofibration in $L_\C(\M)$ then $h\boxprod g$ is a weak equivalence in $L_\C(\M)$. 

\begin{proposition} \label{prop:helper-for-monoidal-loc}
Let $\M$ be a cofibrantly generated monoidal model category in which cofibrant objects are flat and the domains of the generating cofibrations are cofibrant. Let $I$ denote the generating cofibrations of $\M$. Suppose that every map of the form $f \otimes id_K$, where $f$ is in $\C$ and $K$ is a domain or codomain of a map in $I$, is a $\C$-local equivalence. Then $L_\C(\M)$ satisfies the pushout product axiom.
\end{proposition}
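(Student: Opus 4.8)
The plan is to reduce the pushout product axiom on $L_\C(\M)$ to a statement about generating (trivial) cofibrations and then invoke the results already established. By Proposition 4.2.5 of \cite{hovey-book}, it suffices to check the pushout product axiom on generating maps. Since $L_\C(\M)$ has the same cofibrations as $\M$, the interaction of two cofibrations is already handled by the pushout product axiom on $\M$; the only new content is that $h \boxprod g$ is a $\C$-local equivalence whenever $h$ is an $L_\C(\M)$-trivial cofibration and $g$ is a cofibration. So I would begin by reducing to the case where $h$ ranges over a set $J_\C$ of generating trivial cofibrations for $L_\C(\M)$ and $g$ ranges over the generating cofibrations $I$ of $\M$ (which also generate the cofibrations of $L_\C(\M)$). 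Hirschhorn gives an explicit description of such a $J_\C$: it may be taken to be $J \cup \Lambda$, where $J$ is the set of generating trivial cofibrations of $\M$ and $\Lambda$ is a set of horns on the maps of $\C$, built via pushout products $\C \boxprod I$ together with the usual mapping-cylinder-type constructions. The key point is that every map in $J_\C$ can be obtained, up to the operations that preserve trivial cofibrations and weak equivalences (pushout, transfinite composition, retract, and the 2-out-of-3 property), from the maps in $J$ and the maps $f \boxprod i$ for $f \in \C$ and $i \in I$.

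Next I would dispose of the two families. For $h \in J$: $h$ is already a trivial cofibration in $\M$, so $h \boxprod g$ is a trivial cofibration in $\M$ by the pushout product axiom on $\M$, hence a $\C$-local equivalence. For the horn maps in $\Lambda$: here is where the hypothesis enters. Each generator of $\Lambda$ is, by construction, built from the maps $f \boxprod i$ with $f \in \C$, $i \in I$. Now $f \boxprod i$ is a pushout-product of $f$ with the map $\emptyset \to K$ or a map $K_1 \to K_2$ between domains/codomains of $I$. Using the hypothesis that $f \otimes \mathrm{id}_K$ is a $\C$-local equivalence for all such $K$, together with Proposition \ref{cofObjFlat} (cofibrant objects are flat in $L_\C(\M)$) and the gluing/cube lemma (Lemma 5.2.6 of \cite{hovey-book}) applied to the defining pushout square of $f \boxprod i$, one sees that $f \boxprod i$ is itself a $\C$-local equivalence — it is a cofibration between cofibrant objects whose source and target receive $\C$-local equivalences from the corners $A \otimes K_j$ and $B \otimes K_j$. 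Equivalently, and more cleanly: the hypothesis plus Lemma \ref{lemma:left-Quillen-and-loc} says that for each cofibrant $K$ among the (co)domains of $I$, the left Quillen functor $K \otimes -$ descends to $L_\C(\M)$, so it preserves $\C$-local equivalences between cofibrant objects, and in particular sends each $f \in \C$ to a $\C$-local equivalence; feeding this into the construction of the horns shows every map of $\Lambda$ is a $\C$-local equivalence.

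Finally I would assemble the pushout product. Given $h \in J_\C$ and $g \in I$, the map $h \boxprod g$ is a cofibration (by the pushout product axiom on $\M$); I must show it is a weak equivalence in $L_\C(\M)$. Writing $g : C_1 \hookrightarrow C_2$ and $h : D_1 \hookrightarrow D_2$, the domain and codomain of $h \boxprod g$ sit in a pushout square whose other three corners are $D_1 \otimes C_1$, $D_1 \otimes C_2$, $D_2 \otimes C_1$; the maps $h \otimes \mathrm{id}_{C_j} = h \boxprod (\emptyset \to C_j)$ are $\C$-local equivalences by the two cases above (they are instances of $h$ pushout-producted against a generating cofibration, or limits thereof), and they are cofibrations between cofibrant objects. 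Then the cube lemma (Lemma 5.2.6 of \cite{hovey-book}), applied to the map of pushout squares induced by $g$, forces $h \boxprod g$ to be a $\C$-local equivalence. To pass from generators to all trivial cofibrations $h$ in $L_\C(\M)$ and all cofibrations $g$, one notes that the class of $g$ for which $h \boxprod g$ is a trivial cofibration is closed under pushout, transfinite composition, and retract, and contains $I$; and symmetrically in $h$ using that trivial cofibrations are so closed. The main obstacle I anticipate is the bookkeeping in the horn construction for $J_\C$ — verifying that the explicit generating trivial cofibrations of $L_\C(\M)$ really are built from $\C \boxprod I$ in a way that lets the flatness of cofibrant objects and the cube lemma propagate the $\C$-local equivalence property — rather than any conceptual difficulty; everything else is a routine application of Lemma \ref{lemma:left-Quillen-and-loc}, Proposition \ref{cofObjFlat}, and the cube lemma.
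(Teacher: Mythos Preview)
Your detour through an explicit description of $J_\C$ is both unnecessary and problematic. The claim that the generating trivial cofibrations of $L_\C(\M)$ can be taken to be $J \cup \Lambda$ (horns on $\C$) is not valid in general: as the paper itself observes in Section~\ref{sec:preservation-of-commutative} (around Corollary~\ref{cor:loc-pres-for-stable-proper}), this identification requires extra hypotheses such as stability and properness (the Barnes--Roitzheim result). In general Hirschhorn's $J_\C$ is produced by a Bousfield--Smith cardinality argument and has no such explicit form. So the ``bookkeeping'' you anticipate is a real gap, not mere bookkeeping.

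The paper sidesteps this entirely by reducing on only \emph{one} side of the pushout product. For fixed $h$, the class of $g$ with $h \boxprod g$ a trivial cofibration is closed under pushout, transfinite composition, and retract, so it suffices to take $g \in I$ while letting $h$ be an \emph{arbitrary} $L_\C(\M)$-trivial cofibration. Tractability makes the (co)domains $K, L$ of $g$ cofibrant, and then Proposition~\ref{cofObjFlat} (cofibrant objects are flat in $L_\C(\M)$) says directly that $K \otimes h$ and $L \otimes h$ are $\C$-local trivial cofibrations---with no need to know anything about the shape of $h$. The pushout square then gives $h \boxprod g$ as a $\C$-local equivalence by 2-out-of-3, exactly as in your final paragraph. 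In short: your assembly step is correct, but you should feed it with Proposition~\ref{cofObjFlat} applied to arbitrary $h$, rather than with a case analysis of $J_\C$ that is not available at this level of generality.
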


\begin{proof} 

We have already remarked that the cofibration part of the pushout product axiom on $L_\C(\M)$ follows from the pushout product axiom on $\M$, since the two model categories have the same cofibrations. By Proposition 4.2.5 of \cite{hovey-book} it is sufficient to check the pushout product axiom on generating (trivial) cofibrations. So suppose $h:X\to Y$ is an $L_\C(\M)$ trivial cofibration and $g:K\to L$ is a generating cofibration in $L_\C(\M)$ (equivalently, in $\M$). Then we must show $h\boxprod g$ is an $L_\C(\M)$ trivial cofibration

By hypothesis on $\M$, $K$ and $L$ are cofibrant. Because $h$ is a cofibration, $K\otimes h$ and $L\otimes h$ are cofibrations by the pushout product axiom on $\M$ (because $K\otimes h = (\emptyset \hookrightarrow K)\boxprod h$). By Proposition \ref{cofObjFlat}, cofibrant objects are flat in $L_\C(\M)$. So $K\otimes h$ and $L\otimes h$ are also weak equivalences. In particular, $K\otimes -$ and $L\otimes -$ are left Quillen functors. Consider the following diagram:

\begin{align*}
\xymatrix{K\otimes X \po \ar@{^{(}->}[r]^\simeq \ar[d] & K\otimes Y \ar[d] \ar@/^1pc/[ddr] & \\ L\otimes X \ar[r]^(.3){\simeq} \ar@/_1pc/[drr]_\simeq & (K\otimes Y) \coprod_{K\otimes X} (L\otimes X) \ar[dr]^{h\boxprod g} & \\ & & L\otimes Y}
\end{align*}

The map $L\otimes X \to (K\otimes Y) \coprod_{K\otimes X} (L\otimes X)$ is a trivial cofibration because it is the pushout of a trivial cofibration. Thus, by the two out of three property for the lower triangle, $h\boxprod g$ is a weak equivalence. Since we already knew it was a cofibration (because it is so in $\M$), this means it is a trivial cofibration.
\end{proof}

We are now ready to complete the proof of Theorem \ref{thm:PPAxiom}.

\begin{proof}[Proof of Theorem \ref{thm:PPAxiom}]

We begin with the forwards direction. Suppose $L_\C(\M)$ satisfies the pushout product axiom and has cofibrant objects flat. Let $f$ be any map in $\C$. Note that in particular, $f$ is a $\C$-local equivalence. Because cofibrant objects are flat, the map $f \otimes K$ is a $\C$-local equivalence for any cofibrant $K$. So the collection $\C \otimes K$ is contained in the $\C$-local equivalences, where $K$ runs through the class of cofibrant objects, i.e. $L_\C$ is a monoidal Bousfield localization.

For the converse, we apply our three previous propositions. That cofibrant objects are flat in $L_\C(\M)$ is the content of Proposition \ref{cofObjFlat}. The unit axiom on $L_\C(\M)$ follows from Proposition \ref{prop:unit-axiom} applied to $L_\C(\M)$. 
That the pushout product axiom holds on $L_\C(\M)$ is Proposition \ref{prop:helper-for-monoidal-loc}.
\end{proof}

\subsection{Proof of Theorem \ref{thm:PPAxiom-nontractable}} \label{subsec:proof-nontractable}

We will now prove Theorem \ref{thm:PPAxiom-nontractable}, following the outline above. The proof that cofibrant objects are flat in $L_\C(\M)$ will proceed just as it did in Proposition \ref{cofObjFlat}. Proposition \ref{prop:unit-axiom} again implies the unit axiom in $L_\C(\M)$. Deducing the pushout product axiom on $L_\C(\M)$ will be more complicated without the assumption on the domains of $I$. For this reason, we need the following lemma. First, let $I'$ be obtained from the generating cofibrations $I$ by applying any cofibrant replacement $Q$ to all $i\in I$ and then taking the left factor in the cofibration-trivial fibration factorization of $Qi$. So $I'$ consists of cofibrations between cofibrant objects.

\begin{lemma} \label{lemma:I'UJ}
Suppose $\M$ is a left proper model category cofibrantly generated by sets $I$ and $J$ in which the domains of maps in $J$ are small relative to $I$-cell. Then the sets $I'\cup J$ and $J$ cofibrantly generate $\M$. 
\end{lemma}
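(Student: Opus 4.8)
The plan is to verify that $I'\cup J$ and $J$ present exactly the model structure that $\M$ already carries. Since $\M$ is cofibrantly generated by $I$ and $J$ and we are not altering the weak equivalences, it suffices to check three things: that $(I'\cup J)$-inj is the class of trivial fibrations, that $J$-inj is the class of fibrations, and that the domains of $I'\cup J$ and of $J$ are small relative to $(I'\cup J)$-cell and to $J$-cell respectively. The two conditions involving only $J$ are part of the hypothesis that $\M$ is cofibrantly generated by $I$ and $J$, so they need no further work. Moreover every map in $I'$ is a cofibration and every map in $J$ is a trivial cofibration, so every map in $(I'\cup J)$-cell is a cofibration; hence $(I'\cup J)$-cof is contained in the class of cofibrations, which is $I$-cof, and so $(I'\cup J)$-inj already contains every trivial fibration. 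The content of the lemma is the reverse inclusion, and I will get it by proving the stronger statement that $I\subseteq(I'\cup J)$-cof: this forces $(I'\cup J)$-cof to equal $I$-cof, hence $(I'\cup J)$-inj to equal $I$-inj, which is the class of trivial fibrations.

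So fix a map $i\colon A\to B$ in $I$, let $q_A\colon QA\to A$ and $q_B\colon QB\to B$ be the cofibrant replacements used in building $I'$, let $Qi\colon QA\to QB$ be the induced map, so that $i\circ q_A=q_B\circ Qi$, and let $Qi=p\circ i'$ be the chosen factorization into a cofibration $i'\colon QA\to C$ (so $i'\in I'$) followed by a trivial fibration $p\colon C\to QB$. Form the pushout $D=A\coprod_{QA}C$ of $i'$ along $q_A$. This produces two maps: $\iota\colon A\to D$, which is a pushout of $i'\in I'$ and hence lies in $(I'\cup J)$-cof, and $\kappa\colon C\to D$, which is a pushout of the weak equivalence $q_A$ along the cofibration $i'$ and hence is a weak equivalence by left properness of $\M$ --- this is the only place left properness is used. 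The universal property of the pushout supplies a map $\phi\colon D\to B$ with $\phi\circ\iota=i$ and $\phi\circ\kappa=q_B\circ p$; since $q_B\circ p$ is a composite of trivial fibrations and $\kappa$ is a weak equivalence, the two-out-of-three property shows $\phi$ is a weak equivalence.

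Next, factor $\phi$ by the small object argument on $J$ as $D\xrightarrow{j}D'\xrightarrow{\pi}B$, with $j$ in $J$-cell and $\pi$ in $J$-inj. Since $j$ is a trivial cofibration and $\phi$ is a weak equivalence, two-out-of-three makes $\pi$ a trivial fibration. The composite $j\circ\iota\colon A\to D'$ lies in $(I'\cup J)$-cof --- it is the composite of $\iota$, which lies in $(I'\cup J)$-cof, with $j$, which lies in $J$-cof and hence in $(I'\cup J)$-cof --- and it satisfies $\pi\circ(j\circ\iota)=\phi\circ\iota=i$. As $i$ is a cofibration and $\pi$ a trivial fibration, a lift in the square with top edge $j\circ\iota$, left edge $i$, right edge $\pi$, and bottom edge $id_B$ provides a retraction exhibiting $i$ as a retract of $j\circ\iota$; since $(I'\cup J)$-cof is closed under retracts, $i\in(I'\cup J)$-cof. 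This establishes $I\subseteq(I'\cup J)$-cof, and with it the identification of $(I'\cup J)$-inj with the trivial fibrations.

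It remains to dispatch smallness. Since $(I'\cup J)$-cof equals $I$-cof, every relative $(I'\cup J)$-cell complex is in particular an $I$-cofibration, so it is enough to know that the domains of $I'$ and of $J$ are small relative to $I$-cof. The domains of $J$ are small relative to $I$-cell by hypothesis; the domains of $I'$ are the objects $QA$, which are $I$-cell complexes, having been built from $\emptyset$ by the small object argument; and an object that is small relative to $I$-cell is small relative to $I$-cof. These are standard facts about cofibrantly generated model categories (see \cite{hirschhorn}, Chapter~10, or \cite{hovey-book}, Chapter~2). I expect the construction in the middle two paragraphs to be the main obstacle: to replace the possibly non-cofibrant generators in $I$ by cofibrations between cofibrant objects, one is forced to work over the zig-zag $A\xleftarrow{q_A}QA\xrightarrow{i'}C$, and it is the combination of the pushout-with-left-properness device and the concluding retract argument that bridges the gap between $i$ and $i'$.
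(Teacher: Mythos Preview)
Your proof is correct. The core difference from the paper's argument is in how the ``hard'' inclusion is established. The paper shows directly that any map $p$ with the right lifting property against $I'\cup J$ is a trivial fibration: first $p$ is a fibration (from the RLP against $J$), and then Proposition~13.2.1 of \cite{hirschhorn} is invoked as a black box to conclude that in a left proper model category a fibration lifting against a cofibrant approximation $i'$ of $i$ also lifts against $i$ itself. You instead prove the dual statement $I\subseteq (I'\cup J)\text{-cof}$ by an explicit construction: push $i'$ out along $q_A$, use left properness to control the weak equivalence, factor through $J$, and conclude with a retract argument. Your construction is essentially a hands-on proof of the special case of Hirschhorn~13.2.1 needed here, so the two arguments are close cousins; yours is more self-contained, while the paper's is shorter because it defers the pushout-and-retract work to the reference. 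For smallness, the paper spells out the cardinal estimate showing each $QA$ is small relative to $I$-cell, whereas you cite this as standard; both are fine, though if you want a fully self-contained write-up you might include the one-line argument that an $I$-cell complex is small because it is a colimit of a set-indexed diagram of small objects.
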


\begin{proof}
We verify the conditions given in Definition 11.1.2 of \cite{hirschhorn}. We have not changed $J$, so the fibrations are still precisely the maps satisfying the right lifting property with respect to $J$ and the maps in $J$ still permit the small object argument because the domains are small relative to $J$-cell.

Any map that has the right lifting property with respect to all maps in $I$ is a trivial fibration, so will in particular have the right lifting property with respect to all cofibrations, hence with respect to maps in $I'\cup J$. Conversely, suppose $p$ has the right lifting property with respect to all maps in $I' \cup J$. We are faced with the following lifting problem:
\begin{align*}
\xymatrix{A' \ar[r] \ar[d]^{i'} & A \ar[r] \ar[d]^i & X\ar[d]^p \\
B' \ar[r] & B \ar[r] & Y}
\end{align*}

Because $p$ has lifting with respect to $I'\cup J$, it has the right lifting property with respect to $J$. This guarantees us that $p$ is a fibration. Now because $\M$ is left proper, Proposition 13.2.1 in \cite{hirschhorn} applies to solve the lifting diagram above. In particular, because $p$ has the right lifting property with respect to $I'$, $p$ must have the right lifting property with respect to $I$. Thus, $p$ is a trivial fibration as desired.

We now turn to smallness. Any domain of a map in $J$ is small relative to $J$-cell, but in general this would not imply smallness relative to $I$-cell. We have assumed the domains of maps in $J$ are small relative to $I$-cell, so they are small relative to $(J\cup I')$-cell because $J\cup I'$ is contained in $I$-cell.

Any domain of a map in $I'$ is of the form $QA$ for $A$ a domain of a map in $I$. We will show $QA$ is small relative to $I$-cell. As $J\cup I'$ is contained in $I$-cell this will show $QA$ is small relative to $J\cup I'$. Consider the construction of $QA$ as the left factor in
\begin{align*}
\xymatrix{ & QA \ar@{->>}[dr]^\simeq & \\
\emptyset \ar@{^(->}[ur] \ar[rr] & & A}
\end{align*}

The map $\emptyset \to QA$ is in $I$-cell, so $QA$ is a colimit of cells (let us say $\kappa_A$ many), each of which is $\kappa$-small where $\kappa$ is the regular cardinal associated to $I$ by Proposition 11.2.5 of \cite{hirschhorn}. So for any $\lambda$ greater than the cofinality of $\max(\kappa,\kappa_A)$, a map from $QA$ to a $\lambda$-filtered colimit of maps in $I$-cell must factor through some stage of the colimit because all the cells making up $QA$ will factor in this way. One can find a uniform $\lambda$ for all objects $QA$ by an appeal to Lemma 10.4.6 of \cite{hirschhorn}.

\end{proof}

\begin{remark}
In a combinatorial model category no smallness hypothesis needs to be made because all objects are small. In a cellular model category, the assumption that the domains of $J$ are small relative to cofibrations is included. As these hypotheses are standard when working with left Bousfield localization, we shall say no more about the additional smallness hypothesis placed on $J$ above.
\end{remark}

\begin{corollary}
Suppose $\M$ is a left proper model category cofibrantly generated by sets $I$ and $J$ in which the domains of maps in $J$ are small relative to $I$-cell and are cofibrant. Then there exist a set of generating cofibrations $I'$ with cofibrant domains. 
\end{corollary}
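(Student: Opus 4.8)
The plan is to apply Lemma \ref{lemma:I'UJ} to swap out the generating cofibrations $I$ for a set all of whose domains are cofibrant, while observing that $J$ already has this property. First I would note that tractability only requires the \emph{domains} of the generating (trivial) cofibrations to be cofibrant: every map in $I$ and in $J$ is a cofibration, so if its domain $A$ is cofibrant then the composite $\emptyset \to A \to B$ exhibits its codomain $B$ as cofibrant as well. This is exactly the ``hence codomains'' in the definition of tractable, so the codomains require no separate attention. By hypothesis the domains of the maps in $J$ are cofibrant, so $J$ is already fine; the only obstruction to tractability is that the domains of the maps in $I$ may fail to be cofibrant.

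To remove that obstruction I would invoke Lemma \ref{lemma:I'UJ}, whose hypotheses (left properness, and the domains of $J$ small relative to $I$-cell) are precisely the ones assumed here. It produces the set $I'$ obtained from $I$ by first applying a cofibrant replacement $Q$ to each $i \in I$ and then taking the left factor of a cofibration--trivial-fibration factorization of $Qi$, and it asserts that $I' \cup J$ and $J$ cofibrantly generate $\M$ (including the requisite smallness for the small object argument). Each map in $I'$ is a cofibration with domain of the form $QA$, hence cofibrant; its codomain is then cofibrant by the observation of the previous paragraph. Likewise every map in $I' \cup J$ is a cofibration with cofibrant domain — those in $I'$ by construction, those in $J$ by hypothesis — and hence with cofibrant codomain. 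Therefore the presentation of $\M$ by the generating sets $I' \cup J$ and $J$ is tractable, which is what ``$\M$ can be made tractable'' means.

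I do not expect a genuine obstacle: the one delicate point — that replacing $I$ by $I' \cup J$ leaves the model structure and the small-object-argument hypotheses intact — is exactly the content of Lemma \ref{lemma:I'UJ}, which in turn rests on left properness (via Proposition 13.2.1 of \cite{hirschhorn}) and the smallness of the domains of $J$ relative to $I$-cell, both of which are assumed. The only thing that needs a moment's care is bookkeeping: one should confirm that $I'$ genuinely consists of cofibrations between cofibrant objects (which it does, being built from honest factorizations of the maps $Qi$), and that $Q$ can be chosen functorially so that $I'$ is again a set.
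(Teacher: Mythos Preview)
Your proposal is correct and matches the paper's approach exactly: the paper states this corollary without proof, treating it as an immediate consequence of Lemma~\ref{lemma:I'UJ}, and your write-up spells out precisely why that lemma yields a tractable presentation once the domains of $J$ are assumed cofibrant.
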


\begin{remark} \label{remark:simpson-example}
Note that this corollary does not say that any left proper, cofibrantly generated model category has generating sets $I$ and $J$ with cofibrant domains. There is an example due to Carlos Simpson (found on page 199 of \cite{simpson-book}) of a left proper, combinatorial model category that has no such sets $I$ and $J$. In this example the cofibrations and trivial cofibrations are the same, so cannot be leveraged against one another in the way we have done above.
\end{remark}

We are now prepared to prove Theorem \ref{thm:PPAxiom-nontractable}.

\begin{proof}[Proof of Theorem \ref{thm:PPAxiom-nontractable}]
First, if $L_\C$ is a monoidal Bousfield localization then every map of the form $f\otimes id_K$, where $f\in \C$ and $K$ is cofibrant, is a $\C$-local equivalence. This is because $f$ is a $\C$-local equivalence and cofibrant objects are flat in $L_\C(\M)$. We turn now to the converse.

Assume every map of the form $f\otimes id_K$, where $f\in \C$ and $K$ is cofibrant, is a $\C$-local equivalence. Then cofibrant objects are flat in $L_\C(\M)$. To see this, let $X$ be cofibrant and define $F(-) = X\otimes -$. Then Lemma \ref{lemma:left-Quillen-and-loc} implies $F$ is left Quillen when viewed as a functor from $L_\C(\M)$ to $L_\C(\M)$. So $F$ takes $\C$-local equivalences between cofibrant objects to $\C$-local equivalences. By the reduction at the beginning of the proof of Proposition \ref{cofObjFlat}, this implies $F$ takes all $\C$-local equivalences to $\C$-local equivalences.

Next, the unit axiom on $L_\C(\M)$ follows from the unit axiom on $\M$, by Proposition \ref{prop:unit-axiom}. Finally, we must prove the pushout product axiom holds on $L_\C(\M)$. As in the proof of Proposition \ref{prop:helper-for-monoidal-loc}, Proposition 4.2.5 of \cite{hovey-book} reduces the problem to checking the pushout product axiom on a set of generating (trivial) cofibrations. We apply Lemma \ref{lemma:I'UJ} to $\M$ and check the pushout product axiom with respect to this set of generating maps. 

As in the case of Theorem \ref{thm:PPAxiom}, let $h:X\to Y$ be a trivial cofibration in $L_\C(\M)$ and let $g:K\to L$ be a generating cofibration. By the lemma, the map $g$ is either a cofibration between cofibrant objects or a trivial cofibration in $\M$. If the former, then the proof of Proposition \ref{prop:helper-for-monoidal-loc} goes through verbatim and proves that $h\boxprod g$ is an $L_\C(\M)$-trivial cofibration, since cofibrant objects are flat in $L_\C(\M)$. If the latter, then because $g$ is a trivial cofibration in $\M$ and $h$ is a cofibration in $\M$ we may apply the pushout product axiom on $\M$ to see that $h\boxprod g$ is a trivial cofibration in $\M$ (hence in $L_\C(\M)$ too). This completes the proof of the pushout product axiom on $L_\C(\M)$.
\end{proof}

\begin{remark}
The use of the lemma demonstrates that this proposition proves something slightly more general. Namely, if $\M$ is cofibrantly generated, left proper, has cofibrant objects flat, and the class of cofibrations is closed under pushout product then $\M$ satisfies the pushout product axiom.

Additionally, one could also prove the forwards direction in the theorem using only that $L_\C(\M)$ satisfies the pushout product axiom. For any cofibrant $K$ we have a cofibration $\phi_K:\emptyset \hookrightarrow K$. Note that for any $f\in \C$, $f\otimes K = f \boxprod \phi_K \subset \C$-local equivalences, because $f$ is a trivial cofibration in $L_\C(\M)$. 
\end{remark}

We record this remark because in the future we hope to better understand the connection between monoidal Bousfield localizations and the closed localizations that appeared in \cite{CGMV}, and this remark may be useful.

\section{Preservation of algebras over $\Sigma$-cofibrant operads} 
\label{sec:preservation-over-cofibrant}

In this section we will provide several applications of the results in the previous section. We remind the reader that for operads valued in $\M$, a map of operads $A\to B$ is said to be a trivial fibration if $A_n \to B_n$ is a trivial fibration in $\M$ for all $n$. An operad $P$ is said to be \textit{cofibrant} if the map from the initial operad into $P$ has the left lifting property in the category of operads with respect to all trivial fibrations of operads. An operad $P$ is said to be \textit{$\Sigma$-cofibrant} if it has this left lifting property only in the category of symmetric sequences. The $E_\infty$-operads considered in \cite{MayGeometry} are $\Sigma$-cofibrant precisely because the $n^{th}$ space is assumed to be an $E\Sigma_n$ space.

We begin with a theorem due to Markus Spitzweck, proven as Theorem 5 in \cite{spitzweck-thesis} and as Theorem A.8 in \cite{gutierrez-rondigs-spitzweck-ostvaer}, that makes it clear that the hypotheses of Corollary \ref{cor:loc-pres-alg-for-semi} are satisfied when $L_\C$ is a monoidal Bousfield localization and $P$ is a cofibrant operad. 

\begin{theorem} \label{thm:spitzweck}
Suppose $P$ is a $\Sigma$-cofibrant operad and $\M$ is a monoidal model category. Then $P$-alg is a semi-model category.
\end{theorem}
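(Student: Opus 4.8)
The plan is to obtain the semi-model structure on $P\text{-alg}$ by transferring the model structure of $\M$ across the free--forgetful adjunction $F : \M \leftrightarrows P\text{-alg} : U$: declare a map of $P$-algebras to be a weak equivalence or a fibration exactly when $U$ sends it to one in $\M$, and to be a cofibration when it has the left lifting property against all trivial fibrations, with candidate generating (trivial) cofibrations $F(I)$ and $F(J)$. Most of the semi-model axioms are then formal. The category $P\text{-alg}$ is bicomplete: limits are created by $U$, and colimits exist because $\M$ is cocomplete and closed monoidal (see \cite{BM03} or \cite{fresse-book}). The $2$-out-of-$3$ and retract axioms and the stability of fibrations and trivial fibrations under pullback are inherited from $\M$ since $U$ creates these classes, and $U$ preserves fibrations and trivial fibrations by definition. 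The initial object is cofibrant because identity maps are cofibrations, and cofibrations lift against trivial fibrations by the definition of "cofibration" together with adjunction. What remains is to produce the two factorizations and the lifting of trivial cofibrations with cofibrant domain against fibrations; for this one runs the small object argument on $F(I)$ and $F(J)$, which is legitimate because, under the standing smallness hypotheses, $U$ preserves filtered colimits (these being created by $U$, as $\otimes$ commutes with filtered colimits), so the domains of $F(I)$ and $F(J)$ are small relative to the corresponding cell complexes, as in the transfer lemma (Lemma 2.3 of \cite{SS00}).

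The heart of the argument, and the reason one obtains only a \emph{semi}-model structure, is the claim that if $j : A \to B$ is a transfinite composite of pushouts of maps in $F(J)$ and $A$ is cofibrant in $P\text{-alg}$, then $U(j)$ is a trivial cofibration in $\M$. Granting this, the $F(I)$-factorization of an arbitrary map works as usual; the $F(J)$-factorization $X \to Z \to Y$ of a map with cofibrant domain $X$ has left factor a trivial cofibration, since each stage of the cell complex has cofibrant domain (trivial cofibrations preserve cofibrancy) so the claim applies stagewise; and every trivial cofibration with cofibrant domain is, by the retract argument, a retract of such an $F(J)$-cell complex, hence lifts against fibrations (each $F(j)$ lifts against $U$-fibrations precisely because $j$ lifts against fibrations in $\M$). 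By a standard reduction the claim comes down to a single pushout of $P$-algebras
\[
\xymatrix{
F(K) \ar[r] \ar[d]_{F(k)} & A \ar[d] \\
F(L) \ar[r] & B
}
\]
with $k : K \to L$ a generating trivial cofibration of $\M$ and $A$ cofibrant in $P\text{-alg}$, and one must show $A \to B$ is a trivial cofibration in $\M$.

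To analyze this pushout I would invoke the standard filtration of $B$ (Schwede--Shipley in the monoid case, and Berger--Moerdijk, Fresse and Spitzweck in general): there is a sequence $A = B_0 \to B_1 \to B_2 \to \cdots$ with $\colim_n B_n = B$ in which $B_{n-1} \to B_n$ is a pushout along a map of the form $U_A(n) \otimes_{\Sigma_n} \gamma_n$, where $\gamma_n$ is the $n$-fold iterated pushout product of $k$ (a $\Sigma_n$-equivariant map) and $U_A(n)$ is the $\Sigma_n$-object of $n$-ary operations of the enveloping operad of $A$. By the pushout product axiom in $\M$, $\gamma_n$ is a trivial cofibration, and since $K$ and $L$ are cofibrant its source and target are cofibrant in $\M$; since $A$ is cofibrant in $\M$ (a point established along the way, using $\Sigma$-cofibrancy of $P$ and cofibrancy of $P(0) = F(\emptyset)$) and $P$ is $\Sigma$-cofibrant, $U_A(n)$ is a cofibrant $\Sigma_n$-object. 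The key lemma on equivariant pushout products (as in \cite{BM03} or \cite{fresse-book}) then shows that $U_A(n) \otimes_{\Sigma_n} \gamma_n$ is a trivial cofibration in $\M$. Hence each $B_{n-1} \to B_n$ is a trivial cofibration that preserves cofibrancy, and its transfinite composite $A \to B$ is a trivial cofibration in $\M$, proving the claim.

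The main obstacle is exactly this last paragraph: writing down the filtration of the pushout precisely, identifying its layers with the equivariant quotients $U_A(n) \otimes_{\Sigma_n} \gamma_n$, and checking that $\Sigma$-cofibrancy of $P$ alone controls these quotients. This forces one to carry, throughout the transfinite induction, the bookkeeping that every algebra in sight is cofibrant in $\M$; it is precisely the breakdown of this bookkeeping for non-cofibrant $A$ that prevents a full model structure and leaves only a semi-model structure, and it is why the "cofibrant domain" caveats appear in exactly the two axioms where they do.
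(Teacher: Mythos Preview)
The paper does not give its own proof of this statement; it attributes the result to Spitzweck and simply cites Theorem 5 of \cite{spitzweck-thesis} and Theorem A.8 of \cite{gutierrez-rondigs-spitzweck-ostvaer}. Your sketch is essentially the argument one finds in those references: transfer across the free--forgetful adjunction, dispose of the formal axioms, reduce the substantive content to a single pushout of $F(k)$ along a cofibrant $P$-algebra, and analyze that pushout via the enveloping-operad filtration together with the equivariant pushout-product lemma. So your approach agrees with the cited proof.

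Two small technical points to tighten. First, you invoke cofibrancy of $K$ and $L$ (the domains and codomains of $k \in J$), but the theorem as stated does not assume $\M$ is tractable, and the cited arguments do not use this: what is needed is only that $k^{\boxprod n}$ is a $\Sigma_n$-projective trivial cofibration, which follows from the pushout product axiom without any hypothesis on $K$ or $L$. Second, bicompleteness of $P\text{-alg}$ does not require $\M$ to be \emph{closed} monoidal; colimits of algebras exist because $P\text{-alg}$ is monadic over a cocomplete $\M$ and the free $P$-algebra monad preserves filtered colimits (or, if you prefer, reflexive coequalizers), which is how \cite{fresse-book} and \cite{spitzweck-thesis} handle it. Neither point affects the overall shape of your argument.
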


This theorem, applied to both $\M$ and $L_\C(\M)$ (if the localization is monoidal), endows the categories of $P$-algebras in $\M$ and $L_\C(\M)$ with inherited semi-model structures. By Corollary \ref{cor:loc-pres-alg-for-semi}, monoidal Bousfield localizations preserve algebras over $\Sigma$-cofibrant operads. In particular, monoidal localizations preserve $A_\infty$ and $E_\infty$-algebras in $\M$, since these algebras are encoded by $A_\infty$ and $E_\infty$-operads $P$ that are $\Sigma$-cofibrant (and weakly equivalent to $Ass$ and $Com$ respectively in the category $Coll(\M)$). When $\M$ is a category of spectra we are free to work with operads valued in spaces because the $\Sigma^\infty$ functor will take a $(\Sigma$-cofibrant) space-valued operad to a $(\Sigma$-cofibrant) spectrum-valued operad with the same algebras.

\subsection{Spaces and Spectra}

We now provide examples demonstrating the power of Theorem \ref{bigthm}. For topological spaces the situation is especially nice. We will always work in the context of pointed spaces, with the Quillen model structure.

\begin{proposition} \label{prop:monoidal-loc-spaces}
Let $\M$ be the model category of (pointed) simplicial sets or $k$-spaces. Every Bousfield localization of $\M$ is a monoidal Bousfield localization.
\end{proposition}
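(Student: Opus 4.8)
The plan is to verify the criterion of Theorem~\ref{thm:PPAxiom-nontractable}: since both $sSet_*$ and $k$-spaces are cofibrantly generated monoidal model categories in which cofibrant objects are flat (indeed every object of $sSet$ is flat, and smashing with a CW-complex preserves weak equivalences of spaces), it suffices to show that for every $f\colon A\to B$ in $\C$ and every cofibrant $K$, the map $f\otimes \mathrm{id}_K\colon A\wedge K\to B\wedge K$ is a $\C$-local equivalence. Here $\C$ is, by our standing convention, a set of cofibrations between cofibrant objects, so $A$, $B$, and $K$ are all cofibrant.

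First I would use the simplicial (resp. topological) enrichment: $sSet_*$ and $k$-spaces are simplicial model categories, so the adjunction $-\wedge K\dashv \Map(K,-)$ is a Quillen adjunction whenever $K$ is cofibrant (this is the SM7 axiom recalled in Section~\ref{sec:background}). By Lemma~\ref{lemma:left-Quillen-and-loc}, the left Quillen endofunctor $-\wedge K$ descends to a left Quillen endofunctor of $L_\C(\M)$ as soon as it carries each $f\in\C$ to a $\C$-local equivalence. So the whole statement reduces to a single assertion: for $f\colon A\to B$ in $\C$ and $K$ cofibrant, $f\wedge \mathrm{id}_K$ is a $\C$-local equivalence.

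The cleanest way to get this is to note that a $\C$-local equivalence between cofibrant objects is detected by the mapping spaces into $\C$-local objects $N$, and that the enriched adjunction gives a natural weak equivalence $\Map(A\wedge K, N)\simeq \Map(A,\Map(K,N))$ (and similarly for $B$), where $\Map(K,N)$ is fibrant because $K$ is cofibrant and $N$ is fibrant. Thus it is enough to know that $\Map(K,N)$ is again $\C$-local whenever $N$ is; granting that, $\Map(f,\Map(K,N))$ is a weak equivalence because $f\in\C$, hence so is $\Map(f\wedge \mathrm{id}_K,N)$, and $f\wedge \mathrm{id}_K$ is a $\C$-local equivalence. The claim that $\Map(K,N)$ is $\C$-local for cofibrant $K$ and $\C$-local $N$ is exactly the statement that the cotensor $\Map(K,-)$ is right Quillen from $L_\C(\M)$ to itself, which in turn is the adjoint form of ``$-\wedge K$ is left Quillen on $L_\C(\M)$''; so one should be slightly careful to organize the logic so it is not circular --- the honest route is to invoke that in a simplicial (or topological) model category $\M$, for cofibrant $K$ the functor $-\wedge K$ preserves \emph{all} weak equivalences between cofibrant objects (Ken Brown's lemma applied to the left Quillen functor $-\wedge K$), and then run a cofibrant-replacement reduction exactly as in the first paragraph of the proof of Proposition~\ref{cofObjFlat} to handle non-cofibrant $K$; but here our $\C$ already consists of maps between cofibrant objects, so even this is unnecessary once one observes that $\C$-local equivalences between cofibrant objects are stable under $-\wedge K$.

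The main obstacle, and the only place any real content enters, is pinning down that in $sSet_*$ (and $k$-spaces) the functor $-\wedge K$ with $K$ cofibrant actually preserves $\C$-local equivalences (not merely $\M$-weak equivalences) --- equivalently that $\C\wedge \mathrm{id}_K$ lands in the $\C$-local equivalences. I expect the fastest honest argument is: the enriched adjunction $(-\wedge K,\Map(K,-))$ makes $-\wedge K$ a left Quillen endofunctor of $\M$; for cofibrant $K$ and \emph{any} $\C$-local $N$, the simplicial set $\Map(K,N)$ is fibrant and, because smashing with a cofibrant object is homotopically a suspension-type operation that commutes with the framings/mapping spaces used to build $L_\C$, one checks directly from the definition of $\C$-local object that $\Map(K,N)$ is $\C$-local (this is the step to write out carefully); then Lemma~\ref{lemma:left-Quillen-and-loc} finishes it. With that in hand, Theorem~\ref{thm:PPAxiom-nontractable} immediately yields that $L_\C$ is a monoidal Bousfield localization.
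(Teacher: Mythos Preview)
Your outline is on the right track, but the single step you defer—showing $\Map(K,N)$ is $\C$-local for cofibrant $K$ and $\C$-local $N$—is the entire content of the proposition, and you have not actually carried it out. Your worry that this is circular (being the adjoint statement of ``$-\wedge K$ is left Quillen on $L_\C(\M)$'') is misplaced: the circularity dissolves once you apply the adjunction the \emph{other} way. To test $\C$-locality of $\Map(K,N)$ against $f\colon A\to B$ in $\C$, write
\[
\Map(f,\Map(K,N)) \;\cong\; \Map(K,\Map(f,N)),
\]
which makes sense precisely because here $\M$ is its own enriching category. Since $N$ is $\C$-local, $\Map(f,N)$ is a weak equivalence in $\M$ between fibrant objects; since $K$ is cofibrant, $\Map(K,-)$ is right Quillen and hence (by Ken Brown) takes this to a weak equivalence. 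That is the ``step to write out carefully,'' and it is two lines, not a detour through Proposition~\ref{cofObjFlat}.

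The paper's proof is essentially this same adjunction argument, organised as a chain of equivalences and bypassing any mention of $\Map(K,N)$ being $\C$-local. It runs: $f$ is a $\C$-local equivalence iff $F(f,Z)$ is a weak equivalence for all $\C$-local $Z$, iff $F(T,F(f,Z))$ is a weak equivalence for all (co)domains $T$ of generating cofibrations (using only that weak equivalences in $Top$ or $sSet$ are detected by mapping in such $T$), iff $F(T\wedge f,Z)$ is a weak equivalence by adjointness, iff $T\wedge f$ is a $\C$-local equivalence. This directly verifies the criterion of Theorem~\ref{thm:PPAxiom} rather than Theorem~\ref{thm:PPAxiom-nontractable}. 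Both routes rely on the same special feature: the mapping spaces used to define $\C$-locality land in $\M$ itself, so the internal adjunction $\Map(X\wedge K,-)\cong \Map(K,\Map(X,-))$ is available. The paper explicitly remarks afterward that this is exactly why the argument fails for spectra.
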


\begin{proof}
For a review of the monoidal model structures on spaces and simplicial sets see Chapter 4 of \cite{hovey-book}. Both are cellular, left proper, monoidal model categories with cofibrant objects flat and the domains of the generating cofibrations cofibrant.

For $\M=sSet$ we can simply rely on Theorem 4.1.1 of \cite{hirschhorn}, which guarantees that $L_\C(\M)$ is a simplicial model category. The pushout product axiom is equivalent to the SM7 axiom for $sSet$, so this proves $L_\C(\M)$ is a monoidal model category and hence that $L_\C$ is monoidal. There is also an elementary proof of this fact, obtained from the proof below by replacing $F(-,-)$ everywhere by $\map(-,-)$.

We turn now to $\M=Top$. By definition, any Bousfield localization $L_\C$ will be a monoidal Bousfield localization as soon as we show $\C \wedge S^n_+$ is contained in the $\C$-local equivalences (the codomains of the generating cofibrations are contractible, so do not matter). As remarked in the discussion below Definition 4.1 in \cite{hovey-white}, for topological model categories Bousfield localization with respect to a set of cofibrations can be defined using topological mapping spaces rather than simplicial mapping spaces (at least when all maps in $\C$ are cofibrations). Let $F(X,Y)$ denote the space of based maps $X\to Y$.

We will make use of Proposition 3.2 in \cite{hovey-spectra}, a version of which states that because $Top$ is left proper and cofibrantly generated, a map $f$ is a weak local equivalence if and only if $F(T,f)$ is a weak equivalence of topological spaces for all $T$ in the (co)domains of the generating cofibrations $I$ in $Top$.

Now consider the following equivalent statements, where $T$ runs through the domains and codomains of generating cofibrations.

$f \mbox{ is a } \C \mbox{-local equivalence} $\\
$\mbox{iff } F(f,Z) \mbox{ is a weak equivalence for all }\C \mbox{-local } Z $\\
$\mbox{iff } F(T,F(f,Z)) \mbox{ is a weak equivalence for all }\C \mbox{-local Z and all } T$ (by Prop. 3.2)  \\
$\mbox{iff } F(T \wedge f,Z) \mbox{ is a weak equivalence for all }\C \mbox{-local } Z $ (by adjointness)\\
$\mbox{iff } T \wedge f \mbox{ is a } \C \mbox{-local equivalence}$

This proves that the class of $\C$-local equivalences is closed under smashing with a domain or codomain of a generating cofibration, so $L_\C$ is a monoidal Bousfield localization.

\end{proof}

The reader may wonder whether all Bousfield localizations preserve algebras over cofibrant operads in general model categories $\M$, i.e. whether all Bousfield localizations are monoidal. This is false, as demonstrated by the following example, from  Section 6 in \cite{CGMV}.

\begin{example} \label{example:postnikov}
Let $\M$ be symmetric spectra, orthogonal spectra, or $\mathbb{S}$-modules. Recall that in topological spaces, the $n^{th}$ Postnikov section functor $P_n$ is the Bousfield localization $L_f$ corresponding to the map $\Sigma f$ where $f:S^n \to *$. Applying $\Sigma^\infty$ gives a map of spectra and we again denote by $P_n$ the Bousfield localization with respect to this map. The Bousfield localization $P_{-1}$ on $\M$ does not preserve $A_\infty$-algebras. If $R$ is a non-connective $A_\infty$-algebra then the unit map $\nu:S\to P_{-1}R$ is null because $\pi_0(P_{-1}R)=0$. Thus, $P_{-1}R$ cannot admit a ring spectrum structure (not even up to homotopy) because $S\wedge P_{-1}R \to P_{-1}R \wedge P_{-1}R \to P_{-1}R$ is not a homotopy equivalence as it would have to be for $P_{-1}R$ to be a homotopy ring. It follows that the model category $P_{-1}\M$ fails the pushout product axiom, because if $P_{-1}\M$ satisfied the pushout product axiom, then $A_\infty$-algebras in  $P_{-1}\M$ would inherit a transferred semi-model structure, by Theorem \ref{thm:spitzweck}, and Corollary \ref{cor:loc-pres-alg-for-semi} would imply that $P_{-1}$ preserves $A_\infty$-algebras.
\end{example}

In \cite{CGMV}, examples of the sort above are prohibited by assuming that $L$-equivalences are closed under the monoidal product. It is then shown in Theorem 6.5 that for symmetric spectra this property is implied if the localization is \textit{stable}, i.e. $L \circ \Sigma \simeq \Sigma \circ L$. We now compare our requirement that $L_\C$ be a monoidal Bousfield localization to existing results regarding preservation of monoidal structure.

\begin{proposition} \label{prop:stable-implies-monoidal}
Let $\M$ be a stable model category. Then every monoidal Bousfield localization is stable. In a monogenic setting such as spectra, every stable localization is monoidal.
\end{proposition}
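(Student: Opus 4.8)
The plan is to prove the two implications separately, in each case by translating the structural property into a statement about $\C$-local equivalences so that Lemma \ref{lemma:left-Quillen-and-loc} can be applied.

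For the first claim, suppose $L_\C$ is a monoidal Bousfield localization. Stability means the suspension functor $\Sigma = S^1 \otimes -$ (or its spectral analogue) descends to a self-equivalence of $\Ho(L_\C(\M))$. First I would note that $S^1$ is cofibrant, so by Definition \ref{defn-monoidal-localization} cofibrant objects are flat in $L_\C(\M)$; hence $\Sigma = S^1 \otimes -$ sends $\C$-local equivalences to $\C$-local equivalences, and by Lemma \ref{lemma:left-Quillen-and-loc} it is left Quillen on $L_\C(\M)$. Since $\M$ is stable, $\Sigma$ is already a Quillen equivalence on $\M$ with inverse $\Omega$; one checks that the derived adjunction $(\Sigma, \Omega)$ on $\Ho(\M)$ still has unit and counit that are isomorphisms after passing to $\Ho(L_\C(\M))$, because $\C$-local equivalences are a larger class of weak equivalences than the original ones. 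Thus $\mathbb{L}\Sigma$ remains an equivalence on $\Ho(L_\C(\M))$, i.e. $L_\C$ is stable. (Alternatively, one invokes that $\Ho(L_\C(\M))$ is a localization of the triangulated category $\Ho(\M)$ at a set of objects, and such localizations of triangulated categories are automatically triangulated, hence $\Sigma$ is invertible.)

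For the converse, work in a monogenic stable setting: $\Ho(\M)$ is a monogenic axiomatic stable homotopy category in the sense of \cite{hovey-palmieri-strickland}, generated by the unit $S$, with the smash product the monoidal structure. Suppose $L_\C$ is stable. I want to show that for every $f \in \C$ and every cofibrant $K$, the map $f \otimes \id_K$ is a $\C$-local equivalence, which by Theorem \ref{thm:PPAxiom-nontractable} gives that $L_\C$ is monoidal. The class $\E$ of objects $X$ of $\Ho(\M)$ such that $f \otimes X$ is a $\C$-local equivalence (for all $f \in \C$ simultaneously, or one $f$ at a time) is closed under the operations that define a localizing subcategory: it is closed under suspension and desuspension precisely because $L_\C$ is stable (smashing with $f$ commutes with $\Sigma$, and $\Sigma$ both preserves and reflects $\C$-local equivalences since it is invertible on $\Ho(L_\C(\M))$); it is closed under cofiber sequences because $\Ho(L_\C(\M))$ is triangulated and a map between two vertices of a triangle whose third vertex is a triangle-map that is an iso forces the map to be an iso — more precisely, smashing a triangle with $f$ gives a map of triangles, two of whose components are $\C$-local equivalences, so the third is too; and it is closed under retracts and arbitrary coproducts. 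Since $f$ itself is a $\C$-local equivalence, $S \in \E$ (smashing with the unit). By monogenicity, the smallest localizing subcategory containing $S$ is all of $\Ho(\M)$, so $\E = \Ho(\M)$. In particular every cofibrant $K$ lies in $\E$, giving the result.

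The main obstacle I anticipate is making the converse argument fully rigorous at the level of model categories rather than homotopy categories: one must be careful that "$f \otimes X$ is a $\C$-local equivalence" behaves well under (homotopy) cofiber sequences and infinite coproducts, and that the generators-argument via \cite{hovey-palmieri-strickland} genuinely reaches all cofibrant objects and not merely a thick subcategory. The cleanest route is to phrase everything in $\Ho(L_\C(\M))$, which is triangulated and monogenic as a localization of $\Ho(\M)$, observe that the localization functor $\M \to L_\C(\M)$ sends smashing-with-$f$ to an equivalence because $f$ is a $\C$-local equivalence and cofibrant-flatness would be what we are trying to prove — so instead one argues directly that the kernel of the localization is a thick $\otimes$-ideal containing the generator, hence everything, which is exactly the content of the monogenic hypothesis. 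I would then cite Theorem 6.5 of \cite{CGMV} for the case of symmetric spectra as confirmation that the monoidal-equals-stable equivalence is the expected one.
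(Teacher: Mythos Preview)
Your proof of the first implication is essentially the paper's: both observe that $\Sigma$ is smashing with a cofibrant sphere and hence preserves $\C$-local equivalences once cofibrant objects are flat in $L_\C(\M)$. For the converse the routes diverge. The paper gives a one-line argument: in spectra the (co)domains of the generating cofibrations are (de)suspensions of the unit sphere, so the criterion of Theorem~\ref{thm:PPAxiom} collapses directly to the statement that (de)suspension preserves $\C$-local equivalences, which is exactly stability. Your localizing-subcategory argument via the class $\E$ is correct and is more general---it works in any monogenic stable homotopy category in the sense of \cite{hovey-palmieri-strickland}, without needing the generating cofibrations to literally have spheres as (co)domains---but it is considerably more elaborate than what the paper intends as an obvious remark. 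One caution: the alternative sketch in your final paragraph, about the kernel of the localization being a thick $\otimes$-ideal containing the generator, is garbled (the kernel contains the generator only if the localization is trivial); drop it and rely on the $\E$-argument, which is already complete.
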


This is clear, since suspending is the same as smashing with the suspension of the unit sphere. The Postnikov section is clearly not stable, and indeed the counterexample above hinges on the fact that the section has truncated the spectrum by making trivial the degree in which the unit must live. Stable localizations preserve cofiber sequences, but $P_{-1}$ does not. Under the hypothesis that localization respects the monoidal product, Theorem 6.1 of \cite{CGMV} proves that cofibrant algebras over a cofibrant colored operad valued in $sSet_*$ or $Top_*$ are preserved. Theorem \ref{bigthm} recovers this result in the case of operads, and improves on it by extending the class of operads so that they do not need to be valued in $sSet_*$ or $Top_*$, by discussing preservation of non-cofibrant algebras, by weakening the cofibrancy required of the operad to $\Sigma$-cofibrancy (using Theorem \ref{thm:spitzweck} above), and by potentially weakening the hypothesis on the localization. A different generalization of \cite{CGMV} has been given in \cite{gutierrez-rondigs-spitzweck-ostvaer}. 

\begin{proposition}
Every Bousfield localization for which the local equivalences are closed under $\otimes$ is a monoidal Bousfield localization, but the converse fails.
\end{proposition}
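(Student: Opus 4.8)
The plan is to get the forward implication essentially for free from Theorem~\ref{thm:PPAxiom-nontractable}, and to defeat the converse with a trivial localization on a monoidal model category that contains a non-flat object.

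First I would unpack the hypothesis ``the $\C$-local equivalences are closed under $\otimes$'' as: whenever $g$ is a $\C$-local equivalence and $X$ is any object, $g\otimes \mathrm{id}_X$ is a $\C$-local equivalence. (This is the condition used in \cite{CGMV}; it is equivalent to asking that $f\otimes g$ be a $\C$-local equivalence for every pair $f,g$ of $\C$-local equivalences, since identities are $\C$-local equivalences and the class of $\C$-local equivalences has the two-out-of-three property.) Since every $f\in\C$ is in particular a $\C$-local equivalence, this forces $f\otimes \mathrm{id}_K$ to be a $\C$-local equivalence for every cofibrant $K$. Under the standing hypotheses of this section --- in particular that $\M$ is a cofibrantly generated monoidal model category in which cofibrant objects are flat --- Theorem~\ref{thm:PPAxiom-nontractable} then applies verbatim and yields that $L_\C$ is a monoidal Bousfield localization. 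There is no further work here.

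For the converse I would take $\M=\Ch(\Z)$ with the projective model structure --- a monoidal model category in which cofibrant objects are flat, since cofibrant complexes are $K$-flat --- and $\C=\emptyset$, so that $L_\C(\M)=\M$ and $L_\C$ is vacuously a monoidal Bousfield localization. The $\C$-local equivalences are now precisely the quasi-isomorphisms, and I would exhibit their failure to be closed under $\otimes$ explicitly: the quasi-isomorphism $(\Z \xrightarrow{2} \Z)\to \Z/2$ (source in homological degrees $1$ and $0$, target in degree $0$) becomes, after applying $-\otimes_{\Z}\Z/2$, the map $(\Z/2 \xrightarrow{0} \Z/2)\to \Z/2$, and this is not a quasi-isomorphism because the source has $H_1=\Z/2\neq 0$. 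So $L_\C$ is a monoidal Bousfield localization whose local equivalences are not closed under $\otimes$, proving the converse fails. If a nontrivial localization is wanted, the same obstruction can be produced in symmetric spectra with the stable model structure, using a spectrum that is not flat.

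The forward direction is routine once Theorem~\ref{thm:PPAxiom-nontractable} is available, so the only real ``obstacle'' is the counterexample: one must check that the ambient category genuinely satisfies the monoidal model category axioms with cofibrant objects flat (so the trivial localization legitimately counts as a monoidal Bousfield localization) while still containing a non-flat object to witness the failure of $\otimes$-closure. For $\Ch(\Z)$ both facts are standard, so nothing deeper is required.
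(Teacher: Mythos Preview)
Your proof is correct and follows the same strategy as the paper. For the forward direction, both you and the paper observe that identities are $\C$-local equivalences, so closure under $\otimes$ immediately gives $f\otimes\mathrm{id}_K$ a $\C$-local equivalence for $f\in\C$ and cofibrant $K$, and then invoke the characterization theorem; for the converse, both use a trivial localization on a monoidal model category whose weak equivalences are not closed under $\otimes$ --- the paper phrases this schematically (take $\C$ to be the generating trivial cofibrations of any such $\M$), whereas you instantiate it concretely with $\M=\Ch(\Z)$ and $\C=\emptyset$ together with the standard $\Z/2$ counterexample.
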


\begin{proof}
To see why this fact is true, consider the maps $id_K$ as $L$-equivalences when testing whether or not $id_K\otimes \C$ is a $\C$-local equivalence. To see that the converse fails, take $\C$ to be the generating trivial cofibrations of any cofibrantly generated model category in which the weak equivalences are not closed under $\otimes$. 
\end{proof}

Thus, our hypothesis on a monoidal Bousfield localization is strictly weaker than requiring $L$-equivalences to be closed under $\otimes$. Theorems \ref{thm:PPAxiom} and \ref{thm:PPAxiom-nontractable} demonstrate that the hypothesis that $\C \otimes id_K$ is contained in the $\C$-local equivalences is best-possible, since it $L_\C$ is a monoidal Bousfield localization if and only if this property holds, and without the pushout product axiom on $L_\C(\M)$ the question of preservation of algebras under localization is not even well-posed. Note that cofibrant objects are flat for symmetric spectra by 5.3.10 in \cite{hovey-shipley-smith}.

\begin{remark}
In light of the Postnikov Section example, the argument of Proposition \ref{prop:monoidal-loc-spaces} must break down for spectra. The precise place where the argument fails is the passage through $\map(T,\map(f,Z))$. In spectra, this expression has no meaning, because $T$ is a spectrum but $\map(f,Z)$ is a space. So the argument of Proposition \ref{prop:monoidal-loc-spaces} relies precisely on the fact that $\M=sSet$ (or $\M=Top$ in the topological case), so that the SM7 axiom for $\M$ is precisely the same as the pushout product axiom.
\end{remark}

Theorem \ref{bigthm} and Theorem \ref{thm:spitzweck} combine to prove that any monoidal Bousfield localization of spectra preserves $A_\infty$ and $E_\infty$-algebras. In particular, $A_\infty$ and $E_\infty$-algebras are preserved by stable Bousfield localizations such as $L_E$ where $E$ is a homology theory. So our results recover Theorems VIII.2.1 and VIII.2.2 of \cite{EKMM}.

\subsection{Equivariant Spectra}

In order to specialize Corollary \ref{cor:loc-pres-alg-for-semi} to the case of equivariant spectra, where $G$ is a compact Lie group, we must first understand the generating cofibrations. For $Top^G$, the (co)domains of maps in $I$ take the form $((G/H) \times S^{n-1})_{+}$ and $((G/H) \times D^{n})_{+}$ for $H$ a closed subgroup of $G$, by Definition 1.1 in \cite{mandell-may-equivariant}. For $G$-spectra, we first need a new piece of notation. For any finite dimensional orthogonal $G$-representation $W$ there is an evaluation functor $Ev_W:\Sp^G \to Top^G$. This functor has a left adjoint $F_W$ (see Proposition 3.1 in \cite{hovey-white} for more details). The (co)domains of maps in $I$ take the form $F_W((G/H)_+ \wedge S^{n-1}_+)$ and $F_W((G/H)_+ \wedge D^{n}_+)$ by Definition 1.11 in \cite{mandell-may-equivariant}, where $W$ runs through some fixed $G$-universe $\cat{U}$. The latter are contractible, and so smashing with them does not make a difference. Observe that the domains of the generating cofibrations are cofibrant. Left Bousfield localization yields the {\em stable model structure}, which we denote $\Sp^G$. That $\Sp^G$ is a monoidal model category with cofibrant objects flat is verified in Proposition III.7.3 of \cite{mandell-may-equivariant}, and may also be deduced from Corollary 4.4 in \cite{hovey-white}. We could also work with the {\em positive stable model structure} $\Sp^G_+$, which has the same weak equivalences as $\Sp^G$, but cofibrations defined by functors $F_W$ where $W^G \neq 0$. The proof that these model structures are left proper and cellular can be found in the appendix of \cite{gutierrez-white-equivariant}.
For $\M=\Sp^G$ or $\Sp^G_+$, our preservation result (Corollary \ref{cor:loc-pres-alg-for-semi} together with Theorem \ref{thm:PPAxiom}) becomes:

\begin{theorem} \label{thm:localization-preservation-families} Let $G$ be a compact Lie group. In $\Sp^G$ (resp. $\Sp^G_+$), a Bousfield localizations $L_\C$ is monoidal if and only if $\C \wedge F_W((G/H)_+ \wedge S^{n-1}_+)$ is a $\C$-local equivalence for all closed subgroups $H$ of $G$, for all $W$ in the universe (resp. all $W$ such that $W^G\neq 0$), and for all $n$ (resp. $n>0$). Furthermore, such localizations preserve $P$-algebra structure for any $\Sigma$-cofibrant $P$, including any equivariant $E_\infty$-operad $P$.
\end{theorem}

Here a $G$-operad $P$ is called {\em equivariant $E_\infty$} if it is $\Sigma$-free, the spaces $P(n)$ are $G$-CW complexes, and $P(n)^H \simeq \ast$ for all closed subgroups $H$ of $G$. These operads are $\Sigma$-cofibrant with respect to the model structure on $G$-operads transferred from the model structure on $G$-collections $\prod_{n\geq 0}\limits (Top^G)^{\Sigma_n} $ where a morphism $f = (f_n)$ is a weak equivalence (resp. fibration) if $f_n^H$ is a weak equivalence (resp. fibration) in $Top$ for every closed subgroup $H$ of $G$ and every $n$. Note that these operads do {\em not} encode genuine equivariant commutativity. To do that, subgroups of $G\times \Sigma_n$ would need to be considered. In particular, there is not a Quillen equivalence between algebras over an $E_\infty$-operad and commutative equivariant ring spectra. The $N_\infty$-operads of \cite{blumberg-hill} were introduced to encode genuine commutativity in a homotopy coherent way (relative to a choice of a collection of families of subgroups of $G\times \Sigma_n$ for $n\geq 0$) and were constructed in \cite{gutierrez-white-equivariant} as cofibrant replacements of the operad $Com$ in various model structures on the category of $G$-operads, corresponding to the choice of a collection of families of subgroups of $G\times \Sigma_n$.

%Note that this is different from the notion of a cellular $E_\infty$-$G$-operad in the language of \cite{lewis-may-steinberger}. This means the $G$-spaces $P(n)$ are all $G\times \Sigma_n$-CW spaces that are universal spaces of principal $(G,\Sigma_n)$-bundles. These correspond to working with all subgroups of $G\times \Sigma_n$, not just with subgroups of the form $H \times 1$.

Ignoring suspensions, Theorem \ref{thm:localization-preservation-families} demonstrates that monoidal Bousfield localizations are precisely the ones for which $L_\C$ respects smashing with $(G/H)_+$ for all subgroups $H$. In this light, Theorem \ref{thm:localization-preservation-families} can be seen as a generalization of Proposition \ref{prop:stable-implies-monoidal}, saying that if $L_\C$ respects stabilization with respect to all the objects  $F_W((G/H)_+ \wedge S^{n-1}_+$ then $L_\C$ is monoidal. We think of these monoidal localizations as the ones that can `see' the information of all subgroups. A natural question is: what if $L_\C$ can only `see' the information of some subgroups $H$? To answer this question, we must consider the following model structures, from in Theorem 6.3 in \cite{mandell-may-equivariant} (on spectra either the stable or positive stable model structure can be used):

\begin{defn}
Let $\sF$ be a family of closed subgroups of $G$, i.e. a non-empty set of subgroups closed under conjugation and taking subgroups. Then the \textit{$\sF$-fixed point model structure} on pointed $G$-spaces is a cofibrantly generated model structure in which a map $f$ is a weak equivalence (resp. fibration) if and only if $f^H$ is a weak equivalence (resp. fibration) in $Top$ for all $H \in \sF$. We will denote this model structure by $Top^\sF$. The generating (trivial) cofibrations are $(G/H \times g)_+$, where $g$ is a generating (trivial) cofibration of topological spaces, and $H\in \sF$.

The corresponding cofibrantly generated model structure on $G$-spectra will be denoted $\Sp^\sF$. Again, weak equivalences (resp. fibrations) are maps $f$ such that $f^H$ is a weak equivalence (resp. fibration) of orthogonal spectra for all $H\in \sF$. The generating (trivial) cofibrations are $F_W((G/H)_+ \wedge g)$ as $H$ runs through $\sF$, $g$ runs through the generating (trivial) cofibrations of spaces, and $W$ runs through some $G$-universe $\cat{U}$.
\end{defn}

With the generating cofibrations in hand, Theorem \ref{thm:PPAxiom} implies that monoidal Bousfield localizations in $\Sp^\sF$ are characterized by the property that $\C \wedge (G/H)_+$ is a $\C$-local equivalence for all $H \in \sF$ (again, ignoring suspensions). One can also define $\sF$-fixed point semi-model structures $Oper^\sF$ on the category of $G$-operads by applying the general machinery of Theorem 12.2.A in \cite{fresse-book}. Indeed, \cite{gutierrez-white-equivariant} demonstrates how to define full model structures on these categories of operads, for even more general families of subgroups.

\begin{defn}
Let $E_\infty^\sF$ be the cofibrant replacement for the operad $Com$ in the $\sF$-fixed point semi-model structure on $G$-operads.
\end{defn}

These operads form a lattice (ordered by family inclusion) interpolating between non-equivariant $E_\infty$ (corresponding to the family $\sF=\{e\}$) and equivariant $E_\infty$ (corresponding to the family $\sF=\{All\}$ and denoted $E_\infty^G$). An $E_\infty^\sF$-algebra $X$ has a multiplicative structure on $res_H(X)$ (compatible with the transfers) for every $H\in \sF$. However, $N_H^G(res_H(X))$ need not have a multiplicative structure. These operads $E_\infty^\sF$ have been generalized and further studied in \cite{gutierrez-white-equivariant}, which includes a comparison between these operads and the $N_\infty$-operads of \cite{blumberg-hill}, results about transferred model structures, and rectification results. For now we will focus on how $E_\infty^\sF$-algebra structure interacts with Bousfield localization. First, observe that both $Oper^\sF$ and $\Sp^\sF$ are $Top^\sF$-model structures (in the sense of Definition 4.2.18 in \cite{hovey-book}) and the cofibrancy of $E_\infty^\sF$ is relative to the $\sF$-model structure. Thus, from a model category theoretic standpoint, $E_\infty^\sF$-algebras are best viewed in $\Sp^\sF$. The following two theorems also have formulations for the positive stable model structure, in analogy with Theorem \ref{thm:localization-preservation-families}, that we leave to the reader.

\begin{theorem}
Let $\M = \Sp^G$ and let $\sF$ be a family of closed subgroups of $G$. Assume $F_W((G/H)_+ \wedge S^{n-1}_+) \wedge \C$ is contained in the $\C$-local equivalences for all $H\in \sF$, for all $n$, and for all $W$ in the universe. Then $L_\C$ preserves $E_\infty^{\sF}$-structure.
\end{theorem}

Localizations of the form above are $\sF$-monoidal but not necessarily $G$-monoidal. For this reason, when $X\in E_\infty^G$-alg, $L_\C(X)$ has $E_\infty^\sF$-algebra structure but may not have $E_\infty^G$-algebra structure. More generally, we have the following result, encoding the fact that if we work in $\Sp^\sK$ rather than $\Sp^G$, then localizations are compatible with both $\sK$ and $\sF$. Because there are now two families involved, the localization will preserve algebraic structure corresponding to the meet of these two families in the lattice of families.

\begin{theorem} \label{thm:preservation-families}
Let $\M$ be the $\sK$-fixed point model structure on $G$-spectra and let $\sK'$ be a subfamily of $\sK$. Assume $F_W((G/H)_+ \wedge S^{n-1}_+) \wedge \C$ is contained in the $\C$-local equivalences for all $H\in \sK'$, for all $n$, and for all $W$ in the universe. Then $L_\C$ takes any $E_\infty^\sF$-algebra to an $E_\infty^{\sF \cap \sK'}$-algebra.
\end{theorem}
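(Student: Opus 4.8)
The plan is to deduce the statement from Corollary \ref{cor:loc-pres-alg-for-semi} applied to the operad $E_\infty^{\sF\cap\sK'}$, after a trivial reduction. First observe that $\sF\cap\sK'$ is again a family of closed subgroups of $G$: it is non-empty since $\{e\}$ lies in both $\sF$ and $\sK'$, and it is closed under conjugation and passage to subgroups. Hence $E_\infty^{\sF\cap\sK'}$ is defined, and since $\sF\cap\sK'\subseteq\sF$ there is a map of operads $E_\infty^{\sF\cap\sK'}\to E_\infty^\sF$; via this map every $E_\infty^\sF$-algebra is in particular an $E_\infty^{\sF\cap\sK'}$-algebra. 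So it suffices to show that $L_\C$ preserves $E_\infty^{\sF\cap\sK'}$-algebras in the sense of the definition of preservation: then $L_\C$ carries the given $E_\infty^\sF$-algebra to something weakly equivalent to an $E_\infty^{\sF\cap\sK'}$-algebra, which is exactly the conclusion.

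To establish this I would run the argument in the ambient monoidal model category $\Sp^{\sF\cap\sK'}$ (equivalently one could use $\Sp^{\sK'}$); this is where the cofibrancy of $E_\infty^{\sF\cap\sK'}$ lives, and it is the $\sF\cap\sK'$-analogue of the remark preceding the theorem that $E_\infty^\sF$-algebras are best viewed in $\Sp^\sF$. The generating cofibrations of $\Sp^{\sF\cap\sK'}$ have as domains and codomains the objects $F_W((G/H)_+\wedge S^{n-1}_+)$ with $H\in\sF\cap\sK'$, together with the contractible objects $F_W((G/H)_+\wedge D^n_+)$, which are irrelevant for smashing. Since $\sF\cap\sK'\subseteq\sK'$, the hypothesis supplies precisely that $F_W((G/H)_+\wedge S^{n-1}_+)\wedge\C$ lies in the $\C$-local equivalences for every such $H$, $n$, $W$, so Theorem \ref{thm:PPAxiom} shows $L_\C(\Sp^{\sF\cap\sK'})$ is a monoidal model category. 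The operad $E_\infty^{\sF\cap\sK'}$, being the cofibrant replacement of $Com$ in the $(\sF\cap\sK')$-fixed point model structure on $G$-operads, is $\Sigma$-cofibrant, so Theorem \ref{thm:spitzweck} endows $E_\infty^{\sF\cap\sK'}$-algebras with inherited semi-model structures over both $\Sp^{\sF\cap\sK'}$ and $L_\C(\Sp^{\sF\cap\sK'})$. Corollary \ref{cor:loc-pres-alg-for-semi} then gives that $L_\C$ preserves $E_\infty^{\sF\cap\sK'}$-algebras, completing the argument; the second clause of preservation (lifting the localization map for cofibrant algebras) comes along for free from the same corollary.

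The step I expect to require the most care is the bookkeeping of ambient model structures: the hypothesis, the functor $L_\C$, and the input $E_\infty^\sF$-algebra are phrased for $\M=\Sp^\sK$, whereas to invoke Theorem \ref{thm:PPAxiom} one wants the coarser structure $\Sp^{\sF\cap\sK'}$ (or $\Sp^{\sK'}$), so one must check that the phrases ``$\C$-local equivalence'', ``monoidal Bousfield localization'' and ``$\Sigma$-cofibrant operad'' are read consistently across $\Sp^\sK$, $\Sp^{\sF\cap\sK'}$ and their localizations. In particular, $\Sigma$-cofibrancy relative to the finer $(\sF\cap\sK')$-structure on operads implies $\Sigma$-cofibrancy relative to the coarser ones, since the finer structure has more trivial fibrations; and one must confirm that the localization $L_\C$ computed in $\Sp^{\sF\cap\sK'}$ represents the intended derived functor on the relevant $P$-algebras. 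Since $\Sp^\sK$ and $\Sp^{\sF\cap\sK'}$ share an underlying category with compatible (co)fibrant replacements and mapping-space machinery, this is a matter of care rather than a new difficulty, but it is the point at which the proof must be written out precisely.
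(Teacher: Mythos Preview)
Your proposal is correct and follows essentially the same route as the paper: forget to the ambient model structure $\Sp^{\sF\cap\sK'}$, observe that any $E_\infty^\sF$-algebra is in particular an $E_\infty^{\sF\cap\sK'}$-algebra, use the hypothesis to verify that $L_\C$ is a monoidal Bousfield localization there, and then invoke Corollary~\ref{cor:loc-pres-alg-for-semi} via Theorem~\ref{thm:spitzweck}. The paper's proof is terser and does not explicitly spell out the bookkeeping concerns you flag in your final paragraph; your caution there is well placed but, as you suspect, it is a matter of care rather than a genuine obstacle.
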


\begin{proof}
In order to apply Corollary \ref{cor:loc-pres-alg-for-semi}, first forget to the model structure $\Sp^{\sF \cap \sK'}$ and observe that any $E_\infty^\sF$-algebra is sent to a $E_\infty^{\sF \cap \sK'}$-algebra. The hypothesis on $L_\C$ guarantees that $L_\C$ is a monoidal Bousfield localization with respect to the $\sF\cap \sK'$ model structure, and so $E_\infty^{\sF \cap \sK'}$ is preserved.
\end{proof}

\begin{remark} \label{remark:localization-reduces-in-lattice}
It is easy to produce examples of localizations $L_\C$ that reduce $E_\infty^{\sF}$-algebra structure to  $E_\infty^{\sF'}$-algebra structure for any families $\sF' \subsetneq \sF$ of closed subgroups of $G$, by generalizing the Postnikov section \ref{example:postnikov}. For every closed subgroup $H \in \sF \setminus \sF'$, consider a truncation of the spectrum $F_W((G/H)_+ \wedge S^{n-1}_+)$. Localizing with respect to the wedge of these truncation maps will take $E_\infty^\sF$-algebras to $E_\infty^{\sF'}$-algebras, by Theorem \ref{thm:preservation-families}.
\end{remark}

Together, Theorem \ref{thm:preservation-families} and Remark \ref{remark:localization-reduces-in-lattice} resolve the preservation question for operads in the lattice $E_\infty^\sF$. As expected, preservation of lesser algebraic structure comes down to requiring a less stringent condition on the Bousfield localization. The least stringent condition is for $\sF = \{e\}$ and recovers the notion of a stable localization (i.e. one which is monoidal on the category of spectra after forgetting the $G$-action). However, because none of the operads $E_\infty^\sF$ rectify with respect to the $Com$ operad, we do not have preservation results for commutative equivariant ring spectra. For the remainder of the section, we discuss localizations that preserve $E_\infty^\sF$-algebra structure but fail to preserve commutative structure. We begin with the example that motivated this paper, which the author learned from a talk given by Mike Hill at Oberwolfach (the proceedings can be found in \cite{oberwolfach}). A similar example appeared in \cite{mcclure-E-infty-Tate}. Before presenting this motivating example, we must introduce some new notation. 

We have already seen that, given a $G$-space $X$ and a closed subgroup $H$, one may restrict the $G$ action to $H$ and obtain an $H$-space denoted $res_H(X)$. This association is functorial and lifts to a functor $res_H:\Sp^G \to \Sp^H$. This \textit{restriction functor} has a left adjoint $G_+ \wedge_H (-)$, the \textit{induction functor}. We refer the reader to Section 2.2.4 of \cite{kervaire} for more details. If one shifts focus to commutative monoids $Comm_G$ in $\Sp^G$ (equivalently to genuine $E_\infty$-algebras) then there is again a restriction functor $res_H:Comm_G \to Comm_H$ and it again has a left adjoint functor $N_H^G(-)$ called the \textit{norm}. This functor is discussed in Section 2.3.2 of \cite{kervaire}. We are now prepared to present the example from \cite{oberwolfach}, which will be generalized in Example \ref{example:hill-ober-version} below.

\begin{example} \label{example:hill}
There are localizations that destroy genuine commutative structure but that preserve equivariant $E_\infty$-algebra structure. For this example, let $G$ be a (non-trivial) finite group. Consider the reduced real regular representation $\overline{\rho}$ obtained by taking the quotient of the real regular representation $\rho$ by the trivial representation. We write $\overline{\rho}_G=\rho_G - 1$ where 1 means the trivial representation $\R[e]$. Taking the one-point compactification of this representation yields a representation sphere $S^{\overline{\rho}}$. There is a natural inclusion $a_{\overline{\rho}}:S^0\to S^{\overline{\rho}}$ induced by the inclusion of the trivial representation into $\overline{\rho}$. Consider the spectrum $E=\mathbb{S}[a_{\overline{\rho}}^{-1}]$ obtained from the unit $\mathbb{S}$ (certainly a commutative algebra in $\Sp^G$) by localization with respect to $a_{\overline{\rho}}$. We will show that this spectrum cannot be commutative. 

First, for any proper $H < G$, the restriction $\rho_G |_H$ is $[G:H]\rho_H$, so $\overline{\rho}_G |_H = [G:H] \overline{\rho}_H + ([G:H]1 - 1)$. Let $k=[G:H]-1$. Observe that $res_H S^{\overline{\rho}_G} = (S^{\overline{\rho}_H})^{\#[G:H]} \wedge S^k$. It follows that $res_H(E)$ is contractible, because $k > 0$.
% This means that as an $H$-spectrum it is contractible, because there is enough space in the $S^k$ part to deform it to a point. Note, however, that $E$ itself is not locally trivial, since the only fixed points of $a_{\overline{\rho}}$ are 0 and $\infty$, so the map $a_{\overline{\rho}}$ is not equivariantly trivial.

If $E$ were a commutative equivariant ring spectrum, then the counit of the norm-restriction adjunction would provide a ring homomorphism $N_H^G res_H(E)\to E$. But the domain is contractible for every proper subgroup $H$ because $res_H(E)$ is contractible. This cannot be a ring map unless $E$ to be contractible, and we know $E$ is not contractible, because its $G$-fixed points are not contractible.
%the only fixed points of $a_{\overline{\rho}}$ are 0 and $\infty$.
\end{example}

Justin Noel has pointed out that the localization in Example \ref{example:hill} is smashing, hence monoidal (this is clear from the reformulation in Example \ref{example:hill-ober-version}, but was not clear to the author from the formulation above). It follows that the localization preserves $E_\infty$-algebra structure, by Theorem \ref{thm:localization-preservation-families}, and hence takes commutative monoids to $E_\infty$-algebras. Because \textit{any} such $H$ will lead to a failure of $L(\mathbb{S}) = E$ to be commutative, Example \ref{example:hill} is in some sense maximally bad. We now leverage this observation to generalize Example \ref{example:hill}.

Recall from Definition 5.7 of \cite{gutierrez-white-equivariant} that an $\sF$-$N_\infty$-operad $P$ is a generalization of an $N_\infty$-operad, where $P$-algebras have multiplicative norms for all $H\in \sF$. The formulation of Example \ref{example:hill-ober-version} matches the presentation from \cite{oberwolfach} for the case when $G$ is finite, $\sF$ is the family of all subgroups of $G$, and $\sP$ the family of proper subgroups.

\begin{example} \label{example:hill-ober-version}
Let $G$ be a compact Lie group and $\sF$ a family of closed subgroups of $G$. If $X$ is an algebra over an $\sF-N_\infty$-operad then there is a localization $L$ sending $X$ to an $E_\infty^\sF$-algebra. Consider the cofiber sequence $E\sP_+ \to S^0 \to \tilde{E}\sP$ for any family $\sP$ properly contained in $\sF$. Recall the fixed-point property of the space $E\sP$ (discussed very nicely in Section 7 of \cite{schwede-equivariant-lectures}) and deduce:
\[ 
(E\sP_+)^H \simeq  \begin{cases} 
      \ast_+ = S^0 & \text{ if } H\in \sP\\
      \emptyset_+ = \ast & \text{ if } H \notin \sP 
   \end{cases}
\]

For all $H$, the $H$-fixed points of $S^0$ are $S^0$, so that the cofiber obtained by mapping this space into $S^0$ satisfies the following fixed-point property
\[ 
(\tilde{E}\sP)^H \simeq  \begin{cases} 
      \ast & \text{ if } H\in \sP\\
      S^0 & \text{ if } H \notin \sP 
   \end{cases}
\]

Now apply $\Sigma^\infty_+$ to the map $S^0 \to \tilde{E}\sP$. If $G$ is a finite group, and $\sF$ is the family of all subgroups of $G$, then the resulting map $\mathbb{S} \to E$ is the same localization map considered in Example \ref{example:hill} (see Section 7 of \cite{schwede-equivariant-lectures}). Returning to the general case, note that $E$ is not contractible because $E\sP_+$ is not homotopy equivalent to $S^0$ (since $\sP$ is properly contained in $\sF$), though $res_H(E\sP_+)$ \textit{is} homotopy equivalent to $res_H(S^0)$ for any $H\in \sP$. In this formulation it is clear that the map $\mathbb{S}\to E$ is a nullification that kills all maps out of the induced cells $G_+ \wedge_H (H/K)_+ \cong (G/H)_+$ for all $H\in \sP$. %With the characterization of monoidal Bousfield localizations in $\Sp^\sF$, we can see that in order to produce a localization sending $E_\infty^\sF$-algebras to naive $E_\infty$-algebras one need only apply the localization $S^0 \to \tilde{E}\sF$ rather than the localization $S^0 \to \tilde{E}\sP$ for the full family $\sP$ of proper subgroups of $G$.
\end{example}

This localization is monoidal with respect to the $\sF$-model structure, so $E$ is still an $E_\infty^\sF$-algebra by Theorem \ref{thm:preservation-families}. When $G$ is finite, Example \ref{example:hill-ober-version} makes it clear that the localization is simply killing a homotopy element (namely: the Euler class $a_{\overline{\rho}}$ discussed in Section 2.6.3 of \cite{kervaire}). The presentation in Example \ref{example:hill-ober-version} has several benefits of its own: it generalizes to compact Lie groups $G$, it demonstrates that a smaller localization than Example \ref{example:hill} is needed to destroy $\sF-N_\infty$-algebra structure rather than $N_\infty$-algebra structure, and it shows how localization can reduce one's place in the lattice of $\sF-N_\infty$-algebras without reducing it all the way down to $E_\infty^{\sF}$. To see this, observe that the localization $E$ can still admit some multiplicative norms, for subgroups $H \in \sF \setminus \sP$. Hence, $E$ can still be a $\sK-N_\infty$-algebra if the family $\sK$ only intersects $\sP$ in the trivial subgroup (using the fact that $\mathbb{S}$ is an $N_\infty$-algebra for all choices of families of subgroups). Examples such as that of II.2.3 \cite{lewis-may-steinberger} can be used for this purpose.

Motivated by Example \ref{example:hill-ober-version}, we devote the next two sections to determining when a left Bousfield localization must preserve commutative structure. We will see that the key compatibility condition required is that the maps in $\C$ respect the free commutative monoid functor. In the case of equivariant spectra, this will imply that $\C$ respects the multiplicative norm functors.

\section{Bousfield Localization and Commutative Monoids} 
\label{sec:preservation-of-commutative}

In this section we turn to the interplay between monoidal Bousfield localizations and commutative monoids, i.e. algebras over the (non-cofibrant) operad $Com$. In \cite{white-commutative-monoids}, the following theory is developed as Definition 3.1, Theorem 3.2, and Corollary 3.8.

\begin{defn} \label{defn:comm-monoid-ax}
A monoidal model category $\M$ is said to satisfy the \textit{commutative monoid axiom} if whenever $h$ is a trivial cofibration in $\M$ then $h^{\boxprod n}/\Sigma_n$ is a trivial cofibration in $\M$ for all $n>0$.

If, in addition, the class of cofibrations is closed under the operation $(-)^{\boxprod n}/\Sigma_n$ then $\M$ is said to satisfy the \textit{strong commutative monoid axiom}.
\end{defn}

\begin{theorem} \label{thm:commMonModel}
Let $\M$ be a cofibrantly generated monoidal model category satisfying the commutative monoid axiom and the monoid axiom, and assume that the domains of the generating maps $I$ (resp. $J$) are small relative to $(I\otimes \M)$-cell (resp. $(J\otimes \M$)-cell). Let $R$ be a commutative monoid in $\M$. Then the category $CAlg(R)$ of commutative $R$-algebras is a cofibrantly generated model category in which a map is a weak equivalence or fibration if and only if it is so in $\M$. In particular, when $R=S$ this gives a model structure on commutative monoids in $\M$.
\end{theorem}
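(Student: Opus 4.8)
The plan is to obtain the model structure on $CAlg(R)$ by transfer along the free--forgetful adjunction $\Sym_R : \M \leftrightarrows CAlg(R) : U$, where $\Sym_R(X) = R \otimes \big(\coprod_{n\ge 0} X^{\otimes n}/\Sigma_n\big)$ is the free commutative $R$-algebra on $X$. The category $CAlg(R)$ is bicomplete: limits and filtered colimits are created by $U$, and the remaining colimits are assembled from tensor products and coequalizers. Declaring $\Sym_R(I)$ and $\Sym_R(J)$ to be the generating cofibrations and generating trivial cofibrations, and a map of commutative $R$-algebras to be a weak equivalence or fibration iff it is one in $\M$, it suffices by the usual transfer principle (Lemma 2.3 of \cite{SS00}, or Theorem 11.3.2 of \cite{hirschhorn}) to check two things: that $\Sym_R(I)$ and $\Sym_R(J)$ permit the small object argument, and that $U$ carries every relative $\Sym_R(J)$-cell complex to a weak equivalence in $\M$. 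The second of these is the heart of the matter and is where both axioms enter.

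The engine for this is the standard filtration of a pushout in $CAlg(R)$ along a free map. Given $h : K \to L$ in $\M$, a pushout square
\begin{align*}
\xymatrix{\Sym_R(K) \ar[r]^{\Sym_R(h)} \ar[d] & \Sym_R(L) \ar[d] \\ A \ar[r] & B}
\end{align*}
in $CAlg(R)$ admits a factorization of $A \to B$ as a transfinite composite $A = B_0 \to B_1 \to \cdots$ with $B = \colim_n B_n$, in which $B_{n-1} \to B_n$ is obtained by a pushout in $\M$ of $\mathrm{id}_A \otimes (h^{\boxprod n}/\Sigma_n)$, where $h^{\boxprod n}$ is the $n$-fold iterated pushout-product of $h$ with itself (its domain carrying the evident $\Sigma_n$-action) and $(-)/\Sigma_n$ denotes $\Sigma_n$-coinvariants. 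This is the classical filtration from the structured-ring-spectra literature (compare \cite{EKMM} and \cite{white-commutative-monoids}); the commutativity of $A$ is precisely what lets the $B_n$ be commutative $R$-algebras. Granting the filtration, the small object argument for $\Sym_R(I)$ and $\Sym_R(J)$ follows from the smallness hypotheses in the statement: applying $U$ and inserting the filtration exhibits a relative $\Sym_R(I)$-cell (resp. $\Sym_R(J)$-cell) complex as built from pushouts of maps $\mathrm{id}_A\otimes(h^{\boxprod n}/\Sigma_n)$ with $h\in I$ (resp. $h\in J$), so that $A$ being small relative to $(I\otimes\M)$-cell (resp. $(J\otimes\M)$-cell), together with the fact that $U$ preserves filtered colimits and the identity $CAlg(R)(\Sym_R(A),-) = \M(A,U(-))$, gives the needed smallness of $\Sym_R(A) = R\otimes\Sym(A)$.

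For the decisive condition, take $h$ a trivial cofibration in $\M$. The commutative monoid axiom says $h^{\boxprod n}/\Sigma_n$ is a trivial cofibration in $\M$ for every $n>0$, so each attaching map $\mathrm{id}_A\otimes(h^{\boxprod n}/\Sigma_n)$ lies in $(\text{Trivial-Cofibrations})\otimes\M$; hence each $B_{n-1}\to B_n$ (a pushout of such a map) and then the transfinite composite $A\to B$ lie in $(\text{Trivial-Cofibrations}\otimes\M)$-cell, and so are weak equivalences in $\M$ by the monoid axiom. A general relative $\Sym_R(J)$-cell complex is a transfinite composite of pushouts of this form, so it too is sent by $U$ into $(\text{Trivial-Cofibrations}\otimes\M)$-cell, hence to a weak equivalence. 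This verifies the transfer hypotheses, producing the cofibrantly generated model structure on $CAlg(R)$; the case $R=S$ is exactly the recorded model structure on commutative monoids in $\M$, and the description of weak equivalences and fibrations is built into the transfer.

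The main obstacle is the pushout analysis itself: constructing the filtration of $A\to B$ in $CAlg(R)$ correctly, keeping careful track of the $A$-module structure and the $\Sigma_n$-equivariance so that the $n$-th layer is genuinely governed by $\mathrm{id}_A\otimes(h^{\boxprod n}/\Sigma_n)$. Once this is in hand, the commutative monoid axiom and the monoid axiom do precisely what is needed, and bicompleteness of $CAlg(R)$, the transfer formalism, and the smallness bookkeeping are routine. (One could alternatively reduce the general $R$ to $R=S$ via $CAlg(R) \cong \CMon(R\text{-mod})$, noting that $R\text{-mod}$ with generating cofibrations $R\otimes I$, $R\otimes J$ again satisfies the monoid and commutative monoid axioms; but the argument above treats all $R$ uniformly.)
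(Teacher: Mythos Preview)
The paper does not actually prove this theorem here; it is quoted from the companion paper \cite{white-commutative-monoids} (as Theorem 3.2 there), and no proof appears in the present paper. That said, your sketch is correct and matches the approach of \cite{white-commutative-monoids}: transfer along $\Sym_R \dashv U$, the filtration of a free pushout in $CAlg(R)$ whose layers are pushouts of $\mathrm{id}_A \otimes (h^{\boxprod n}/\Sigma_n)$, the commutative monoid axiom to make these trivial cofibrations when $h\in J$, and the monoid axiom to conclude. One can see this is the intended argument from the way the same filtration (and the reduction ``it suffices to check on generators'') is invoked in this paper's proof of Lemma~\ref{lemma:boxprod-equiv-if-sym}, with explicit reference to Appendix~A of \cite{white-commutative-monoids}.

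One small point worth tightening in your smallness paragraph: to place $U$ applied to a relative $\Sym_R(I)$-cell complex inside $(I\otimes\M)$-cell, you implicitly need each $h^{\boxprod n}/\Sigma_n$ (for $h\in I$) to lie in $I$-cof, which is the content of the \emph{strong} commutative monoid axiom rather than the plain one. In \cite{white-commutative-monoids} the smallness verification is handled with a bit more care (and Appendix~A there does the closure-under-$(-)^{\boxprod n}/\Sigma_n$ argument on generators that you would want). This is a bookkeeping issue rather than a structural gap in your outline.
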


\begin{corollary} \label{cor:semi-on-comm-alg}
Let $\M$ be a cofibrantly generated monoidal model category satisfying the commutative monoid axiom, and assume that the domains of the generating maps $I$ (resp. $J$) are small relative to $(I\otimes \M)$-cell (resp. $(J\otimes \M$)-cell). Then for any commutative monoid $R$, the category of commutative $R$-algebras is a cofibrantly generated semi-model category in which a map is a weak equivalence or fibration if and only if it is so in $\M$.
\end{corollary}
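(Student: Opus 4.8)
The plan is to obtain the semi-model structure by transferring the model structure on $\M$ across the free--forgetful adjunction $\Sym_R:\M\leftrightarrows CAlg(R):U$, where $\Sym_R$ is the free commutative $R$-algebra functor, so that the underlying object of $\Sym_R(X)$ is $R\otimes\coprod_{n\ge 0}X^{\otimes n}/\Sigma_n$. I would invoke the transfer principle in its semi-model form (the semi-model analogue of Lemma 2.3 in \cite{SS00}, as recorded in \cite{spitzweck-thesis} and as Theorem 12.1.4 of \cite{fresse-book}): declare a morphism of commutative $R$-algebras to be a weak equivalence or fibration precisely when $U$ sends it to one, and take $\Sym_R(I)$ and $\Sym_R(J)$ as candidate generating (trivial) cofibrations. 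The routine inputs are quick: $CAlg(R)$ is bicomplete (limits are created by $U$; colimits exist by standard monadicity arguments since $\M$ is cocomplete), and the smallness hypothesis on the domains of $I$ and $J$ relative to $(I\otimes\M)$-cell and $(J\otimes\M)$-cell transfers to smallness of the domains of $\Sym_R(I)$ and $\Sym_R(J)$ relative to the corresponding cell complexes, exactly as in the proof of Theorem \ref{thm:commMonModel} in \cite{white-commutative-monoids}, because the underlying map of any $\Sym_R$-cell attachment is assembled by pushout and transfinite composition from maps of the form $(-)\otimes(-)$ with one factor a (trivial) cofibration.

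The content lies in the remaining hypothesis of the transfer principle, in the form needed for a semi-model structure: every relative $\Sym_R(J)$-cell complex $A\to P$ \emph{whose domain is cofibrant} must be sent by $U$ to a trivial cofibration of $\M$. In the definition of semi-model category recalled in Section \ref{sec:background} this is all that is required, since the lifting and factorization axioms constrain only maps out of cofibrant objects. I would reduce, by the usual transfinite-composition and retract formalism, to a single pushout of $\Sym_R(h)$ along a map $\Sym_R(K)\to A$, with $h:K\to L$ a generating trivial cofibration of $\M$ and $A$ a cofibrant commutative $R$-algebra, and then run the standard filtration of a free extension: $A=P_0\to P_1\to\cdots$ with $P=\colim_n P_n$, where $P_{n-1}\to P_n$ is a pushout in $\M$ of a map built from $A$ and the $\Sigma_n$-coinvariants $h^{\boxprod n}/\Sigma_n$ of the $n$-fold pushout-product power of $h$. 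The commutative monoid axiom identifies $h^{\boxprod n}/\Sigma_n$ as a trivial cofibration of $\M$; tensoring it against the relevant piece of $A$ and applying the pushout product axiom exhibits each $P_{n-1}\to P_n$ as a trivial cofibration, whence $A\to P$ --- and any transfinite composite or retract of such --- is a trivial cofibration as well.

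The main obstacle is precisely the step at which Theorem \ref{thm:commMonModel} used the monoid axiom, and this is the one place where the hypotheses are genuinely weakened: there $A$ was an arbitrary commutative $R$-algebra, so the filtration layers were pushouts of maps of the form $(\text{trivial cofibration})\otimes(\text{arbitrary object})$, and their transfinite composites lie in $(\text{Trivial-Cofibrations}\otimes\M)$-cell, which the monoid axiom declares to consist of weak equivalences. Restricting to cofibrant $A$ is what removes this need: the objects tensored against $h^{\boxprod n}/\Sigma_n$ are now built from a cofibrant commutative $R$-algebra, and the delicate point is to show such objects are flat enough that tensoring with them preserves the trivial cofibrations furnished by the commutative monoid axiom. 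I would establish this by running the same free-extension filtration one level down, over the cell presentation of $A$, again powered by the pushout product axiom and the commutative monoid axiom; keeping track of cofibrancy through this nested induction, and checking that the plain (rather than strong) commutative monoid axiom suffices, is where most of the care is required. Once this is in place, the transfer principle yields the asserted cofibrantly generated semi-model structure on $CAlg(R)$, with weak equivalences and fibrations reflected and preserved by $U$.
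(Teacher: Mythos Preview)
The paper does not prove this corollary here; it is quoted from the companion paper \cite{white-commutative-monoids} (as Corollary~3.8 there), so there is no in-paper argument to compare against. Your overall architecture---transfer along $\Sym_R\dashv U$, the free-extension filtration exhibiting $U(A\to P)$ as a transfinite composite of pushouts of maps of the form $A\otimes(h^{\boxprod n}/\Sigma_n)$, and the commutative monoid axiom to make each $h^{\boxprod n}/\Sigma_n$ a trivial cofibration---is the standard route and is certainly the skeleton of the argument in that companion paper.

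Your proposed resolution of the one substantive step, however, does not close. To run the nested induction over the $\Sym_R(I)$-cell presentation of a cofibrant $A$, each cell attachment along $\Sym_R(g)$ with $g\in I$ yields filtration layers governed by $g^{\boxprod m}/\Sigma_m$, and propagating flatness through the induction requires these to be cofibrations (so that, for instance, $\Sym^m(X)$ is cofibrant for cofibrant $X$ and the pushout product axiom applies at each stage). That is precisely the content of the \emph{strong} commutative monoid axiom, which the corollary does not assume. You correctly flag this as ``where most of the care is required,'' but the sketch offers no mechanism by which the plain axiom alone controls $g^{\boxprod m}/\Sigma_m$ for non-acyclic $g$. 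Either a different device is needed---for example, a version of the semi-model transfer in which the restricted lifting and factorization axioms are imposed only for objects whose \emph{underlying} $\M$-object is cofibrant, which the pushout product axiom handles directly---or one should simply work under the strong form of the axiom. The paper itself hints at this immediately after stating the corollary, remarking that ``in practice we usually desire the strong commutative monoid axiom'' precisely so that cofibrations with cofibrant domain in $CAlg(R)$ forget to cofibrations in $\M$.
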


While these results only make use of the commutative monoid axiom, in practice we usually desire the strong commutative monoid axiom so that in the category of commutative $R$-algebras cofibrations with cofibrant domains forget to cofibrations in $\M$. This is discussed further in \cite{white-commutative-monoids} and numerous examples of model categories satisfying these axioms are given.

In order to apply the corollary above to verify the hypotheses of Corollary \ref{cor:loc-pres-alg-for-semi} we must give conditions on the maps $\C$ so that if $\M$ satisfies the commutative monoid axiom then so does $L_\C(\M)$. As for the pushout product axiom, our method will be to apply Lemma \ref{lemma:left-Quillen-and-loc}, which is just the universal property of Bousfield localization. However, $(-)^{\boxprod n}/\Sigma_n$ is not a functor on $\M$, but rather on $\Arr(\M)$. The following lemma lets us instead work with the functor $\Sym^n:\M \to \M$ defined by $\Sym^n(X)=X^{\otimes n}/\Sigma_n$. This lemma is proved in Appendix A of \cite{white-commutative-monoids}.

\begin{lemma} \label{lemma:boxprod-equiv-if-sym}
Assume that for every $g\in I$, $g^{\boxprod n}/\Sigma_n$ is a cofibration. Suppose $f$ is a trivial cofibration between cofibrant objects and $f^{\boxprod n}/\Sigma_n$ is a cofibration for all $n$. Then $f^{\boxprod n}/\Sigma_n$ is a trivial cofibration for all $n$ if and only if $\Sym^n(f)$ is a trivial cofibration for all $n$.
\end{lemma}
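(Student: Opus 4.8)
The plan is to reduce both implications to the standard symmetric-power filtration comparing $\Sym^n$ of a cofibration with its iterated pushout-product powers. Recall (see the symmetric-power filtration in \cite{EKMM} and \cite{white-commutative-monoids}) that for a cofibration $f\colon X\to Y$ there is a finite composite
\[ \Sym^n(X) = Q_0 \to Q_1 \to \cdots \to Q_{n-1} \to Q_n = \Sym^n(Y) \]
in which $Q_{n-1}$ is, by construction, the domain of $f^{\boxprod n}/\Sigma_n$ and the last stage $Q_{n-1}\to Q_n$ is exactly $f^{\boxprod n}/\Sigma_n$, while for $1\le i\le n-1$ the stage $Q_{i-1}\to Q_i$ is a pushout of $(\emptyset\to \Sym^{n-i}(X))\boxprod(f^{\boxprod i}/\Sigma_i)$. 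Before running the argument I would record that the hypothesis that $g^{\boxprod m}/\Sigma_m$ is a cofibration for every $g\in I$ and every $m$ forces $\Sym^m$ to carry cofibrant objects to cofibrant objects — this follows by applying the same filtration transfinitely to a cellular presentation of a cofibrant object, cf. \cite{white-commutative-monoids} — so that each $\Sym^{n-i}(X)$ above is cofibrant and $\emptyset\to\Sym^{n-i}(X)$ is a cofibration. This is the only role played by the hypothesis on $I$, which is why it appears in the statement.

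For the implication ($\Rightarrow$): suppose $f^{\boxprod m}/\Sigma_m$ is a trivial cofibration for every $m$. In the filtration of $\Sym^n(f)$ the top stage is $f^{\boxprod n}/\Sigma_n$, a trivial cofibration by hypothesis, and each lower stage $Q_{i-1}\to Q_i$ is a pushout of $(\emptyset\to\Sym^{n-i}(X))\boxprod(f^{\boxprod i}/\Sigma_i)$, which is the pushout product of a cofibration with a trivial cofibration, hence a trivial cofibration by the pushout product axiom on $\M$. Since trivial cofibrations are closed under pushout and composition, the composite $\Sym^n(f)$ is a trivial cofibration.

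For the implication ($\Leftarrow$): suppose $\Sym^m(f)$ is a trivial cofibration for every $m$, and induct on $n$. The case $n=1$ is immediate since $\Sym^1(f)=f=f^{\boxprod 1}/\Sigma_1$. For the inductive step, the inductive hypothesis gives that $f^{\boxprod i}/\Sigma_i$ is a trivial cofibration for $i<n$, so, exactly as in the previous paragraph, the composite $Q_0\to Q_{n-1}$ of the first $n-1$ stages of the filtration of $\Sym^n(f)$ is a trivial cofibration. Since $\Sym^n(f)=Q_0\to Q_n$ is a trivial cofibration by hypothesis, 2-out-of-3 shows that $Q_{n-1}\to Q_n$, which is $f^{\boxprod n}/\Sigma_n$, is a weak equivalence; it is a cofibration by the standing assumption on $f$, hence a trivial cofibration. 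This closes the induction.

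The hard part here is not the homotopical reasoning — which is just the pushout product axiom together with 2-out-of-3 — but getting the symmetric-power filtration exactly right: one must know that its penultimate term is the domain of $f^{\boxprod n}/\Sigma_n$, so that the top stage is literally $f^{\boxprod n}/\Sigma_n$ (this is precisely what makes the $(\Leftarrow)$ direction work, since a mere pushout of $f^{\boxprod n}/\Sigma_n$ would not suffice), and that the lower stages are pushout products against $\Sym^{n-i}(X)$. The only other nontrivial input is the observation that the hypothesis on $I$ guarantees $\Sym^m$ preserves cofibrancy of objects.
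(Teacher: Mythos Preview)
Your proof is correct and follows essentially the same route as the paper's: both use the symmetric-power filtration of $\Sym^n(f)$, identify the top stage as $f^{\boxprod n}/\Sigma_n$ and the lower stages as pushouts of $\Sym^{n-i}(X)\otimes(f^{\boxprod i}/\Sigma_i)$, then argue the forward direction by closure of trivial cofibrations under pushout and composition, and the converse by induction and 2-out-of-3. The only minor difference is in justifying the cofibrancy of $\Sym^{n-i}(X)$: the paper observes directly that $\emptyset\to\Sym^k(X)$ is $(\emptyset\to X)^{\boxprod k}/\Sigma_k$, which is a cofibration once the hypothesis on $I$ is propagated to all cofibrations (via Appendix~A of \cite{white-commutative-monoids}), whereas you invoke a transfinite cellular argument; the paper's route is slightly more direct here, but the content is the same.
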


With this lemma in hand, we are ready to prove the main result of this section, regarding preservation of the commutative monoid axiom by Bousfield localization.

\begin{theorem}\label{thm:loc-preserves-cmon-axiom}
Assume $\M$ is a cofibrantly generated monoidal model category satisfying the strong commutative monoid axiom and with domains of the generating cofibrations cofibrant. Suppose that $L_\C(\M)$ is a monoidal Bousfield localization with generating trivial cofibrations $J_\C$. If $\Sym^n(f)$ is a $\C$-local equivalence for all $n \in \N$ and for all $f \in J_\C$, then $L_\C(\M)$ satisfies the strong commutative monoid axiom. 
\end{theorem}

\begin{remark}
The condition of Theorem \ref{thm:loc-preserves-cmon-axiom},that $\Sym^n(f)$ is a $\C$-local equivalence for all $n \in \N$ and for all $f \in J_\C$, is equivalent to the condition that $\Sym(-)$ preserves $\C$-local equivalences between cofibrant objects. The latter condition implies the former because $\Sym^n(f)$ is a retract of $\Sym(f)$, and the maps in $J_\C$ may be assumed to have cofibrant domains, as shown in Theorem \ref{thm:loc-preserves-cmon-axiom}. That the former implies the latter follows from Theorem 5.6 of \cite{batanin-white-eilenberg}, which shows that the existence of a transferred semi-model structure on commutative monoids in $L_\C(\M)$ implies $\Sym(-)$ preserves $\C$-local equivalences between cofibrant objects. This result, together with Theorem \ref{thm:loc-preserves-cmon-axiom}, implies that both conditions are equivalent to existence of a transferred semi-model structure on commutative monoids in $L_\C(\M)$. Furthermore, the existence of this semi-model structure is equivalent to $L_\C(\M)$ satisfying the strong commutative monoid axiom, since the existence of the semi-model structure implies $\Sym(-)$ preserves $\C$-local equivalences, which implies the strong commutative monoid axiom by Theorem \ref{thm:loc-preserves-cmon-axiom}. We have refrained from stating Theorem \ref{thm:loc-preserves-cmon-axiom} as an `if and only if' to match the discussion in \cite{batanin-white-eilenberg} where the converse was first noticed.
\end{remark}

We turn now to the proof of Theorem \ref{thm:loc-preserves-cmon-axiom}, and to several related results. All of these results are meant to find the easiest possible condition to check on $\C$ so that $L_\C(\M)$ satisfies the commutative monoid axiom. Theorem \ref{thm:loc-preserves-cmon-axiom} reduces the problem from having to check the class of all $\C$-local equivalences to only having to check the set $J_\C$ (which, unfortunately, is often mysterious in practice). It is tempting to try to prove Theorem \ref{thm:loc-preserves-cmon-axiom} using Lemma \ref{lemma:left-Quillen-and-loc}, as we did in Theorem \ref{thm:PPAxiom}, since this would reduce the problem to checking the set $J\cup \C$ (which is much less mysterious than $J_\C$). However, $\Sym^n$ is not a left adjoint. One could attempt to get around this by applying Lemma \ref{lemma:left-Quillen-and-loc} with the functor $\Sym:\M \to \CMon(\M)$, but this would require the existence of a model structure on $\CMon(\M)$ in which the weak equivalences are $\C$-local equivalences. As this is what we're trying to prove by obtaining the commutative monoid axiom on $L_\C(\M)$, this approach is doomed to fail. Instead, we opt for a more technical argument, following the techniques of \cite{white-commutative-monoids}.

\begin{proof}[Proof of Theorem \ref{thm:loc-preserves-cmon-axiom}]
It is proven in Appendix A of \cite{white-commutative-monoids} that if $(-)^{\boxprod n}/\Sigma_n$ takes generating (trivial) cofibrations to (trivial) cofibrations, then it takes all (trivial) cofibrations to (trivial) cofibrations. The generating cofibrations of $L_\C(\M)$ are the same as those in $\M$ and $\M$ satisfies the strong commutative monoid axiom, so the class of cofibrations of $L_\C(\M)$ is closed under the operation $(-)^{\boxprod n}/\Sigma_n$.

Suppose, for every generating trivial cofibration $f:X\to Y$ of $L_\C(\M)$, that $\Sym^n(f)$ is a $\C$-local equivalence. Because the domains of the generating cofibrations in $\M$ are cofibrant, the same is true in $L_\C(\M)$ (see Proposition 4.3 in \cite{hovey-comodules}), so we may assume $f$ has cofibrant domain and codomain. In particular, the proof of Lemma \ref{lemma:boxprod-equiv-if-sym} implies $\Sym^n(f)$ is a cofibration, because $f^{\boxprod k}/\Sigma_k$ is a cofibration for all $k$ and the domain $X$ of $f$ is cofibrant.

By hypothesis, $\Sym^n(f)$ is a trivial cofibration of $L_\C(\M)$ for all $n$. We are therefore in the situation of Lemma \ref{lemma:boxprod-equiv-if-sym} and may conclude that $f^{\boxprod n}/\Sigma_n$ is a trivial cofibration for all $n$. We now apply the result from Appendix A of \cite{white-commutative-monoids} to conclude that all trivial cofibrations of $L_\C(\M)$ are closed under the operation $(-)^{\boxprod n}/\Sigma_n$.
\end{proof}

If we know more about $\M$ in the statement of Theorem \ref{thm:loc-preserves-cmon-axiom}, then we can in fact get a sharper condition regarding the generating trivial cofibrations $J_\C$. One way to better understand the trivial cofibrations in $L_\C(\M)$ is via the theory of framings. Definition 4.2.1 of \cite{hirschhorn} defines the \textit{full class of horns on $\C$} to be the class 
\begin{align*} \Lambda(\C) = \{\tilde{f} \boxprod i_n \;|\; f\in \C, n\geq 0\}
\end{align*} 

where $i_n:\partial \Delta[n] \to \Delta[n]$ and $\tilde{f}:\tilde{A} \to \tilde{B}$ is a Reedy cofibration between cosimplicial resolutions. In the case where $\C$ is a set and $\M$ is cofibrantly generated, Definition 4.2.2 of \cite{hirschhorn} defines an \textit{augmented set of $\C$-horns} to be $\overline{\Lambda(\C)} = \Lambda(\C) \cup J$. Finally, 4.2.5 of \cite{hirschhorn} defines a set $\tilde{\Lambda(\C)}$ to be a set of relative $I$-cell complexes with cofibrant domains obtained from $\overline{\Lambda(\C)}$ via cofibrant replacement. Note that, according to the erratum to \cite{hirschhorn}, we do not know that the domains of maps in $\tilde{\Lambda(\C)}$ are cofibrant, but we do know that they are small relative to $I$.
%% need to cite the erratum??

We now advertise the surprising and powerful Theorem 3.11 in \cite{Barnes12LeftAndRight}. This result states that if $\M$ is proper and stable, if the $\C$-local objects are closed under $\Sigma$ (such $L_\C$ are called \textit{stable}), and if $\C$ consists of cofibrations between cofibrant objects then $J_\C$ is $J \cup \Lambda(\C)$. The last hypothesis is a standing assumption for this paper. The key input to Theorem 3.11 is the observation that for such $\M$, a map is a $\C$-fibration if and only if its fiber is $\C$-fibrant.

\begin{corollary} \label{cor:loc-pres-for-stable-proper}
Suppose $\M$ is a stable, proper, simplicial model category satisfying the strong commutative monoid axiom. Suppose that $L_\C$ is a stable and monoidal Bousfield localization such that for all $n \in \N$ and $f\in \C$, $\Sym^n(f)$ is a $\C$-local equivalence. Then $L_\C(\M)$ satisfies the strong commutative monoid axiom.
\end{corollary}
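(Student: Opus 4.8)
The plan is to invoke Theorem \ref{thm:loc-preserves-cmon-axiom}, whose only non-formal input is the requirement that $\Sym^n(f)$ be a $\C$-local equivalence for every $n$ and every $f$ in some set $J_\C$ of generating trivial cofibrations of $L_\C(\M)$. So the work reduces to (i) identifying a usable $J_\C$, and (ii) checking the $\Sym^n$ condition on it. For (i), since $\M$ is proper and stable, $L_\C$ is stable, and $\C$ consists (by our standing hypothesis) of cofibrations between cofibrant objects, Theorem 3.11 of \cite{Barnes12LeftAndRight} gives $J_\C = J \cup \Lambda(\C)$; and because $\M$ is simplicial, the cosimplicial resolutions defining $\Lambda(\C)$ can be taken to be the standard ones built from the simplicial tensoring, so that $\Lambda(\C) = \{\, f_0 \boxprod i_m \mid f_0 \in \C,\ m \geq 0 \,\}$ with $i_m\colon \partial\Delta[m] \hookrightarrow \Delta[m]$ in $sSet$. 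The tractability hypothesis needed to apply Theorem \ref{thm:loc-preserves-cmon-axiom} is arranged exactly as in the proof of Theorem \ref{thm:PPAxiom-nontractable}, by replacing $I$ and $J$ with cofibrant-domain versions; this is automatic in the examples of interest.

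For (ii) there are two cases. If $f \in J$, this is purely a statement in $\M$: $f$ is a trivial cofibration with cofibrant domain $X$, and the filtration $\Sym^n(X) = Q^n_0/\Sigma_n \to \cdots \to Q^n_n/\Sigma_n = \Sym^n(Y)$ of diagram (\ref{eq:pushout_defining_equivariant_after_coinvariants}) exhibits each stage as a pushout of the map $\Sym^{n-q}(X) \otimes (f^{\boxprod q}/\Sigma_q)$; the strong commutative monoid axiom on $\M$ makes $f^{\boxprod q}/\Sigma_q$ a trivial cofibration, the pushout product axiom together with flatness of the cofibrant object $\Sym^{n-q}(X)$ make the tensored map a trivial cofibration, and hence $\Sym^n(f)$ is a trivial cofibration of $\M$, in particular a $\C$-local equivalence. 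If $f = f_0 \boxprod i_m \in \Lambda(\C)$ with $f_0 \in \C$, then by Lemma \ref{lemma:boxprod-equiv-if-sym} applied inside $L_\C(\M)$ (whose cofibrations are closed under $(-)^{\boxprod k}/\Sigma_k$ since $\M$ satisfies the strong commutative monoid axiom), it suffices to show that the symmetric pushout power $f^{\boxprod n}/\Sigma_n \cong (f_0^{\boxprod n} \boxprod i_m^{\boxprod n})/\Sigma_n$ — using associativity and symmetry of $\boxprod$, with $\Sigma_n$ acting diagonally — is a trivial cofibration in $L_\C(\M)$ for all $n$. One filters $f_0^{\boxprod n} \boxprod i_m^{\boxprod n}$ by $\Sigma_n$-isotropy (using a $\Sigma_n$-CW structure on the finite $sSet$-object $i_m^{\boxprod n}$) and passes to orbits; the subquotients are then built by $\otimes$ and $\boxprod$ from the maps $\Sym^a(f_0)$, which are $\C$-local equivalences by the hypothesis of the corollary, and from symmetric powers of $i_m$, which are cofibrations of simplicial sets since $sSet$ satisfies the strong commutative monoid axiom. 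Because $L_\C$ is a monoidal Bousfield localization, cofibrant objects are flat in $L_\C(\M)$, so the cofibrant simplicial pieces can be tensored against the $\M$-pieces without destroying $\C$-local equivalences; the cube lemma together with closure of trivial cofibrations under pushout and transfinite composition then carries the $\C$-local equivalences up the filtration. Feeding both cases into Theorem \ref{thm:loc-preserves-cmon-axiom} yields the strong commutative monoid axiom for $L_\C(\M)$.

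I expect the main obstacle to be the $\Lambda(\C)$ case, and specifically the passage from the diagonal quotient $(f_0^{\boxprod n} \boxprod i_m^{\boxprod n})/\Sigma_n$ to a gluing of the separate symmetric powers of $f_0$ and $i_m$: there is no formula $\Sym^n(A \otimes B) \cong \Sym^n(A) \otimes \Sym^n(B)$, so one must run the isotropy filtration carefully and verify that the non-diagonal "mixed" strata contribute only \emph{induced} $\Sigma$-pieces, so that after taking orbits they reassemble into honest pushout-products of symmetric powers with no residual free terms obstructing the weak-equivalence count. This is the same style of bookkeeping carried out in Appendix A of \cite{white-commutative-monoids}, now relativized over $sSet$; everything else — the $f \in J$ case, the reduction to cofibrant-domain generators, and the assembly via Theorem \ref{thm:loc-preserves-cmon-axiom} — is routine given the machinery already in place.
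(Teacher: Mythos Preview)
Your overall strategy matches the paper's exactly: invoke Theorem~\ref{thm:loc-preserves-cmon-axiom}, use Barnes--Roitzheim to identify $J_\C = J \cup \Lambda(\C)$, and dispatch the $J$ part via the strong commutative monoid axiom on $\M$. Your identification of $\Lambda(\C)$ in the simplicial case as $\{f_0 \boxprod i_m : f_0 \in \C\}$ is correct and in fact slightly cleaner than the paper's formulation.

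Where you diverge is in the treatment of $\Lambda(\C)$. The paper does not attempt your isotropy filtration of $(f_0^{\boxprod n} \boxprod i_m^{\boxprod n})/\Sigma_n$. Instead it exploits the pushout-square presentation of the horn map: the horizontal maps in that square are of the form $f \otimes K$ for cofibrant simplicial sets $K$, and since $(-)^{\boxprod n}/\Sigma_n$ sends pushout squares to pushout squares (Appendix~A of \cite{white-commutative-monoids}), one reduces to showing $(f \otimes K)^{\boxprod n}/\Sigma_n$ is a $\C$-local trivial cofibration. Via Lemma~\ref{lemma:boxprod-equiv-if-sym} this becomes the statement that $\Sym^n(f \otimes K)$ is a $\C$-local trivial cofibration whenever $\Sym^n(f)$ is, and the paper simply cites Lemma~27 of \cite{gorchinskiy-symmetrizable} for this.

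Your route is viable but harder. The point you would need to make explicit---and which you only gesture at with ``no residual free terms''---is that the $\Sigma_n$-isotropy groups appearing in the equivariant cell structure of $i_m^{\boxprod n}$ are all Young subgroups $\Sigma_{n_1} \times \cdots \times \Sigma_{n_k}$, so that after taking orbits the strata really do decompose as $\boxprod$-products of the $f_0^{\boxprod n_i}/\Sigma_{n_i}$ tensored with cofibrant simplicial sets. This is true (the stabilizer of a simplex in $(\Delta[m])^n$ under coordinate permutation is Young), and once stated the rest of your argument goes through using monoidality of $L_\C$ and the pushout product axiom. The paper's approach buys you a cleaner black-box reduction; yours buys independence from \cite{gorchinskiy-symmetrizable}, at the cost of reproving what is essentially the content of that lemma.
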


\begin{proof}
By Theorem \ref{thm:loc-preserves-cmon-axiom} we must only check that $\Sym^n$ takes maps in $J_\C = J \cup \Lambda(\C)$ to $\C$-local equivalences. By the commutative monoid axiom on $\M$, maps in $J$ are taken to weak equivalences, so we must only consider maps in $\Lambda(\C)$.

The reason for the hypothesis that $\M$ is simplicial is Remark 5.2.10 in \cite{hovey-book}, which states that the functor $\tilde{A}^m = A\otimes \Delta[m]$ is a cosimplicial resolution of $A$ (at least, when $A$ is cofibrant). We further observe that the model structure on $L_\C(\M)$ is independent of the choice of cosimplicial resolution. Thus, we may take our map in $\Lambda(\C)$ to be of the form $(f \otimes \Delta[m]) \boxprod i_n$ where $f:A\to B$ is in $\C$.

The map $(f \otimes \Delta[m]) \boxprod i_n$ can be realized as the corner map in the diagram
\begin{align*}
\xymatrix{A \otimes \Delta[m] \otimes \partial \Delta[n]_+ \ar[r] \ar[d] \po & B\otimes \Delta[m] \otimes \partial \Delta[n]_+  \ar[d] \ar@/^1pc/[ddr] & \\
A \otimes \Delta[m] \otimes \Delta[n]_+ \ar[r] \ar@/_1pc/[drr] & \dom((f\otimes \Delta[m]) \boxprod i_n) \ar[dr]^{(f\otimes \Delta[m]) \boxprod i_n} & \\
& & B\otimes \Delta[m] \otimes \Delta[n]_+} 
\end{align*}

If we can prove that $(g\otimes K)^{\boxprod n}/\Sigma_n$ is a $\C$-local trivial cofibration for any $\C$-local trivial cofibration $g$ between cofibrant objects then we can apply the same reasoning from the proof of Proposition \ref{prop:helper-for-monoidal-loc} to deduce that the corner map becomes a $\C$-local trivial cofibration after applying $(-)^{\boxprod n}/\Sigma_n$. This reasoning goes by proving that after applying $(-)^{\boxprod n}/\Sigma_n$ the lower curved map and the top horizontal map are $\C$-local trivial cofibrations, so the bottom horizontal map is as well (because it is a pushout), and hence the corner map is a weak equivalence by the two out of three property. This reasoning works because whenever $f$ is a pushout of $g$ then $f^{\boxprod n}/\Sigma_n$ is a pushout of $g^{\boxprod n}/\Sigma_n$ as shown in Appendix A of \cite{white-commutative-monoids}.

Because $g\otimes K$ is a $\C$-local trivial cofibration between cofibrant objects, we may apply Lemma \ref{lemma:boxprod-equiv-if-sym} to reduce the final step to checking that if $\Sym^n(g)$ is a $\C$-local trivial cofibration for all $n$ then so is $\Sym^n(g\otimes K)$. This is proven as Lemma 27 in \cite{gorchinskiy-symmetrizable}.
\end{proof}

When the hypotheses of stability and properness are dropped one can no longer easily write down the set $J_\C$. However, Theorem 4.1.1 (and its proof, notably 4.3.1) in \cite{hirschhorn} demonstrate that the class of maps $X\to L_\C(X)$ are contained in $\tilde{\Lambda(\C)}$-cell. Given a $\C$-local trivial cofibration $g:X_1\to X_2$ between cofibrant objects, applying fibrant replacement $L_\C$ results in a map $L_\C(g)$, that is a weak equivalence between cofibrant objects. An appeal to Ken Brown's lemma on the functor $\Sym^n$ and to the two out of three property reduces the verification that $(-)^{\boxprod n}/\Sigma_n$ takes $g$ to a $\C$-local equivalence to verifying that $(-)^{\boxprod n}/\Sigma_n$ takes $X_i \to L_\C(X_i)$ to $\C$-local equivalences.

Since such maps are in $\tilde{\Lambda(\C)}$-cell, by Appendix A of \cite{white-commutative-monoids} one must only show that maps in $\tilde{\Lambda(\C)}$ are taken to $\C$-local equivalences by $(-)^{\boxprod n}/\Sigma_n$ (that they are taken to cofibrations is immediate by the strong commutative monoid axiom on $\M$). This observation leads to the following result, which we have recently learned was independently discovered as Theorem 28 in version 3 of the preprint \cite{gorchinskiy-symmetrizable}.

\begin{theorem}\label{thm:loc-preserves-cmon-axiom-simplicial}
Suppose $\M$ is a cofibrantly generated, simplicial model category satisfying the strong commutative monoid axiom and with domains of the generating cofibrations cofibrant. Suppose that for all $n \in \N$ and $f\in \C$, $\Sym^n(f)$ is a $\C$-local equivalence. Then $L_\C(\M)$ satisfies the strong commutative monoid axiom.
\end{theorem}

As the proof of this Theorem appears in \cite{gorchinskiy-symmetrizable}, we will content ourselves with the sketch of the proof given above and we refer the interested reader to \cite{gorchinskiy-symmetrizable} for details. With a careful analysis of $\tilde{\Lambda(\C)}$ the author believes one could remove the need for $\M$ to be simplicial. However, lacking equations of the sort found in Remark 5.2.10 of \cite{hovey-book}, he does not know how to proceed.

\begin{remark}
The commutative monoid axiom has a natural generalization to an arbitrary operad $P$. The proof of Proposition 7.6 in \cite{harper-operads} demonstrates a precise hypothesis on $\M$ so that $P$-algebras inherit a model structure, namely that for all $A\in P$-alg and for all $n$, $P_A[n] \otimes_{\Sigma_n} (-)^{\Box n}$ preserves trivial cofibrations (where $P_A$ is the enveloping operad). If these hypotheses are only satisfied for cofibrant $A$ then $P$-alg inherits a semi-model structure. We hope in the future to study the types of localizations that preserve these axioms, so that Corollary \ref{cor:loc-pres-alg-for-semi} may be applied to deduce preservation results for arbitrary operads $P$. We conjecture that the correct condition on a localization is that for all $f\in \C$, for all $A\in P$-alg, and for all $n$, then $P_A[n]\otimes_{\Sigma_n} f^{\boxprod n}$ is contained in the $\C$-local equivalences. Assuming a $P$-algebra analogue of Lemma \ref{lemma:boxprod-equiv-if-sym}, the proof of Corollary \ref{cor:loc-pres-for-stable-proper} will go through, if we assume $P_A[n]\otimes_{\Sigma_n} f^{\boxprod n}$ is contained in the $\C$-local equivalences, for all $f$ of the form $g\otimes K$ where $g\in \C$ and $K$ is a simplicial set.
\end{remark}

\begin{remark} \label{remark:batanin-white}
Theorem \ref{thm:loc-preserves-cmon-axiom} also has a converse, that the author discovered in joint work with Michael Batanin \cite{batanin-white-eilenberg} (Theorem 5.6 and Example 5.9). For nicely behaved model categories, including all examples considered in this paper, the following are equivalent:
\begin{enumerate}
\item $L_\C$ preserves $P$-algebras,
\item $P$-alg($L_\C(\M))$ admits a transferred semi-model structure from $L_\C(\M)$,
\item $L_\C$ lifts to a localization of $P$-algebras (inverting the maps $P(\C)$),
\item $U$ preserves local equivalences,
\end{enumerate}
and any of these statements implies $P(-)$ preserves $\C$-local equivalences between cofibrant objects. A dual result, for the situation of right Bousfield localization, appears in \cite{white-yau4}. It follows that, for any of the situations from Theorem \ref{thm:loc-preserves-cmon-axiom}, \ref{cor:loc-pres-for-stable-proper}, or \ref{thm:loc-preserves-cmon-axiom-simplicial}, $L_\C$ preserves commutative monoids if and only if $\Sym(-)$ preserves $\C$-local equivalences between cofibrant objects. Note that the condition that the objects be cofibrant is no obstacle, in any model category satisfying the strong commutative monoid axiom, since $\C$ can be taken to be a set of cofibrations between cofibrant objects, and $\Sym^n(X)$ is cofibrant whenever $X$ is cofibrant, if the commutative monoid axiom is satisfied. This follows from the filtration on $\Sym^n(\emptyset)\to \Sym^n(X)$ from Lemma A.3 of \cite{white-commutative-monoids}. Although the positive stable model structure only satisfies the (weak) commutative monoid axiom, one can use the positive flat stable model structure to prove all statements needed for the positive stable model structure, as demonstrated in \cite{white-commutative-monoids}.
\end{remark}
%% Know: If X is cofibrant then Sym^n(X) is cofibrant.
% Proof: Write it as the composite of Sym^n(\emptyset) = Q_0^n \to \dots \to Sym^n(X) = Q^n_n. Each map is formed as a pushout of (powers of X) tensored with pushout-products of $\emptyset \to X$ (which are all cofibrations because cofibrations are symmetrizable). And because X is cofibrant, powers of X are cofibrant in M.

\section{Preservation of Commutative Monoids} 
\label{sec:commutative-preservation-applications}

We turn now to the question of preservation under Bousfield localization of commutative monoids. We will be applying Theorem \ref{thm:loc-preserves-cmon-axiom} and Corollary \ref{cor:loc-pres-alg-for-semi} for this purpose in a moment, but we first remark on a simpler case where the hypotheses of Theorem \ref{thm:loc-preserves-cmon-axiom} are not necessary. 

\subsection{Spectra}

Preservation of commutative monoids by monoidal Bousfield localizations is easy in certain categories of spectra, because of the property that for all cofibrant $X$ in $\M$, the map $(E\Sigma_n)_+ \wedge_{\Sigma_n} X^{\wedge n} \to X^{\wedge n}/\Sigma_n$ is a weak equivalence. This property was first noticed in \cite{EKMM}, and we will now discuss it more generally.

Recall that two operads $O$ and $P$ are said to satisfy \textit{rectification} if $P$-alg and $O$-alg are Quillen equivalent model categories. In \cite{white-commutative-monoids}, we introduced the \textit{rectification axiom}, which states that if $Q_{\Sigma_n}S\to S$ is cofibrant replacement for $S$ in $\M^{\Sigma_n}$ then for all cofibrant $X$ in $\M$, the map $Q_{\Sigma_n}S \otimes_{\Sigma_n} X^{\otimes n} \to X^{\otimes n}/\Sigma_n$ is a weak equivalence (this is the natural generalization of the property from \cite{EKMM} mentioned above, and was further generalized in \cite{white-yau3}). Observe that this property automatically holds on $L_\C(\M)$ if it holds on $\M$, because the cofibrant objects are the same and the weak equivalences are contained in the $\C$-local equivalences. We now prove that in the presence of the rectification axiom, preservation results for commutative monoids are particularly nice.

\begin{theorem} \label{thm:preservation-if-rectification}
Let $QCom$ denote a $\Sigma$-cofibrant replacement of $Com$ in $\M$. Let $\M$ be a monoidal model category satisfying the conditions of Theorem 4.6 of \cite{white-commutative-monoids}, so that the rectification axiom implies that $QCom$ and $Com$ rectify. Let $L_\C$ be a monoidal Bousfield localization. Then $L_\C$ preserves commutative monoids. In particular:

\begin{itemize}
\item For positive (flat) symmetric spectra, positive (flat) orthogonal spectra, or $\mathbb{S}$-modules, $QCom$ is $E_\infty$ and any monoidal Bousfield localization preserves strict commutative ring spectra.
\item For positive (flat) $G$-equivariant orthogonal spectra, $QCom$ is $E_\infty^G$ and any monoidal Bousfield localization preserves strict commutative equivariant ring spectra.
\end{itemize}
\end{theorem}

\begin{proof}
Let $E$ be a commutative monoid, so in particular $E$ is a $QCom$ algebra via the map $QCom \to Com$. Because $QCom$ is $\Sigma$-cofibrant, $QCom$-algebras in both $\M$ and $L_\C(\M)$ inherit semi-model structures, so Corollary \ref{cor:loc-pres-alg-for-semi} implies $L_\C(E)$ is weakly equivalent to some $QCom$-algebra $E_Q$. The rectification  axiom in $L_\C(\M)$ now implies $E_Q$ is weakly equivalent to a commutative monoid $\hat{E}$.
\end{proof}

Currently, this result is only known to apply to the categories of spectra listed in the statement of the theorem. We conjectured in \cite{white-commutative-monoids} that the rectification axiom implies rectification between $QCom$ and $Com$ for general $\M$. If this conjecture is proven then the theorem will apply to all $\M$ satisfying the rectification axiom. Even if the conjecture is false, the following proposition demonstrates that when $\M$ satisfies the rectification axiom then the conditions of Theorem \ref{thm:loc-preserves-cmon-axiom} are satisfied and so any monoidal localization preserves commutative monoids.

\begin{prop} \label{prop:rectification-implies-sym}
Suppose $\sN$ is a monoidal model category satisfying the rectification axiom. Then $\Sym^n(-)$ takes trivial cofibrations between cofibrant objects to weak equivalences.

In particular, if $L_\C(\M)$ is a monoidal Bousfield localization and $\M$ satisfies the rectification axiom, then $L_\C$ preserves commutative monoids.
\end{prop}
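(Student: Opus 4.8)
The plan is to extract everything from the comparison map furnished by the rectification axiom. Fix $n$ and write $\Phi_n(-) := Q_{\Sigma_n}S \otimes_{\Sigma_n} (-)^{\otimes n} : \sN \to \sN$, so that the rectification axiom gives, naturally in the cofibrant object $X$, a weak equivalence $\Phi_n(X) \to \Sym^n(X)$. For a trivial cofibration $f : X \to Y$ between cofibrant objects, naturality produces a commutative square whose top and bottom are these rectification equivalences for $X$ and $Y$ (genuine weak equivalences, since $X$ and $Y$ are cofibrant), whose left edge is $\Phi_n(f) = Q_{\Sigma_n}S \otimes_{\Sigma_n} f^{\otimes n}$, and whose right edge is $\Sym^n(f)$. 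By two out of three it therefore suffices to show that $\Phi_n(f)$ is a weak equivalence.

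I would do this in two steps. First, $f^{\otimes n} : X^{\otimes n} \to Y^{\otimes n}$ is a weak equivalence in $\sN$: it is the composite of the maps $\mathrm{id}_{Y^{\otimes k}} \otimes f \otimes \mathrm{id}_{X^{\otimes(n-1-k)}}$ for $k = 0, \dots, n-1$, each of which is the left Quillen functor $(-)\otimes(Y^{\otimes k}\otimes X^{\otimes(n-1-k)})$ — its argument being cofibrant by the pushout product axiom — applied to the weak equivalence $f$ between cofibrant objects, hence a weak equivalence by Ken Brown's lemma. Second, $\Phi_n$ sends weak equivalences of $\sN^{\Sigma_n}$ (i.e. underlying weak equivalences in $\sN$ of objects carrying a $\Sigma_n$-action) to weak equivalences of $\sN$: the object $Q_{\Sigma_n}S$ is cofibrant in the projective model structure on $\sN^{\Sigma_n}$, so it is a retract of a cell complex built from free cells $\Sigma_n\cdot i$ with $i$ a cofibration between cofibrant objects of $\sN$; applying $Q_{\Sigma_n}S\otimes_{\Sigma_n}(-)$ to a free cell yields a tensor with a cofibrant object of $\sN$, and a cell-by-cell induction using left properness of $\sN$ and Kan's cube lemma shows that $Q_{\Sigma_n}S\otimes_{\Sigma_n}(-)$ then preserves all weak equivalences. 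This equivariant flatness statement is the kind of input developed in \cite{white-commutative-monoids}. Combining the two steps, $\Phi_n(f)$, and hence $\Sym^n(f)$, is a weak equivalence.

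The real content is the second step, namely that a projectively cofibrant object of $\sN^{\Sigma_n}$ acts flatly on $\sN$ — i.e. that $Q_{\Sigma_n}S\otimes_{\Sigma_n}(-)$ preserves weak equivalences between objects with the non-free permutation actions on $X^{\otimes n}$, not merely between cofibrant ones; everything else is formal manipulation of the rectification square together with repeated use of the pushout product axiom. An equivalent route is to apply the left adjoint $\Phi_n$ to the $\Sigma_n$-equivariant filtration $X^{\otimes n}=Q_0^n\to\cdots\to Q_n^n=Y^{\otimes n}$ appearing in the proof of Lemma \ref{lemma:boxprod-equiv-if-sym} and to check that each successive pushout is along $res_{\Sigma_{n-q}\times\Sigma_q}(Q_{\Sigma_n}S)\otimes_{\Sigma_{n-q}\times\Sigma_q}(X^{\otimes(n-q)}\otimes f^{\boxprod q})$, which is a weak equivalence because $X^{\otimes(n-q)}\otimes f^{\boxprod q}$ is a trivial cofibration (pushout product axiom) and restriction along the finite-index subgroup $\Sigma_{n-q}\times\Sigma_q$ is left Quillen; this still rests on the same equivariant flatness fact.

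For the concluding assertion: as $L_\C$ is a monoidal Bousfield localization, $L_\C(\M)$ is a monoidal model category with cofibrant objects flat, it is left proper, and it satisfies the rectification axiom (the cofibrant objects and the functors $\Sym^n$ are unchanged, and every weak equivalence of $\M$ is a $\C$-local equivalence). Hence the first part applies to $L_\C(\M)$ and shows that $\Sym^n$ carries trivial cofibrations of $L_\C(\M)$ between cofibrant objects to $\C$-local equivalences; since $\M$, and therefore $L_\C(\M)$, is tractable, this includes the generating trivial cofibrations $J_\C$ as well as the maps of $\C$. This is exactly the hypothesis of Theorem \ref{thm:loc-preserves-cmon-axiom} (or Theorem \ref{thm:loc-preserves-cmon-axiom-simplicial} in the simplicial case), so $L_\C(\M)$ satisfies the (strong) commutative monoid axiom. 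Then Corollary \ref{cor:semi-on-comm-alg} equips the categories of commutative monoids in $\M$ and in $L_\C(\M)$ with inherited semi-model structures, and Corollary \ref{cor:loc-pres-alg-for-semi} applied to the operad $Com$ gives that $L_\C$ preserves commutative monoids.
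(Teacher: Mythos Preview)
Your proposal is correct and follows the same architecture as the paper's proof. The paper simply cites Proposition~4.6 of \cite{white-commutative-monoids} for the first assertion, whereas you actually sketch its content---the rectification square reduces $\Sym^n(f)$ to $\Phi_n(f)$, and the latter is handled by the equivariant flatness of the projectively cofibrant object $Q_{\Sigma_n}S$---and you are right to flag that this flatness statement (that $Q_{\Sigma_n}S\otimes_{\Sigma_n}(-)$ preserves weak equivalences between the \emph{non}-projectively-cofibrant objects $X^{\otimes n}$) is the genuine input; your cell-by-cell cube-lemma outline is essentially how that proposition is proven, though it tacitly uses that cofibrant objects are flat in $\sN$, which is not part of the bare hypotheses of the statement.

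For the second assertion your route is identical to the paper's: observe that the rectification axiom passes to $L_\C(\M)$, apply the first part there, feed the conclusion into Theorem~\ref{thm:loc-preserves-cmon-axiom}, and finish with Corollary~\ref{cor:semi-on-comm-alg} and Corollary~\ref{cor:loc-pres-alg-for-semi}. One small caveat worth being explicit about (the paper glosses over it too): invoking Theorem~\ref{thm:loc-preserves-cmon-axiom} requires the strong commutative monoid axiom on $\M$ and tractability, neither of which is stated in the proposition; these are implicit standing hypotheses in the paper's applications, but your write-up would be cleaner if you named them.
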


\begin{proof}
The first part is proven as Proposition 4.6 in \cite{white-commutative-monoids}, and we refer the reader there for a proof. For the second part, we apply the first part with $\sN = L_\C(\M)$, using our observation that the rectification axiom holds on $L_\C(\M)$ whenever it holds on $\M$. Thus, $\Sym^n:L_\C(\M)\to L_\C(\M)$ takes $\C$-local trivial cofibrations between cofibrant objects to $\C$-local equivalences. In particular, the hypotheses of Theorem \ref{thm:loc-preserves-cmon-axiom} are satisfied and we may deduce from Corollary \ref{cor:loc-pres-alg-for-semi} that $L_\C$ preserves commutative monoids.
\end{proof}

\subsection{Spaces}

We turn our attention now to simplicial sets and topological spaces. Rectification is known to fail (see Example 4.4 in \cite{white-commutative-monoids}), so even though all localizations are monoidal we may not apply the result above. For spaces the path connected commutative monoids are weakly equivalent to generalized Eilenberg-Mac Lane spaces, i.e. products of Eilenberg-Mac Lane spaces. Preservation of commutative monoids has been proven for pointed CW complexes as Theorem 1.4 in \cite{eternal-preprint}.

\begin{theorem} \label{thm:GEMs}
Let $\M$ be the category of pointed CW complexes. Let $\C$ be any set of maps. Then $\Sym(-)$ preserves $\C$-local equivalences and $L_\C$ sends GEMs to GEMs.
\end{theorem}

As a special case of this theorem, we recover classical results of Bousfield, e.g. parts of Theorem 5.1 and Lemma 9.8 from \cite{bous-spaces}. The proof of Theorem \ref{thm:GEMs} is based on work of Dror Farjoun,  Chapter 4 of \cite{farjoun}, in the setting of $\M = sSet$. That work is generalized in \cite{white-commutative-monoids} to hold for the category of $k$-spaces. So we may extend the theorem above to $k$-spaces as well. Observe that the theorem above implies both $sSet$ and $k$-spaces satisfy the conditions of Theorem \ref{thm:loc-preserves-cmon-axiom} because $\Sym^n$ is a retract of $\Sym$. 

\begin{theorem}
Let $\M$ be either simplicial sets or $k$-spaces. Then every Bousfield localization preserves GEMs.
\end{theorem}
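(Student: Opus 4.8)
The plan is to feed the three ingredients assembled in this section and the previous one into the general preservation machinery of Section \ref{sec:main-preservation}. First I would record that by Proposition \ref{prop:monoidal-loc-spaces} every Bousfield localization $L_\C$ of $sSet$ or $k$-spaces is automatically a monoidal Bousfield localization, so the standing hypotheses of Theorem \ref{bigthm} and Corollary \ref{cor:loc-pres-alg-for-semi} are in force. Next I would note that both $sSet$ and $k$-spaces are cofibrantly generated, tractable, simplicial model categories satisfying the strong commutative monoid axiom (the latter is verified in \cite{white-commutative-monoids}), so they meet the hypotheses on $\M$ in Theorem \ref{thm:loc-preserves-cmon-axiom-simplicial} (equivalently, the reduction performed in the proof of Theorem \ref{thm:loc-preserves-cmon-axiom} applies).

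The crucial input is the preceding theorem: the Farjoun-based argument of \cite{farjoun} and \cite{eternal-preprint}, generalized in \cite{white-commutative-monoids} to $k$-spaces, shows that $\Sym(-)$ preserves $\C$-local equivalences in these categories. Since $\Sym^n$ is a retract of $\Sym$ as an endofunctor of $\M$, $\Sym^n$ also preserves $\C$-local equivalences; in particular $\Sym^n(f)$ is a $\C$-local equivalence for every $f \in \C$ and every $n$. This is exactly the hypothesis needed to invoke Theorem \ref{thm:loc-preserves-cmon-axiom-simplicial} (and since the generating trivial cofibrations $J_\C$ of $L_\C(\M)$ are themselves $\C$-local equivalences, Theorem \ref{thm:loc-preserves-cmon-axiom} applies equally well), so $L_\C(\M)$ satisfies the strong commutative monoid axiom. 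With this axiom available in both $\M$ and $L_\C(\M)$, Corollary \ref{cor:semi-on-comm-alg} (or Theorem \ref{thm:commMonModel}, since $sSet$ and $k$-spaces also satisfy the monoid axiom and the relevant smallness conditions) endows commutative monoids in each of the two monoidal model categories with an inherited (semi-)model structure; applying Theorem \ref{bigthm}, or Corollary \ref{cor:loc-pres-alg-for-semi} in the semi-model case, to the operad $P=Com$ then shows that $L_\C$ preserves commutative monoids.

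It remains to translate preservation of commutative monoids into the statement about GEMs. As recalled at the start of this subsection, the path-connected commutative monoids in spaces are precisely, up to weak equivalence, the products of Eilenberg--Mac Lane spaces, so for a GEM $E$ one chooses a commutative-monoid model of $E$, which $L_\C$ sends to an object weakly equivalent to a commutative monoid, hence again to a GEM. I expect this last identification to be the only step requiring real care, rather than bookkeeping: it is the reason the preceding theorem is phrased directly in terms of GEMs, and I would lean on that theorem (together with the Dold--Thom description of $\Sym$ on spaces) rather than re-derive it. Everything else amounts to checking that $sSet$ and $k$-spaces satisfy the hypotheses of the abstract results already proved, the one genuinely substantive black box being Farjoun's theorem that $\Sym$ preserves $\C$-local equivalences.
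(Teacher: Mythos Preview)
Your proposal is correct and matches the paper's approach: both rely on Farjoun's theorem that $\Sym$ preserves $\C$-local equivalences (extended to $k$-spaces in \cite{white-commutative-monoids}), the retract observation $\Sym^n \hookrightarrow \Sym$, and the identification of path-connected commutative monoids with GEMs. The paper's argument is essentially the paragraph preceding the theorem statement, and your expansion through Theorem~\ref{thm:loc-preserves-cmon-axiom} and Corollary~\ref{cor:loc-pres-alg-for-semi} is exactly what the paper gestures at when it says the conditions of Theorem~\ref{thm:loc-preserves-cmon-axiom} are satisfied; the only difference is one of emphasis, since the paper treats the GEM conclusion as coming more directly from the extension of the \cite{eternal-preprint} result, whereas you route it explicitly through preservation of $Com$-algebras before translating back to GEMs.
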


Thus, we have extended the result above and Theorem 4.B.4 in \cite{farjoun} to a wider class of topological spaces than spaces having the homotopy type of a CW complex.

\subsection{Chain Complexes}

When $k$ is a field of characteristic zero, there are model structures on $Ch(k)_{\geq 0}$ (resp. $Ch(k)$) where the weak equivalences are quasi-isomorphisms, the fibrations are degreewise surjections, and the cofibrations are the degreewise split monomorphisms. All operads are $\Sigma$-cofibrant, hence all operad-algebras are preserved by any monoidal Bousfield localization. That cofibrant objects are flat is an easy exercise, using the observation that, for every cofibrant $A$, the map $A \otimes QX \to A\otimes X$ induced by cofibrant replacement (i.e. projective resolution) is a quasi-isomorphism.
%This is why taking $B\otimes^L X$ only requires one of them to be cofibrant (comes up in Kunneth theorem).

\begin{proposition}
Let $k$ be a field of characteristic zero. The only Bousfield localizations of $Ch(k)_{\geq 0}$ are truncations.
\end{proposition}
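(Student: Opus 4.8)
The plan is to exploit that over a field every chain complex is formal, so that $\Ho(Ch(k)_{\ge 0})=D_{\ge 0}(k)$ is semisimple: every object $X$ is isomorphic to $\bigoplus_{n\ge 0}H_n(X)[n]$, there are no nonzero maps between the sphere complexes $S^n$ and $S^m$ for $n\ne m$ (here $S^n$ denotes $k$ placed in degree $n$), and the derived mapping space satisfies $\pi_j\map(S^n,Z)\cong H_{n+j}(Z)$ for $j\ge 0$. In particular $\map(S^n,Z)\simeq\ast$ if and only if $H_i(Z)=0$ for all $i\ge n$. Every object of $Ch(k)_{\ge 0}$ is cofibrant, so the standing reduction to $\C$ a set of cofibrations between cofibrant objects costs nothing, and each $f\in\C$ is determined by the graded vector space map $H_*(f)$, which splits as an isomorphism onto its image together with a zero map; write $K=\ker H_*(f)$ and $W=\coker H_*(f)$.

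First I would study the set $\mathcal{T}=\{\,n\ge 0 : S^n \text{ is } \C\text{-locally trivial}\,\}$ and show it is upward closed. The localization functor $\mathrm{Id}\colon\M\to L_\C(\M)$ is left Quillen (the cofibrations agree, and the trivial cofibrations of $\M$ are $\C$-local equivalences), hence preserves homotopy cofibers; since $S^{n+1}$ is the homotopy cofiber of $S^n\to 0$, if $S^n$ is $\C$-locally trivial then so is $S^{n+1}$. Thus $\mathcal{T}$ is either empty or of the form $\{\,n : n\ge n_0\,\}$ with $n_0=\min\mathcal{T}$.

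Second, I would show that if some $f\colon A\to B$ in $\C$ is not an $\M$-weak equivalence, then $\mathcal{T}\ne\emptyset$. Suppose first that $W$ is nonzero in degree $m$. Over a field one may realize in $\Ho(\M)$ a split surjection $p\colon B\to W$ with $p\circ f=0$ together with a section $s\colon W\to B$, and $S^m$ is a retract of $W$. For any $\C$-local $Z$ the map $\map(f,Z)$ is an equivalence, and $\map(f,Z)\circ\map(p,Z)=\map(p\circ f,Z)=0$, so $\map(p,Z)=0$; hence $\operatorname{id}_{\map(W,Z)}=\map(s,Z)\circ\map(p,Z)=0$, so $\map(W,Z)\simeq\ast$, and therefore $\map(S^m,Z)\simeq\ast$. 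Thus $m\in\mathcal{T}$. The case in which $K$ is nonzero in degree $n$ is symmetric, using a split injection $S^n\hookrightarrow A$ killed by $f$. Consequently, if $\mathcal{T}=\emptyset$ then every map of $\C$ is already an $\M$-weak equivalence and $L_\C(\M)=\M$ (the identity localization, which one may regard as the ``truncation at $\infty$''); otherwise $\mathcal{T}=\{\,n : n\ge n_0\,\}$.

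Finally I would identify $L_\C(\M)$ with the truncation $L_{\{0\to S^{n_0}\}}(\M)$, whose $\C$-local objects are exactly the complexes with homology concentrated in degrees below $n_0$. Since a left Bousfield localization of $\M$ is determined by its class of local (fibrant) objects, it suffices to check that the two localizations have the same local objects. The membership $n_0\in\mathcal{T}$ says precisely that every $\C$-local $Z$ satisfies $\map(S^{n_0},Z)\simeq\ast$, i.e.\ $H_i(Z)=0$ for all $i\ge n_0$. Conversely, if $H_i(Z)=0$ for $i\ge n_0$ and $f\in\C$, then under the splitting of $H_*(f)$ the map $\map(f,Z)$ is, up to the decompositions $\map(B,Z)\cong\map(\im,Z)\times\map(W,Z)$ and $\map(A,Z)\cong\map(\im,Z)\times\map(K,Z)$, the product of an equivalence with a zero map; as $K$ and $W$ are supported in degrees $\ge n_0$ (they lie in $\mathcal{T}$ by the previous step), the formula for $\pi_j\map(S^m,Z)$ gives $\map(K,Z)\simeq\map(W,Z)\simeq\ast$, so $\map(f,Z)$ is an equivalence and $Z$ is $\C$-local. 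The step carrying the real content — the rest being bookkeeping in a semisimple homotopy category — is the second one: inverting a non-isomorphism of chain complexes over a field is forced to annihilate entire sphere complexes, which is exactly the phenomenon that prevents any Bousfield localization finer than a truncation.
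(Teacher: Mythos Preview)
Your proof is correct and follows essentially the same line as the paper's: both exploit that over a field the homotopy category of $Ch(k)_{\ge 0}$ is semisimple (every object a sum of sphere complexes, no maps between distinct spheres), reduce an arbitrary localization to the nullification of some collection of spheres, and then identify that nullification with truncation at the minimal degree killed. The paper's argument is a terse sketch (``adding weak equivalences is equivalent to killing some object\dots\ the localization is completely determined by the lowest dimension in which the first nullification occurs''), whereas you supply the honest details --- in particular the retract argument in your second step, showing that inverting any non-quasi-isomorphism forces some $S^m$ to be $\C$-locally trivial, is exactly the content the paper's phrase ``all localizations are nullifications'' is gesturing at.
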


\begin{proof}
Over any PID, the homotopy type is determined by $H_*$, so this means adding weak equivalences is equivalent to killing some object. Thus, all localizations are nullifications. All objects are wedges of spheres, and killing $k^2$ in degree $n$ is the same as killing $k$ in degree $n$. Thus, the localization is completely determined by the lowest dimension in which the first nullification occurs. The localization is therefore equivalent to $0\to V$ where $V$ is the sphere on $k$ in that dimension. 
\end{proof}

\begin{corollary}
All Bousfield localizations of $Ch(k)_{\geq 0}$ are monoidal and hence preserve algebras over any operad $P$.
\end{corollary}

% Need to argue that you don't need $\Sigma$-cofibrancy, because you only care about this $((\emptyset \to Z)\boxprod f^{\boxprod n})_{\Sigma_n}$ and so Lemma 2.5.2 in \cite{bm06} does the job regardless of the choice of weak equivalences.

\begin{remark}
For unbounded chain complexes, truncations need not preserve algebraic structure. For example, if $f:S^{-2}\to D^{-3}$ gets inverted then just as with the Postnikov Section, an algebra will be taken to an object with no unit. 
\end{remark}

Quillen proved in Proposition 2.1 of Appendix B of \cite{quillen-rational-annals} that bounded chain complexes over a field of characteristic zero satisfies the commutative monoid axiom. The proof that all quasi-isomorphisms are closed under $\Sym^n$ goes via cofiber and the 5-lemma on homology groups. The key observation is that $\Sym^n(-)$ preserves group isomorphisms. The same proof demonstrates that $\Sym^n$ preserves $\C$-local equivalences for all $L_\C$ as above. Hence, all Bousfield localizations of $Ch(k)_{\geq 0}$ preserve commutative differential graded algebras. Of course, this can also be seen directly from the description of $L_\C$ as a truncation.

\subsection{Equivariant Spectra}

We conclude this section by returning to our motivating example, Example \ref{example:hill}. Throughout this section, $G$ is a finite group, since otherwise we do not know how to transfer a semi-model structure to commutative equivariant ring spectra. Note that when it was discovered, Example \ref{example:hill} represented a potential gap in the proof of the Kervaire Invariant One Theorem (because the spectrum $\Omega = D^{-1}MU^{(4)}$ needed to be commutative for the computations in \cite{kervaire}). Thankfully, the following theorem from \cite{hill-hopkins} demonstrates that $\Omega$ was indeed commutative.

\begin{theorem} \label{thm:hill-hopkins-preservation} Let $G$ be a finite group. Let $L$ be a localization of equivariant spectra. If for all $L$-acyclics $Z$ and for all subgroups $H$, $N_H^GZ$ is $L$-acyclic, then for all commutative $G$-ring spectra $R$, $L(R)$ is a commutative $G$-ring spectrum.
\end{theorem}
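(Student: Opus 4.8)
The plan is to identify $L(R)$ with $L_\C(R)$ for an appropriate set $\C$ and then invoke Theorem~\ref{bigthm} for the commutative operad $Com$, whose algebras are exactly the commutative $G$-ring spectra. Since every localization of $\Sp^G$ is a left Bousfield localization and $\Sp^G$ is left proper and cellular, we may, as in Section~\ref{sec:background}, take $\C$ to be a set of cofibrations between cofibrant objects with $L = L_\C$ and with $L_\C(\Sp^G)$ existing. By Theorem~\ref{bigthm} --- or, since $Com$ is not $\Sigma$-cofibrant so that we may only have semi-model structures available, by Corollary~\ref{cor:loc-pres-alg-for-semi} --- it then suffices to check two things: that $L_\C$ is a monoidal Bousfield localization, so $L_\C(\Sp^G)$ is again a monoidal model category; and that both $Comm(\Sp^G)$ and $Comm(L_\C(\Sp^G))$ inherit (semi-)model structures from the underlying categories. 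Granting these, the cited results produce, for every commutative $G$-ring spectrum $R$, a commutative $G$-ring spectrum weakly equivalent in $\Sp^G$ to $L_\C(R)$ (and, when $R$ is cofibrant, a ring lift of the localization map $R \to L_\C(R)$), which is exactly the assertion.

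That $Comm(\Sp^G)$ inherits a model structure is a theorem of Hill--Hopkins--Ravenel (see \cite{kervaire}); in the language of the companion paper \cite{white-commutative-monoids} it is the statement that $\Sp^G$ satisfies the \emph{commutative monoid axiom}. So the crux is to show that $L_\C$ preserves this axiom, equivalently that $Comm(L_\C(\Sp^G))$ inherits a (semi-)model structure; this is carried out in Section~\ref{sec:preservation-of-commutative}. The standard mechanism is that a pushout in $Comm$ of $\Sym(f)$ along a map $\Sym(X) \to A$ carries a functorial filtration whose subquotients are built, by smashing with $A$ and passing to colimits, out of the pushout-product powers $f^{\boxprod n}/\Sigma_n$. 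Hence the trivial-cofibration half of the commutative monoid axiom for $L_\C(\Sp^G)$ reduces to showing that $f^{\boxprod n}/\Sigma_n$ is an $L_\C$-equivalence whenever $f$ is an $L_\C$-trivial cofibration, and further to the case of generating trivial cofibrations. In $\Sp^G$ these have the form $G/H_+ \wedge (\text{a sphere inclusion})$, and the point is to analyze the equivariant symmetric power $(G/H_+ \wedge Z)^{\wedge n}/\Sigma_n$: the $\Sigma_n$-orbits on $(G/H)^n$ have wreath-product stabilizers, and the associated summands are inductions of norms $N_K^G$ applied to smaller smash powers of $Z$. Feeding in the hypothesis that each $N_K^G$ carries $L_\C$-acyclics to $L_\C$-acyclics --- together with the fact that $L_\C$ is monoidal, so $L_\C$-acyclics are closed under smashing with cofibrant objects by Proposition~\ref{cofObjFlat} --- shows each summand, hence $f^{\boxprod n}/\Sigma_n$, is an $L_\C$-equivalence.

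The monoidal Bousfield localization condition should be disposed of along the same lines: by Theorem~\ref{thm:PPAxiom} (or Theorem~\ref{thm:PPAxiom-nontractable} if tractability is in doubt) it suffices to see that $f \otimes id_K$ is an $L_\C$-equivalence for $f \in \C$ and $K$ a domain or codomain of a generating cofibration, and writing $K = G/H_+ \wedge S$ reduces this to the same norm bookkeeping ($N_e^G Z = Z^{\wedge |G|}$ for the trivial subgroup, wreath-product combinatorics in general); alternatively one may simply add the hypothesis that $L_\C$ is a monoidal Bousfield localization to the statement, which holds automatically for the localizations $L_{a_\lambda}$ relevant to \cite{kervaire}. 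The step I expect to be the genuine obstacle is the combinatorial analysis of $(G/H_+ \wedge Z)^{\wedge n}/\Sigma_n$ and the identification of its associated-graded pieces with norm functors --- in particular, keeping track of \emph{all} the subgroups $K$ that can arise as stabilizers (this is precisely why the hypothesis must range over all subgroups $H$) and translating the combinatorics of $\Sigma_n$-orbits into the $(N_K^G, res_K)$ adjunction. Once that is settled, everything downstream --- transferring the (semi-)model structure across the free commutative algebra adjunction and applying Theorem~\ref{bigthm} --- is formal; indeed one could bypass the general machinery and argue directly by running the small object argument in $Comm(L_\C(\Sp^G))$ against the localized generating trivial cofibrations.
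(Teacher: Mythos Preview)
This theorem is not proved in the paper; it is quoted from \cite{hill-hopkins}, and the paper only remarks that the original argument runs the EKMM-style skeletal filtration with the norm hypothesis supplying the missing step. The paper's own recovery of the result (in Section~\ref{sec:commutative-preservation-applications}) is quite different from your outline: rather than verifying the commutative monoid axiom in $L_\C(\Sp^G)$ by hand, it invokes the \emph{rectification axiom}. Since $\Sp^G$ satisfies rectification, Proposition~\ref{prop:rectification-implies-sym} shows that $\Sym^n$ automatically sends $\C$-local trivial cofibrations between cofibrant objects to $\C$-local equivalences in any monoidal localization, so Theorem~\ref{thm:preservation-if-rectification} applies directly. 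In other words, the paper bypasses the wreath-product/norm combinatorics you identify as the hard step, replacing it by the single observation that rectification is inherited by $L_\C(\M)$ because cofibrant objects and weak equivalences only get larger. Your route --- decomposing $(G/H_+ \wedge Z)^{\wedge n}/\Sigma_n$ into norm pieces --- is essentially the original Hill--Hopkins argument rephrased in the language of the commutative monoid axiom; it works but is the harder path.

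There is a genuine gap in your treatment of monoidality. You write that checking $f \otimes id_K$ is an $L_\C$-equivalence for $K = G/H_+ \wedge S$ ``reduces to the same norm bookkeeping'', citing $N_e^G Z = Z^{\wedge |G|}$. But smashing with $(G/H)_+$ is \emph{induction} $G_+ \wedge_H res_H(-)$, an additive left adjoint, whereas $N_H^G$ is the multiplicative norm; the norm hypothesis says nothing direct about induction. The implication from the norm hypothesis to monoidality does hold (and is among the equivalent conditions in \cite{hill-hopkins}), but it is not a consequence of the combinatorics you sketch; it requires a separate argument. Your fallback of ``adding monoidality as a hypothesis'' would weaken the statement relative to what is being claimed. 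The paper sidesteps this entirely because, once rectification is on the table, Theorem~\ref{thm:preservation-if-rectification} only needs $L_\C$ to be monoidal, and the paper simply asserts (end of Section~\ref{sec:commutative-preservation-applications}) that its monoidality condition from Theorem~\ref{thm:localization-preservation-families} is sharper than the Hill--Hopkins norm condition, deferring the comparison to \cite{hill-hopkins} and \cite{blumberg-hill}.
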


The hypothesis in this theorem is designed so that the proof in \cite{EKMM} regarding preservation of $E_\infty$-structure under localization (i.e. via the skeletal filtration) may go through. We now specialize our preservation result to the context of $G$-spectra, by combining Theorem \ref{bigthm}, Theorem \ref{thm:loc-preserves-cmon-axiom-simplicial}, and Remark \ref{remark:batanin-white}. Recall from Proposition \ref{prop:stable-implies-monoidal} that monoidal localizations are stable. In order to have a transferred semi-model structure on commutative monoids, we need to work with either the positive stable model structure on $G$-spectra (of Theorem 14.2 of \cite{MMSS}), the positive flat stable model structure (of Theorem 2.3.27 of \cite{stolz-thesis}), or the positive complete stable model structure (of Proposition B.4.1 of \cite{kervaire}). The proof that these model structures are left proper and cellular can be found in the appendix of \cite{gutierrez-white-equivariant}.

\begin{theorem} \label{thm:recover-hill-preservation}
Let $\M$ denote any of the positive model structure on $G$-spectra discussed above. Suppose $L_\C$ is a monoidal left Bousfield localization. Then the following are equivalent:
\begin{enumerate}
\item $L_\C$ preserves commutative equivariant ring spectra,
\item $\Sym^n(-)$ preserves local equivalences between cofibrant objects, for all $n$,
\item $\Sym^n(-)$ takes maps in $\C$ to local equivalences, and 
\item $\Sym^n(-)$ preserves $L$-acyclicity for all $n$.
\end{enumerate}
\end{theorem}
% My condition regarding Sym^n implies Hill's condition about R-acyclicity. To see this, consider $\Sym^n(aR \to \ast)$ and you see that $\Sym^n(aR)$ is $R$-acyclic (since $\Sym^n(\ast)\cong B\Sigma_n$ is a wedge of Eilenberg-Maclane spaces and is $R$-acyclic), hence $\Sym^n(-)$ takes all $R$-acyclics to $R$-acyclics.

That preservation of $L$-acyclics is the same as preservation of $L$-local equivalences as can be seen via the rectification axiom and the property that cofibrant objects are flat, but it is easier to observe that this equivalence holds for any stable localization in any stable model category (because consideration of cofibers allows one to reduce to the study of nullifications). In \cite{hill-hopkins}, several equivalent conditions are given in order for a localization to preserve commutative structure. Condition (4) above is equivalent to the condition that, for all $L$-acyclics $Z$ and for all subgroups $H$, $N_H^GZ$ is $L$-acyclic. Hence, Theorem \ref{thm:recover-hill-preservation} sharpens Theorem \ref{thm:hill-hopkins-preservation} to make it an `if and only if' result. 

Another equivalent formulation states that preservation occurs whenever the functors $(E_G \Sigma_n)_+ \wedge_{\Sigma_n} (-)^{\wedge n}$ preserve $L$-acyclicity. This condition can be verified via the skeletal filtration of $E_G \Sigma_n$ into a homotopy colimit of $\Sigma_n$-free $G \times \Sigma_n$ sets $T$ of the form $(G \times \Sigma_n)/\Gamma$ where $\Gamma$ is the graph of a subgroup. This formulation of what is required for $L$ to preserve commutativity is at the heart of the arguments in \cite{kervaire} and \cite{blumberg-hill} and allows for the preservation machinery to be extended to $N_\infty$-operads in \cite{gutierrez-white-equivariant}. The condition is analogous to the non-equivariant condition that functors $(E \Sigma_n)_+ \wedge_{\Sigma_n} (-)^{\wedge n}$ preserve $L$-acyclicity.

%Pf: First note that, when Z is R-acyclic, then R \wedge Z is contractible (here G acts diagonally). Next, consider R wedge (E_G \Sigma_n)_+ \wedge_{\Sigma_n} X^n. Now filter E_G \Sigma_n into a hocolimit of \Sigma_n-free G x Sigma_n sets T (indeed, can arrange it so that T is (G x Sigma_n)/Gamma for some graph Gamma, as we know because cof rep in that model structure). We must show that T_+ \wedge_{\Sigma_n} X^n is R-acyclic. Now, the whole point of norms is that this is equivalent to G_+ \wedge_H N_H^G(res_H(X^n)). You can take K fixed points for any K and this is a point, so itâ€™s equivariantly contractible. 

The Appendix to \cite{blumberg-hill} proves that, for any complete $N_\infty$-operad $P$ whose spaces have the homotopy type of $G\times \Sigma_n$-CW complexes, then $P$-algebras are Quillen equivalent to commutative monoids. Hence, Theorem \ref{thm:preservation-if-rectification} implies the same conditions from Theorem \ref{thm:recover-hill-preservation} are equivalent to preservation of $P$-algebras for any (hence all) complete $N_\infty$-operad $P$ whose spaces have the homotopy type of $G\times \Sigma_n$-CW complexes. Preservation results for non-complete $N_\infty$-operads and for $\sF-N_\infty$-operads can be found in Section 7 of \cite{gutierrez-white-equivariant}.

\section{Bousfield localization and the monoid axiom}
\label{sec:localization-and-monoid-ax}

Recall that the monoid axiom is required to transfer a full model structure to the category of monoids in a monoidal model category \cite{SS00}. However, Theorem \ref{thm:spitzweck} demonstrates that there is a transferred semi-model structure even if the monoid axiom is not satisfied. It follows from Corollary \ref{cor:loc-pres-alg-for-semi}, that our preservation results do not require $L_\C(\M)$ to satisfy the monoid axiom. However, %having found conditions so that the pushout product axiom, commutative monoid axiom, and property that cofibrant objects are flat transfer to $L_\C(\M)$, we feel we should include a word on how to obtain the monoid axiom for $L_\C(\M)$ in case the reader is interested in studying the monoidal model category $L_\C(\M)$ for a purpose other than the preservation of operad-algebra structure. T
the monoid axiom is an important part of the study of monoidal model categories, with many applications beyond the ability to transfer a model structure to monoids, and in this section we provide a result that guarantees it holds on $L_\C(\M)$.

We remark that Proposition 3.8 of \cite{BarnesSplitting} proves that $L_\C(\M)$ inherits the monoid axiom from $\M$ if $L_\C$ takes a special form similar to localization at a homology theory. In contrast, our result will place no hypothesis on the maps in $\C$ at all, beyond our standing hypothesis that these maps are cofibrations. We additionally remark that the preprint \cite{dmitri} has independently considered the question of when Bousfield localization preserves the monoid axiom, towards the goal of rectification results in general categories of spectra. 

In order to understand when Bousfield localization will preserve the monoid axiom we must introduce a definition, taken from \cite{batanin-berger}. Note that this is a different usage of the term $h$-cofibration than the usage in \cite{EKMM} where it means `Hurewicz cofibration.' The meaning here is for `homotopical cofibration' for reasons which will become clear.

\begin{defn}
A map $f:X\to Y$ is called an \textit{$h$-cofibration} if the functor $f_!:X/\M \to Y/\M$ given by cobase change along $f$ preserves weak equivalences. Formally, this means that in any diagram as below, in which both squares are pushout squares and $w$ is weak equivalence, then $w'$ is also a weak equivalence:
\[
\xymatrix{X \ar[r] \ar[d]_f & A \ar[r]^w \ar[d] & B\ar[d]\\
Y \ar[r] & A' \ar[r]_{w'} & B'}
\]
\end{defn}

It is clear that any trivial cofibration is an $h$-cofibration, by the two out of three property. If $\M$ is left proper then any cofibration is an $h$-cofibration (because $A\to A'$ is automatically a cofibration if $f$ is). In fact, the converse holds as well, and is proven in Lemma 1.2 of \cite{batanin-berger}. Lemma 1.3 proves that $h$-cofibrations are closed under composition, pushout, and finite coproduct.

Now let $\M$ be left proper. An equivalent characterization of an $h$-cofibration is as a map $f$ such that every pushout along $f$ is a homotopy pushout (this the version of the definition above was independently discovered in \cite{white-thesis}). Proposition 1.5 in \cite{batanin-berger} proves that $f$ is an $h$-cofibration if and only if there is a factorization of $f$ into a cofibration followed by a \textit{cofiber equivalence} $w:W\to Y$, i.e. for any map $g:W\to K$ the right-hand vertical map in the following pushout diagram is a weak equivalence:
\begin{align*}
\xymatrix{W \ar[r] \ar[d]_w \po & K \ar[d] \\
X \ar[r] & T}
\end{align*}

We will make use of these various properties of $h$-cofibrations in this section. The purpose for introducing $h$-cofibrations is to make the following definition, which should be thought of as saying that the cofibrations in $\M$ behave like inclusions of closed neighborhood deformation retracts of topological spaces.

\begin{defn}
$\M$ is said to be \textit{$h$-monoidal} if for each (trivial) cofibration $f$ and each object $Z$, $f\otimes Z$ is a (trivial) $h$-cofibration.
\end{defn}

We will find conditions so that Bousfield localization preserves $h$-monoidality, and we will then use this to deduce when Bousfield localization preserves the monoid axiom. In \cite{batanin-berger}, $h$-monoidality is verified for the model categories of topological spaces, simplicial sets, chain complexes over a field (with the projective model structure), symmetric spectra (with the stable projective model structure), and several other model categories not considered in this paper. We now verify $h$-monoidality for the remaining model structures of interest in this paper. We remind the reader that an \textit{injective model structure} has weak equivalences and cofibrations defined levelwise, and fibrations defined by the right lifting property.

\begin{proposition} \label{prop:h-monoidal-sym-spec}
The following eight model structures on symmetric spectra are $h$-monoidal (4 stable and 4 unstable model structures):
\begin{enumerate}
\item The levelwise projective (stable) model structure (of Theorem 5.1.2 in \cite{hovey-shipley-smith}, see also Proposition 1.14 of \cite{batanin-berger}).
\item The positive (stable) model structure (of Theorem 14.1 and 14.2 in \cite{MMSS}).
\item The flat (stable) model structure (of Proposition 2.2 and Theorem 2.4 in \cite{shipley-positive}, there called the $S$-model structure).
\item The positive flat (stable) model structure (obtained by redefining the cofibrations from the model structure above to be isomorphisms in level 0, see Proposition 3.1 in \cite{shipley-positive}).
\end{enumerate}
\end{proposition}

\begin{proof}
We appeal to Proposition 1.9 in \cite{batanin-berger}, and make use of the injective (or injective stable for (5)-(8)) model structure on symmetric spectra, introduced in Definition 5.1.1 (resp. after Definition 5.3.6) of \cite{hovey-shipley-smith}. The references above prove that all eight of the model structures above are monoidal and that both injective model structures are left proper (e.g. because all objects are cofibrant). The final condition in Proposition 1.9 is that for any (trivial) cofibration $f$ and any object $X$, the map $f\otimes X$ is a (trivial) cofibration in the corresponding injective model structure. The cofibration part of this is Proposition 4.15(i) in version 3 of Stefan Schwede's book project \cite{schwede-book-symmetric-spectra}, since for all eight of the model structures above the cofibrations are contained in the flat cofibrations and for any $X$ the map $\emptyset \to X$ is an injective (a.k.a. levelwise) cofibration. The trivial cofibration part is Proposition 4.15(iv) in \cite{schwede-book-symmetric-spectra}, which includes statements for both levelwise and stable weak equivalences.
\end{proof}

We turn now to orthogonal and equivariant orthogonal spectra. We first need a lemma regarding the existence of injective model structures. Recall that $\Delta$-generated spaces are a locally presentable category of topological spaces that admits a combinatorial model structure \cite{dugger-delta-generated, Fajstrup-Rosicky}. Let $Sp^O_{\Delta}$ denote orthogonal spectra built on $\Delta$-generated spaces, i.e. where each space in the spectrum is a $\Delta$-generated space. Let $G$ be a compact Lie group and let $GSp^O_{\Delta}$ denote $G$-equivariant orthogonal spectra built on $\Delta$-generated spaces. 

\begin{lemma}
The following model structures exist and are left proper and combinatorial: the levelwise injective model structure on $Sp^O_{\Delta}$, the stable injective model structure on $Sp^O_{\Delta}$, the levelwise injective model structure on $GSp^O_{\Delta}$, and the stable injective model structure on $GSp^O_{\Delta}$.
\end{lemma}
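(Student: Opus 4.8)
The plan is to produce all four injective model structures by the standard machinery for left Bousfield localization combined with Jeff Smith's recognition theorem for combinatorial model categories, starting from the (already existing) injective \emph{levelwise} structures and applying a localization to pass to the stable ones. First I would construct the \emph{levelwise injective} model structure on $Sp^O_\Delta$: cofibrations and weak equivalences are defined levelwise (using the Kan--Quillen or Serre weak equivalences on $\Delta$-generated spaces, which form a combinatorial model category), and fibrations are defined by the right lifting property. Existence and the fact that this is combinatorial follows from a diagram-category argument: $Sp^O_\Delta$ is a category of diagrams indexed by the topological category $\mathcal{J}$ of orthogonal inner product spaces and linear isometric isomorphisms, valued in a locally presentable category, so the injective structure exists by Lurie (HTT A.2.8.2) or by the argument in \cite{barwickSemi} for combinatorial model categories. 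Left properness is immediate because every object is cofibrant (all monomorphisms are cofibrations in the injective structure). The equivariant case $GSp^O_\Delta$ is identical, replacing spaces by $G$-spaces with their injective (Borel) model structure, or more precisely by diagrams over $\mathcal{J}_G$; the point is only that $G$-spaces built on $\Delta$-generated spaces form a combinatorial, left proper model category, which they do.

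Next I would obtain the \emph{stable injective} structures as left Bousfield localizations of the levelwise injective structures at the usual set of stabilization maps $\lambda_V \colon F_{V\oplus W}S^W \to F_V S^0$ (and their $G$-equivariant analogues, one for each $V$ in a chosen complete $G$-universe). Because the levelwise injective structures are left proper and combinatorial, the localization exists by \cite{barwickSemi}, is again left proper and combinatorial, and its cofibrations agree with the levelwise injective cofibrations (the monomorphisms). The only thing to check is that the local objects are precisely the injectively-fibrant $\Omega$-spectra, so that the $\C$-local equivalences coincide with the stable equivalences; this is the standard comparison (cf. \cite{MMSS}, \cite{mandell-may-equivariant}) between the stable equivalences and the maps inverted by this localization, and it carries over verbatim because the weak equivalences of the underlying spaces/$G$-spaces are unchanged.

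The main obstacle I anticipate is purely point-set: one must make sure that $\Delta$-generated spaces behave well enough — in particular that the category of $\Delta$-generated spaces (resp.\ $G$-objects therein) is locally presentable and carries a combinatorial model structure Quillen equivalent to the usual one — so that Smith's theorem and the combinatorial localization machinery apply. This is exactly why $\Delta$-generated spaces were introduced in place of $k$-spaces (see \cite{dugger-delta-generated}): $k$-spaces fail to be locally presentable, whereas $\Delta$-generated spaces form a locally presentable, cartesian closed category with a combinatorial model structure. Once that foundational fact is in hand, everything else is a routine invocation of: (i) the injective model structure on a diagram category valued in a combinatorial model category exists and is combinatorial; (ii) it is left proper since all objects are cofibrant; and (iii) left Bousfield localization of a left proper combinatorial model category is again left proper and combinatorial. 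I would therefore organize the proof as: establish (i)--(iii) as the ambient facts, build the two levelwise injective structures, localize to get the two stable injective structures, and note left properness is preserved throughout.
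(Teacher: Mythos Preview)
Your overall architecture---build the levelwise injective structure first, then left Bousfield localize to obtain the stable one---matches the paper exactly, and your identification of local presentability of $\Delta$-generated spaces as the crucial foundational input is on target. The gap is in the existence step for the levelwise injective structures. You invoke Lurie's HTT~A.2.8.2 (or \cite{barwickSemi}), but those results produce injective model structures on ordinary functor categories $\M^{\mathcal{C}}$ with $\mathcal{C}$ a small \emph{discrete} category. Orthogonal spectra are not of this form: they are continuous ($Top$-enriched) functors out of a topological category, or equivalently $S$-modules in a Day-convolution monoidal category of orthogonal sequences. The enrichment is not cosmetic; it is precisely what makes the problem of exhibiting a \emph{set} of generating levelwise (trivial) cofibrations nontrivial, and the unenriched theorems you cite do not supply one. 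The paper does not finesse this: it adapts the Hovey--Shipley--Smith argument (Theorem~5.1.2 and Lemma~5.1.4 of \cite{hovey-shipley-smith}) directly, using Zorn's lemma and a countability argument to assemble an explicit generating set $C$ (resp.\ $tC$) consisting of one representative from each isomorphism class of (trivial) cofibrations with countable codomain. That hands-on construction is the content your ``routine invocation'' is eliding.

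A secondary point: your left-properness argument (``all monomorphisms are cofibrations, so every object is cofibrant'') is a simplicial reflex that does not obviously transplant to $\Delta$-generated spaces, where arbitrary monomorphisms are not cofibrations in the Quillen structure. The paper instead deduces left properness of the injective structures from left properness of $\Delta$-generated spaces themselves, which is the more robust route in the topological setting.
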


\begin{proof}
Left properness will be inherited from $\Delta$-generated spaces, where it is verified just as for topological spaces \cite{dugger-delta-generated}. For the existence of these model structures, we proceed as in Theorem 5.1.2 and Lemma 5.1.4 of \cite{hovey-shipley-smith}. Verification of lifting and factorization make use of a set $C$ (resp. $tC$) containing a map from each isomorphism class of (trivial) cofibrations $i:X\to Y$ where $Y$ is a countable spectrum. The use of Zorn's Lemma in Lemma 5.1.4 and the requisite countability from Lemmas 5.1.6 and 5.1.7 hold in this setting because of our decision to work with $\Delta$-generated spaces. The rest of Lemma 5.1.4 goes through mutatis mutandis, using properties of topological fibrations and using Lemma 12.2 in \cite{MMSS} when checking that injective cofibrations are closed under smashing with an arbitrary object. 

The sets $C$ and $tC$ serve as generating (trivial) cofibrations. Together with the fact that a category of spectra built on a locally presentable category is again locally presentable, this proves the model structures are combinatorial. 
The stable injective structures are obtained by Bousfield localization in the usual way, which exists because the levelwise structures are left proper and combinatorial.
\end{proof}

\begin{proposition} \label{prop:16-model}
Let $G$ be a compact Lie group and fix a universe $\cat{U}$, that we take to mean a $G$-universe when working equivariantly. Assume all spectra are built on $\Delta$-generated spaces. The following eight model structures (4 stable and 4 unstable) are $h$-monoidal:
\begin{enumerate}
\item The levelwise projective (stable) model structure on $G$-equivariant orthogonal spectra (of Theorem III.2.4 and III.4.2 in \cite{mandell-may-equivariant}).
\item The positive (stable) model structure on $G$-equivariant orthogonal spectra (of Theorem III.2.10 and III.5.3 in \cite{mandell-may-equivariant}).
\item The flat (stable) model structure on $G$-equivariant orthogonal spectra (of Theorem 2.3.13 of in \cite{stolz-thesis}).
\item The positive flat (stable) model structure on $G$-equivariant orthogonal spectra (obtained by redefining the cofibrations from the model structure above to be isomorphisms in level 0, of Theorem 2.3.27 in \cite{stolz-thesis}).
\end{enumerate}
\end{proposition}

Taking $G$ to be the trivial group yields eight model structures on orthogonal spectra, that this proposition proves are $h$-monoidal. They are the levelwise projective (stable) model structure (of Theorem 6.5 and Theorem 9.2 in \cite{MMSS}), the positive (stable) model structure on orthogonal spectra (of Theorem 14.1 and 14.2 in \cite{MMSS}), the flat (stable) model structure on orthogonal spectra (of Proposition 1.3.5 and 2.3.27 in \cite{stolz-thesis}), and the positive flat (stable) model structure on orthogonal spectra (of Proposition 1.3.10 and Theorem 2.3.27 in \cite{stolz-thesis}).

\begin{proof}[Proof of Proposition \ref{prop:16-model}]
The proof proceeds just as it does for Proposition \ref{prop:h-monoidal-sym-spec}, i.e. by comparison to the injective (stable) model structures in each of these settings. For the statement that for any cofibration $f$ and any object $X$, the map $f\otimes X$ is a cofibration in the corresponding injective model structure, we appeal to Lemma 12.2 of \cite{MMSS} (which works equally well in the equivariant context). Finally, we turn to the statement that for any trivial cofibration $f$ and any object $X$, the map $f\otimes X$ is a weak equivalence in the corresponding injective model structure. For the levelwise model structures above this property is inherited from spaces, e.g. by Lemma 12.2 in \cite{MMSS}. For the stable model structures we appeal to the monoid axiom on all of the model structures in the theorem and to the fact that projective (stable) equivalences are the same as injective (stable) equivalences. The monoid axiom has been verified in \cite{stolz-thesis} for all these model structures by Theorems 1.2.54 and 1.2.57 (both originally proven in \cite{MMSS}), 1.3.10, 2.2.46 and 2.2.50 (both originally from \cite{mandell-may-equivariant}), and 2.3.27.
\end{proof}

We return now to the question of the monoid axiom. It is proven in Proposition 2.5 of \cite{batanin-berger} that if $\M$ is left proper, $h$-monoidal, and the weak equivalences in $(\M \otimes I)$-cell are closed under transfinite composition, then $\M$ satisfies the monoid axiom. We will use this to find conditions on $\M$ so that $L_\C(\M)$ satisfies the monoid axiom. First, we improve Proposition 2.5 from \cite{batanin-berger} by replacing the third condition with the hypothesis that the (co)domains of $I$ are finite relative to the class of $h$-cofibrations (in the sense of Section 7.4 of \cite{hovey-book}).

\begin{proposition} \label{prop:helper-monoid-axiom}
Suppose $\M$ is cofibrantly generated, left proper, $h$-monoidal, and the (co)domains of $I$ are finite relative to the class of $h$-cofibrations. Then $\M$ satisfies the monoid axiom.
\end{proposition}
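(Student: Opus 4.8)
The plan is to reduce to Proposition 2.5 of \cite{batanin-berger}, quoted above: a left proper, $h$-monoidal model category in which the weak equivalences lying in $(\M\otimes I)$-cell are closed under transfinite composition satisfies the monoid axiom. Since $\M$ is left proper and $h$-monoidal by hypothesis, the entire burden is to deduce that closure property from the finiteness hypothesis.

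The first step is purely formal. By $h$-monoidality, $i\otimes\mathrm{id}_Z$ is an $h$-cofibration for every $i\in I$ and every object $Z$; and $h$-cofibrations are closed under cobase change and transfinite composition (\cite{batanin-berger}), so every map in $(\M\otimes I)$-cell is an $h$-cofibration. Hence a \emph{weak equivalence lying in $(\M\otimes I)$-cell} is in particular an $h$-cofibration that is a weak equivalence, and (since relative cell complexes are themselves closed under transfinite composition) the closure statement we need follows from the lemma: \emph{a transfinite composite $X_0\to X_\lambda=\colim_{\beta<\lambda}X_\beta$ in which every map $X_\beta\to X_{\beta+1}$ is both a weak equivalence and an $h$-cofibration is again a weak equivalence.}

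I would prove this lemma by transfinite induction on $\lambda$. The successor case is immediate from the two-out-of-three property. For a limit ordinal $\lambda$ one knows $X_0\to X_\beta$ is a weak equivalence for every $\beta<\lambda$, so the diagram $X_\bullet\colon\lambda\to\M$ is objectwise weakly equivalent to the constant diagram $\underline{X_0}$; passing to a Reedy-cofibrant replacement $\widehat X_\bullet\to X_\bullet$ one has $\colim_{\beta<\lambda}\widehat X_\beta\simeq\operatorname{hocolim}_{\beta<\lambda}X_\beta\simeq\operatorname{hocolim}_{\beta<\lambda}\underline{X_0}\simeq X_0$, with the map $X_0=\widehat X_0\to\colim\widehat X_\beta$ realizing this equivalence. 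Since the canonical comparison $\colim\widehat X_\beta\to\colim X_\beta=X_\lambda$ fits in a commuting triangle with $X_0\to X_\lambda$, it suffices to show the comparison map is a weak equivalence, and this is where the finiteness hypothesis enters: both $\colim\widehat X_\beta$ and $X_\lambda$ are transfinite composites of $h$-cofibrations (using that cofibrations are $h$-cofibrations in a left proper model category), so, factoring the comparison map as a relative $I$-cell complex followed by an $I$-injective map and using that the domains and codomains of $I$ are $\aleph_0$-small relative to the class of $h$-cofibrations, one checks that the cell-complex part is a weak equivalence: any lifting problem it could pose against a fibration becomes, after its data factors through bounded stages $\beta<\lambda$, solvable there, since over those stages the comparison is already a weak equivalence. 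This is exactly the structure of the proof of Proposition 2.5 of \cite{batanin-berger}, with $\aleph_0$-smallness relative to $h$-cofibrations substituting for their hypothesis that weak equivalences in $(\M\otimes I)$-cell compose transfinitely. Once the lemma is in hand, Proposition 2.5 of \cite{batanin-berger} gives the monoid axiom for $\M$.

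The main obstacle is precisely this limit step: transfinite composites of weak equivalences need not be weak equivalences, and the transition maps here are only $h$-cofibrations rather than honest cofibrations, so the ordinary colimit does not automatically compute the homotopy colimit and one cannot build a lift against a fibration one stage at a time. The entire argument rests on the fact that $\aleph_0$-smallness is demanded relative to the larger class of $h$-cofibrations rather than merely relative to cofibrations or to all of $\M$: this is exactly what allows the weak equivalence of $X_0\to X_\lambda$ to be detected by testing against the compact generators at bounded stages, where matters are already under control.
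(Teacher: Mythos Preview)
Your overall strategy is correct and coincides with the paper's: reduce to Proposition~2.5 of \cite{batanin-berger}, then prove the key lemma that a transfinite composite of trivial $h$-cofibrations is a weak equivalence by passing to a parallel Reedy-cofibrant diagram (the paper builds exactly this ladder, with $\widehat X_\beta\to X_\beta$ a trivial fibration at each successor stage). The two proofs diverge only in how they handle the comparison $\colim_\beta\widehat X_\beta\to\colim_\beta X_\beta$, and here your argument has a genuine gap.

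You propose to factor the comparison as a relative $I$-cell complex $j$ followed by an $I$-injective $p$, and then to show $j$ is a weak equivalence by solving lifting problems of $j$ against fibrations, claiming the data ``factors through bounded stages $\beta<\lambda$.'' But this cannot work: in such a lifting problem the codomain of $j$ is the intermediate object $W$ of your factorization, which has no colimit description over $\lambda$, and the target fibration $E\to B$ is arbitrary. There is nothing small on the left of this square and nothing filtered on the right, so the finiteness hypothesis gives no purchase. You have the roles reversed.

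The paper's move is to show the comparison map is itself a trivial fibration, i.e.\ to solve lifting problems with a generating cofibration $f\colon X\to Y$ in $I$ on the \emph{left} and the comparison on the \emph{right}. Now the finiteness hypothesis bites: since both towers consist of $h$-cofibrations (cofibrations on top, $h$-cofibrations below), the maps $X\to\colim_\beta\widehat X_\beta$ and $Y\to\colim_\beta X_\beta$ factor through some finite stage $k$, where $\widehat X_k\to X_k$ is a trivial fibration by construction; lift there and compose forward. This is precisely why one needs finiteness relative to $h$-cofibrations rather than just cofibrations, as you correctly note in your final paragraph --- but the mechanism is lifting against $I$, not lifting an $I$-cell complex against fibrations.
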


\begin{proof}
We follow the proof of Proposition 2.5 in \cite{batanin-berger}. Consider the class $\{f \otimes Z\;|\; Z\in \M, f\in J\}$. As $\M$ is $h$-monoidal, this is a class of trivial $h$-cofibrations. By Lemma 1.3 in \cite{batanin-berger}, $h$-cofibrations are closed under pushout. By Lemma 1.6 in \cite{batanin-berger}, because $\M$ is left proper, trivial $h$-cofibrations are closed under pushouts (e.g. because weak equivalences are closed under homotopy pushout). In order to prove $\{f \otimes Z\;|\; Z\in \M, f\in J\}$-cell is contained in the weak equivalences of $\M$ we must only prove that transfinite compositions of trivial $h$-cofibrations are weak equivalences.

Consider a $\lambda$-sequence $A_0\to A_1\to \dots \to A_\lambda$ of trivial $h$-cofibrations. Let $j_\beta$ denote the map $A_\beta \to A_{\beta+1}$ in this $\lambda$-sequence. As in Proposition 17.9.4 of \cite{hirschhorn} we may construct a diagram
\begin{align*}
\xymatrix{A_0' \ar[r] \ar[d]_{q_0} & A_1' \ar[r] \ar[d]_{q_1} & \dots \ar[d] \ar[r] & A_\beta' \ar[d]_{q_\beta} \ar[r] & \dots \\ A_0 \ar[r] & A_1 \ar[r] & \dots \ar[r] & A_\beta \ar[r] & \dots}
\end{align*}

in which each $A_\beta'$ is cofibrant, all the maps $A_\beta' \to A_\beta$ are trivial fibrations, and all the maps $A_\beta' \to A_{\beta+1}'$ are trivial cofibrations. Construction of this diagram proceeds by applying the cofibration-trivial fibration factorization iteratively to every composition $j_\beta \circ q_\beta:A_\beta' \to A_\beta \to A_{\beta+1}$ in order to construct $A_{\beta+1}'$. As $j_\beta$ and $q_\beta$ are both weak equivalences, so is their composite and so the cofibration $A_\beta' \to A_{\beta+1}'$ produced by the cofibration-trivial fibration factorization is a weak equivalence by the two out of three property.

We now show that the map $q_\lambda:A_\lambda' \to A_\lambda$ is a weak equivalence, following the approach of Lemma 7.4.1 in \cite{hovey-book}. Consider the lifting problem
\begin{align*}
\xymatrix{X \ar[r] \ar@{^(->}[d]_f & A_\lambda' \ar@{->>}[d]^{q_\lambda} \\
Y \ar[r] & A_\lambda}
\end{align*}

Where $f$ is in the set $I$ of generating cofibrations. Because the domains and codomains of maps in $I$ are finitely presented we know that the map $X\to A_\lambda'$ factors through some finite stage $A_n'$. Similarly, $Y\to A_\lambda$ factors through some finite stage $A_m$. Let $k = \max(n,m)$. The map $A_k'\to A_k$ is a trivial fibration so there is a lift $g:Y\to A_k'$. Define $h:Y\to A_\lambda'$ as the composite with $A_k'\to A_\lambda'$. 
\begin{align*}
\xymatrix{X \ar[r] \ar@{^(->}[d]_f & A_k' \ar[r] \ar[d] & A_\lambda' \ar[d]^{q_\lambda} \\
Y \ar[r] \ar[ur]^g \ar[urr]_(.7)h & A_k \ar[r] & A_\lambda}
\end{align*}

Both triangles in the left-hand square commute by definition of lift. The triangle featuring $g$ and $h$ commutes because it is a composition. So the triangle featuring $f$ and $h$ commutes. The right-hand square commutes by construction of $A_\lambda'$ and $A_\lambda$, so the trapezoid containing $g$ and $q_\lambda$ commutes. Thus, the triangle featuring $h$ and $q_\lambda$ commutes.

The existence of this lift $h$ for all $f\in I$ proves that $A_\lambda' \to A_\lambda$ is a trivial fibration. Now consider that transfinite compositions of trivial cofibrations are always trivial cofibrations, so $A_0'\to A_\lambda'$ is a weak equivalence. Furthermore, the vertical maps $q_0:A_0'\to A_0$ and $q_{\lambda}: A_\lambda'\to A_\lambda$ are trivial fibrations. So by the two out of three property, the map $A_0\to A_\lambda$ is a weak equivalence as required.
\end{proof}

It is shown in \cite{batanin-berger} that the compactness hypothesis of the proposition is satisfied for topological spaces, simplicial sets, equivariant and motivic spaces, and chain complexes. Similarly, it holds for all our categories of structured spectra because the sphere spectrum is $\aleph_0$-compact as a spectrum. Lastly, it holds for all the stable analogues of these structures because the compactness hypothesis is automatically preserved by any Bousfield localization (the set of generating cofibrations of $L_\C(\M)$ is simply $I$ again).

\begin{remark}
The proof above only uses the fact that the maps $j_\beta$ were $h$-cofibrations in order to factor $Y\to A_\lambda$ through some finite stage. So if the (co)domains of $I$ are finite relative to the class of weak equivalences then the proof above demonstrates that weak equivalences are preserved under transfinite composition. This property was already known classically for $sSet$ and $Ch(k)$, but fails for $L_{BP}(\Sp)$, the localization of symmetric spectra with respect to the spectrum $BP$, as discussed in Chapter 7 of \cite{hovey-book}.
\end{remark}

\begin{prop} \label{prop:implies-h-monoidal}
Suppose $\M$ a cofibrantly generated, left proper, $h$-monoidal, and that the (co)domains of $I$ are cofibrant and are finite relative to the class of $h$-cofibrations. Suppose cofibrant objects are flat. Let $L_\C$ be a monoidal Bousfield localization. Then $L_\C(\M)$ is $h$-monoidal.
\end{prop}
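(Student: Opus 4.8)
The plan is to verify the two halves of $h$-monoidality for $L_\C(\M)$ separately, using throughout that $L_\C(\M)$ is left proper, has the same cofibrations and trivial fibrations as $\M$, is again tractable (Proposition 4.3 of \cite{hovey-comodules}), and has cofibrant objects flat because $L_\C$ is a monoidal Bousfield localization (Definition \ref{defn-monoidal-localization} and Theorem \ref{thm:PPAxiom}). The key preliminary observation I would establish is that every $h$-cofibration of $\M$ is an $h$-cofibration of $L_\C(\M)$: by Proposition 1.5 of \cite{batanin-berger} an $\M$-$h$-cofibration $h$ factors as a cofibration followed by an $\M$-cofiber equivalence $w$, and since every pushout of $w$ is an $\M$-weak equivalence, hence a $\C$-local equivalence, $w$ is also a cofiber equivalence in the left proper category $L_\C(\M)$; applying Proposition 1.5 in $L_\C(\M)$ then shows $h$ is an $L_\C(\M)$-$h$-cofibration. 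The cofibration half of $h$-monoidality is then immediate: a cofibration $f$ of $L_\C(\M)$ is a cofibration of $\M$, so $f\otimes Z$ is an $\M$-$h$-cofibration by $h$-monoidality of $\M$, hence an $L_\C(\M)$-$h$-cofibration.

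For the trivial-cofibration half, let $f$ be a trivial cofibration of $L_\C(\M)$; by the previous step $f\otimes Z$ is already an $h$-cofibration in $L_\C(\M)$, so it suffices to show it is a $\C$-local equivalence. Since $f$ is a retract of a relative $J_\C$-cell complex, and $-\otimes Z$ commutes with colimits and retracts while $\C$-local equivalences are closed under retracts, it is enough to treat $f$ in $J_\C$-cell, and for this it is enough to show that for each generating trivial cofibration $j\colon K\to L$ of $L_\C(\M)$ the map $j\otimes Z$ is a $\C$-local equivalence that is also an $h$-cofibration. Using tractability of $L_\C(\M)$ I would take $K$ and $L$ cofibrant. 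Comparing $j\otimes QZ$ to $j\otimes Z$ along $QZ\to Z$, the top map $j\otimes QZ$ is a $\C$-local equivalence since cofibrant objects are flat in $L_\C(\M)$, the vertical maps $K\otimes(QZ\to Z)$ and $L\otimes(QZ\to Z)$ are weak equivalences since cofibrant objects are flat in $\M$, and two-out-of-three gives that $j\otimes Z$ is a $\C$-local equivalence. Also $j$ is a cofibration of $\M$, so $j\otimes Z$ is an $\M$-$h$-cofibration, hence (first paragraph) a trivial $h$-cofibration of $L_\C(\M)$.

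A map in $J_\C$-cell is a transfinite composite of pushouts of the $j\in J_\C$, and tensoring with $Z$ turns it into a transfinite composite of pushouts of the maps $j\otimes Z$; since pushouts of $\M$-$h$-cofibrations are $\M$-$h$-cofibrations (Lemma 1.3 of \cite{batanin-berger}) and pushouts of trivial $h$-cofibrations in the left proper category $L_\C(\M)$ are again such (Lemma 1.6 of \cite{batanin-berger}), I would be reduced to showing that a transfinite composite of maps which are simultaneously $\M$-$h$-cofibrations and $\C$-local equivalences is a $\C$-local equivalence. This is precisely the argument of Proposition \ref{prop:helper-monoid-axiom} carried out inside $L_\C(\M)$: one builds the cofibrant-replacement tower $A'_\beta$ as in Proposition 17.9.4 of \cite{hirschhorn}, noting that each $A'_\beta\to A_\beta$ is a trivial fibration and each $A'_\beta\to A'_{\beta+1}$ is a cofibration which is a $\C$-local equivalence by two-out-of-three; a map out of a domain or codomain of $I$ into $A'_\lambda$ or $A_\lambda$ factors through a finite stage because all the connecting maps in both towers are $\M$-$h$-cofibrations and these objects are finite relative to $\M$-$h$-cofibrations by hypothesis; hence $A'_\lambda\to A_\lambda$ is a trivial fibration, and since $A'_0\to A'_\lambda$ is a transfinite composite of trivial cofibrations of $L_\C(\M)$, two-out-of-three shows $A_0\to A_\lambda$ is a $\C$-local equivalence. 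The step requiring the most care is this last one: re-running the transfinite-composition argument of Proposition \ref{prop:helper-monoid-axiom} with $\C$-local equivalences replacing $\M$-weak equivalences throughout, and verifying that finiteness relative to the class of $\M$-$h$-cofibrations (rather than to the a priori larger class of $L_\C(\M)$-$h$-cofibrations) is exactly what is needed to locate the factorizations through a finite stage.
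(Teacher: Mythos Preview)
Your proof is correct and follows essentially the same approach as the paper: both use Proposition 1.5 of \cite{batanin-berger} to transfer the $h$-cofibration property from $\M$ to $L_\C(\M)$ via the cofibration--cofiber-equivalence factorization, then handle the trivial case by reducing to generating maps via tractability and the cofibrant-objects-flat comparison $j\otimes QZ \to j\otimes Z$, and finally invoke the closure properties from Lemmas 1.3, 1.6 and the transfinite-composition argument of Proposition \ref{prop:helper-monoid-axiom}. Your only organizational difference is that you abstract the statement ``every $\M$-$h$-cofibration is an $L_\C(\M)$-$h$-cofibration'' as a preliminary observation, whereas the paper proves it inline for the particular map $f\otimes Z$; the content is the same.
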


\begin{proof}
Suppose $f:A\to B$ is a cofibration in $L_\C(\M)$ and $Z$ is any object of $L_\C(\M)$. We must show $f\otimes Z$ is an $h$-cofibration in $L_\C(\M)$. Because $L_\C(\M)$ is left proper, Proposition 1.5 in \cite{batanin-berger} reduces us to proving that there is a factorization of $f\otimes Z$ into a cofibration followed by a cofiber equivalence $w:X\to B\otimes Z$, i.e. for any map $g:X\to K$ the right-hand vertical map in the following pushout diagram is a $\C$-local equivalence:
\begin{align*}
\xymatrix{X \ar[r] \ar[d]_w \po & K \ar[d] \\
B\otimes Z \ar[r] & T}
\end{align*}

Because $f$ is a cofibration in $\M$, the $h$-monoidality of $\M$ guarantees us that $f\otimes Z$ is an $h$-cofibration in $\M$. Apply the cofibration-trivial fibration factorization in $\M$. Note that this is also a cofibration-trivial fibration factorization of $f\otimes Z$ in $L_\C(\M)$ because cofibrations and trivial fibrations in the two model categories agree. The resulting $w:X\to B\otimes Z$ is a trivial fibration in either model structure. Because $\M$ is left proper we know that the map $w$ is a cofiber equivalence in $\M$ by Proposition 1.5 in \cite{batanin-berger} applied to the $h$-cofibration $f\otimes Z$. So in any pushout diagram as above the map $K\to T$ is a weak equivalence in $\M$, hence in $L_\C(\M)$. Thus, $w$ is a cofiber equivalence in $L_\C(\M)$ and its existence proves $f\otimes Z$ is an $h$-cofibration in $L_\C(\M)$.

Now suppose $f$ were a trivial cofibration in $L_\C(\M)$ to start. We must show that $f\otimes Z$ is a $\C$-local equivalence. We do this first in the case where $f$ is a generating trivial cofibration. By hypothesis, $A$ and $B$ are cofibrant. Apply cofibrant replacement to $Z$:
\begin{align*}
\xymatrix{A\otimes QZ \ar[r] \ar[d] & B\otimes QZ\ar[d]\\
A \otimes Z \ar[r] & B\otimes Z}
\end{align*}

The fact that cofibrant objects are flat in $L_\C(\M)$ implies the vertical maps are $\C$-local equivalences (because $A$ and $B$ are cofibrant) and that the top horizontal map is a $\C$-local equivalence (because $QZ$ is cofibrant). By the two out of three property the bottom horizontal map is a $\C$-local equivalence.

By Lemma 1.3 in \cite{batanin-berger}, the class of $h$-cofibrations is closed under cobase change and retracts. By Lemma 1.6, the class of trivial $h$-cofibrations is closed under cobase change (because $L_\C(\M)$ is left proper). Weak equivalences are always closed under retract. Finally, by Proposition \ref{prop:helper-monoid-axiom} the class of trivial $h$-cofibrations is closed under transfinite composition by our compactness hypothesis on $\M$ (equivalently, on $L_\C(\M)$). So for a general $f$ in the trivial cofibrations of $L_\C(\M)$, realize $f$ as a retract of $g \in J_\C$-cell, so that $g\otimes Z$ is a transfinite composite of pushouts of maps of the form $j\otimes Z$ for $j\in J_\C$. We have just proven that all $j\otimes Z$ are trivial $h$-cofibrations and closure properties imply $g\otimes Z$ and hence $f\otimes Z$ are trivial $h$-cofibrations as well.

\end{proof}

\begin{theorem} \label{thm:monoid-axiom-loc}
Suppose $\M$ is a cofibrantly generated, left proper, $h$-monoidal model category such that the (co)domains of $I$ are cofibrant and are finite relative to the $h$-cofibrations and cofibrant objects are flat. Then for any monoidal Bousfield localization $L_\C$, the model category $L_\C(\M)$ satisfies the monoid axiom.
\end{theorem}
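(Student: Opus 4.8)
The plan is to deduce this from the Proposition immediately preceding it together with Proposition \ref{prop:helper-monoid-axiom}. Concretely, I would verify that $L_\C(\M)$ itself satisfies all four hypotheses of Proposition \ref{prop:helper-monoid-axiom}: cofibrant generation, left properness, $h$-monoidality, and the compactness condition that the (co)domains of the generating cofibrations are finite relative to the $h$-cofibrations. Once these are in place, Proposition \ref{prop:helper-monoid-axiom} applied to $L_\C(\M)$ immediately gives the monoid axiom for $L_\C(\M)$.

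First I would recall the formal features of left Bousfield localization from \cite{hirschhorn}: $L_\C(\M)$ is cofibrantly generated with the \emph{same} generating cofibrations $I$ as $\M$ (only the generating trivial cofibrations change), and left properness passes from $\M$ to $L_\C(\M)$ since the cofibrations are unchanged and every $\M$-weak equivalence is a $\C$-local equivalence. The $h$-monoidality of $L_\C(\M)$ is exactly the content of the Proposition preceding this theorem, whose hypotheses --- $\M$ tractable, left proper, $h$-monoidal, with $(\mathrm{co})$domains of $I$ finite relative to the $h$-cofibrations, cofibrant objects flat, and $L_\C$ a monoidal Bousfield localization --- are precisely those assumed here. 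So three of the four hypotheses of Proposition \ref{prop:helper-monoid-axiom} are in hand for $L_\C(\M)$ essentially for free.

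The one point requiring a little care is the compactness hypothesis, since a priori the class of $h$-cofibrations could differ between $\M$ and $L_\C(\M)$ (the defining condition refers to preservation of weak equivalences, and $L_\C(\M)$ has more of those). Here I would use that $h$-cofibrations coincide with cofibrations in any left proper model category (Lemma 1.2 of \cite{batanin-berger}, as recalled above): the $h$-cofibrations of $\M$ are the cofibrations of $\M$, and the $h$-cofibrations of $L_\C(\M)$ are the cofibrations of $L_\C(\M)$, which are again the cofibrations of $\M$. Thus the class of $h$-cofibrations is literally unchanged by the localization. Since $I$ is also unchanged, the hypothesis that the $(\mathrm{co})$domains of $I$ are finite relative to the $h$-cofibrations transfers verbatim from $\M$ to $L_\C(\M)$, and Proposition \ref{prop:helper-monoid-axiom} applies.

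The main obstacle is really just this last identification; everything else is bookkeeping about which pieces of structure survive localization, and the substantive input (that $L_\C(\M)$ is $h$-monoidal) has already been isolated in the preceding proposition. So the present argument is a short assembly: use the preceding proposition to get $h$-monoidality of $L_\C(\M)$, observe that left properness and cofibrant generation are inherited, invoke left properness to see the $h$-cofibrations of $L_\C(\M)$ are exactly the cofibrations of $\M$ so the finiteness condition is preserved, and then quote Proposition \ref{prop:helper-monoid-axiom}.
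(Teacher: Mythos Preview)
Your overall strategy matches the paper's---apply Proposition \ref{prop:helper-monoid-axiom} to $L_\C(\M)$, with $h$-monoidality supplied by the preceding proposition---but there is a genuine error in how you handle the compactness hypothesis.

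You claim that in a left proper model category the $h$-cofibrations \emph{coincide} with the cofibrations, citing Lemma 1.2 of \cite{batanin-berger}. That lemma does not say this. What it (and the passage recalled above) asserts is the equivalence: $\M$ is left proper if and only if every cofibration is an $h$-cofibration. The containment goes only one way; $h$-cofibrations form a generally larger class (for instance, in topological spaces closed Hurewicz cofibrations are $h$-cofibrations but need not be Serre cofibrations). Consequently your identification of the $h$-cofibrations of $L_\C(\M)$ with the cofibrations of $\M$ is unjustified, and you cannot conclude that the finiteness condition transfers ``verbatim.'' Indeed, there is no obvious containment between the $h$-cofibrations of $\M$ and those of $L_\C(\M)$, since the defining condition refers to preservation of weak equivalences and these classes differ.

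The paper closes this gap not by verifying the compactness hypothesis for $L_\C(\M)$ abstractly, but by tracing through the proof of Proposition \ref{prop:helper-monoid-axiom}: the transfinite compositions at issue are built from pushouts of maps $f\otimes Z$ with $f$ a trivial cofibration in $L_\C(\M)$, and such $f$ is in particular a cofibration in $\M$, so $f\otimes Z$ is an $h$-cofibration \emph{in $\M$} by $h$-monoidality of $\M$. Thus the maps in the $\lambda$-sequence are $\M$-$h$-cofibrations, and the finiteness hypothesis for $\M$ is exactly what is needed to run the lifting argument. You should replace your final paragraph with this observation.
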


\begin{proof}
Apply Proposition \ref{prop:helper-monoid-axiom} to the category $L_\C(\M)$. By Proposition \ref{prop:implies-h-monoidal}, $L_\C(\M)$ is $h$-monoidal. It is left proper because $\M$ is left proper. To verify the monoid axiom, consider a $\lambda$-sequence of maps that are pushouts of maps in $\{f\otimes Z\;|\; f$ is a trivial cofibration in $L_\C(\M)\}$. Such maps are $h$-cofibrations in $\M$ because $\M$ is $h$-monoidal, $f$ is a cofibration in $\M$, and $h$-cofibrations are closed under pushout. Thus, the hypothesis that the (co)domains of $I$ are finite relative to the $h$-cofibrations in $\M$ is sufficient to construct the lift in Proposition \ref{prop:helper-monoid-axiom} and to prove the transfinite composition part of the proof of the monoid axiom.
\end{proof}


\begin{thebibliography}{10}

\bibitem[Bar09]{BarnesSplitting}
D.~Barnes.
\newblock Splitting monoidal stable model categories.
\newblock {\em J. Pure Appl. Algebra}, 213(5):846--856, 2009.

\bibitem[BR14]{Barnes12LeftAndRight}
David Barnes and Constanze Roitzheim.
\newblock Stable left and right {B}ousfield localisations.
\newblock {\em Glasg. Math. J.}, 56(1):13--42, 2014.

\bibitem[Bar10]{barwickSemi}
Clark Barwick.
\newblock On left and right model categories and left and right {B}ousfield
  localizations.
\newblock {\em Homology, Homotopy Appl.}, 12(2):245--320, 2010.

\bibitem[BB17]{batanin-berger}
Michael Batanin and Clemens Berger.
\newblock Homotopy theory for algebras over polynomial monads, {\em Theory Appl. Categ. 32}, 2017. 

\bibitem[BW16]{batanin-white-eilenberg}
    Michael Batanin and David White.
    \newblock Left {B}ousfield Localization and {E}ilenberg-{M}oore Categories, available as arXiv:1606.01537.


\bibitem[BM03]{BM03}
Clemens Berger and Ieke Moerdijk.
\newblock Axiomatic homotopy theory for operads.
\newblock {\em Comment. Math. Helv.}, 78(4):805--831, 2003.

\bibitem[BH15]{blumberg-hill}
Andrew~J. Blumberg and Michael~A. Hill.
\newblock Operadic multiplications in equivariant spectra, norms, and transfers.
\newblock {\em Advances in Mathematics}, 285:658--708, 2015.


\bibitem[Bou75]{bousfield-localization-spaces-wrt-homology}
A.~K. Bousfield.
\newblock The localization of spaces with respect to homology.
\newblock {\em Topology}, 14:133--150, 1975.

\bibitem[Bou79]{bous79}
A.~K. Bousfield.
\newblock The localization of spectra with respect to homology.
\newblock {\em Topology}, 18(4):257--281, 1979.

\bibitem[Bou97]{bous-spaces} A. K. Bousfield, Homotopical localizations of spaces, American Journal of Mathematics, Volume 119, Number 6,  1997, pp. 1321-1354.

\bibitem[CGMV10]{CGMV}
Carles Casacuberta, Javier~J. Guti{\'e}rrez, Ieke Moerdijk, and Rainer~M. Vogt.
\newblock Localization of algebras over coloured operads.
\newblock {\em Proc. Lond. Math. Soc. (3)}, 101(1):105--136, 2010.

\bibitem[CRT16]{eternal-preprint}
Carles Casacuberta, Jos\'{e}~L. Rodr\'{i}guez, and Jin-Yen Tai.
\newblock Localizations of abelian {E}ilenberg-{M}ac lane spaces of finite type,
  {\em Algebraic and Geometric Topology}, 16, 2379–2420, 2016.

\bibitem[Dug03]{dugger-delta-generated}
Daniel Dugger.
\newblock Notes on delta-generated spaces, preprint available electronically
  from http://math.uoregon.edu/~ddugger/delta.html.
\newblock 2003.

\bibitem[DS95]{dwyer-spalinski}
W.~G. Dwyer and J.~Spali{\'n}ski.
\newblock Homotopy theories and model categories.
\newblock In {\em Handbook of algebraic topology}, pages 73--126.
  North-Holland, Amsterdam, 1995.

\bibitem[EKMM97]{EKMM}
A.~D. Elmendorf, I.~Kriz, M.~A. Mandell, and J.~P. May.
\newblock {\em Rings, modules, and algebras in stable homotopy theory},
  volume~47 of {\em Mathematical Surveys and Monographs}.
\newblock American Mathematical Society, Providence, RI, 1997.
\newblock With an appendix by M. Cole.

\bibitem[FR08]{Fajstrup-Rosicky} Lisbeth Fajstrup and Jiri Rosick\'{y}, A Convenient Category for Directed Homotopy, Theory and Applications of Categories, Vol. 21, 2008, No. 1, pp 7-20.


\bibitem[Far96]{farjoun}
Emmanuel~Dror Farjoun.
\newblock {\em Cellular spaces, null spaces and homotopy localization}, volume
  1622 of {\em Lecture Notes in Mathematics}.
\newblock Springer-Verlag, Berlin, 1996.

\bibitem[FI07]{fausk-isaksen} H. Fausk and D. Isaksen, t-model structures, Homology, Homotopy and Applications, vol. 9(1), 2007, pp.399-438.


\bibitem[Fre09]{fresse-book}
Benoit Fresse.
\newblock {\em Modules over operads and functors}, volume 1967 of {\em Lecture
  Notes in Mathematics}.
\newblock Springer-Verlag, Berlin, 2009.

\bibitem[GG13]{gorchinskiy-symmetrizable}
Sergey Gorchinskiy and Vladimir Guletskii.
\newblock Symmetric powers in stable homotopy categories, version 3, preprint
  available electronically from http://arxiv.org/abs/0907.0730.
\newblock 2013.

\bibitem[GRS{\O}12]{gutierrez-rondigs-spitzweck-ostvaer}
Javier~J. Guti{\'e}rrez, Oliver R{\"o}ndigs, Markus Spitzweck, and Paul~Arne
  {\O}stv{\ae}r.
\newblock Motivic slices and colored operads.
\newblock {\em Journal of Topology}, 5:727--755, 2012.

\bibitem[GW13]{gutierrez-white-equivariant}
Javier~J. Guti{\'e}rrez and David White.
\newblock Encoding equivariant commutativity via operads, preprint available as  arXiv:1707.02130.
\newblock 2013.

\bibitem[Har10]{harper-operads}
John~E. Harper.
\newblock Homotopy theory of modules over operads and non-{$\Sigma$} operads in
  monoidal model categories.
\newblock {\em J. Pure Appl. Algebra}, 214(8):1407--1434, 2010.

\bibitem[HH11]{oberwolfach}
Michael~A. Hill and Michael~J. Hopkins.
\newblock Localizations of equivariant commutative rings.
\newblock In {\em Mathematisches Forschungsinstitut Oberwolfach}, Report No. 46/2011, pages 2640--2643.

\bibitem[HH13]{hill-hopkins}
Michael~A. Hill and Michael~J. Hopkins.
\newblock Equivariant multiplicative closure, available as arXiv:1303.4479.
\newblock 2013.

\bibitem[HHR16]{kervaire}
Michael~A. Hill, Michael~J. Hopkins, and Douglas~C. Ravenel.
\newblock On the nonexistence of elements of {K}ervaire invariant one.
\newblock {\em Annals of Mathematics}, 184:1--262, 2016.

\bibitem[Hir03]{hirschhorn}
Philip~S. Hirschhorn.
\newblock {\em Model categories and their localizations}, volume~99 of {\em
  Mathematical Surveys and Monographs}.
\newblock American Mathematical Society, Providence, RI, 2003.

\bibitem[Hov98]{hovey-monoidal}
Mark Hovey.
\newblock Monoidal model categories, preprint available electronically from
  http://arxiv.org/abs/math/9803002.
\newblock 1998.

\bibitem[Hov99]{hovey-book}
Mark Hovey.
\newblock {\em Model categories}, volume~63 of {\em Mathematical Surveys and
  Monographs}.
\newblock American Mathematical Society, Providence, RI, 1999.

\bibitem[Hov01]{hovey-spectra}
Mark Hovey.
\newblock Spectra and symmetric spectra in general model categories.
\newblock {\em J. Pure Appl. Algebra}, 165(1):63--127, 2001.

\bibitem[Hov04]{hovey-comodules}
Mark Hovey.
\newblock Homotopy theory of comodules over a {H}opf algebroid.
\newblock In {\em Homotopy theory: relations with algebraic geometry, group
  cohomology, and algebraic {$K$}-theory}, volume 346 of {\em Contemp. Math.},
  pages 261--304. Amer. Math. Soc., Providence, RI, 2004.

\bibitem[HPS97]{hovey-palmieri-strickland}
Mark Hovey, John~H. Palmieri, and Neil~P. Strickland.
\newblock Axiomatic stable homotopy theory.
\newblock {\em Mem. Amer. Math. Soc.}, 128(610):x+114, 1997.

\bibitem[HSS00]{hovey-shipley-smith}
Mark Hovey, Brooke Shipley, and Jeff Smith.
\newblock Symmetric spectra.
\newblock {\em J. Amer. Math. Soc.}, 13(1):149--208, 2000.

\bibitem[HW13]{hovey-white}
Mark Hovey and David White.
\newblock An alternative approach to equivariant stable homotopy theory, available as arXiv:1312.3846.
\newblock 2013.

\bibitem[LMSM86]{lewis-may-steinberger}
L.~G. Lewis, Jr., J.~P. May, M.~Steinberger, and J.~E. McClure.
\newblock {\em Equivariant stable homotopy theory}, volume 1213 of {\em Lecture
  Notes in Mathematics}.
\newblock Springer-Verlag, Berlin, 1986.
\newblock With contributions by J. E. McClure.

\bibitem[Lur09]{htt}
J. Lurie, Higher Topos Theory, Annals of Math. Studies, Princeton Univ. Press, Princeton, NJ, 2009.

\bibitem[MMWWY15]{user-guide-project} Cary Malkiewich, Mona Merling, David White, Frank Lucas Wolcott, Carolyn Yarnall, The User's Guide Project: Giving Experiential Context to Research Papers, {\em Journal of Humanistic Mathematics}, Vol. 5, Issue 2 (July 2015), pages 186-188. .

\bibitem[MM02]{mandell-may-equivariant}
M.~A. Mandell and J.~P. May.
\newblock Equivariant orthogonal spectra and {$S$}-modules.
\newblock {\em Mem. Amer. Math. Soc.}, 159(755):x+108, 2002.

\bibitem[MMSS01]{MMSS}
M.~A. Mandell, J.~P. May, S.~Schwede, and B.~Shipley.
\newblock Model categories of diagram spectra.
\newblock {\em Proc. London Math. Soc. (3)}, 82(2):441--512, 2001.

\bibitem[May72]{MayGeometry}
J.~P. May.
\newblock {\em The geometry of iterated loop spaces}.
\newblock Springer-Verlag, Berlin, 1972.
\newblock Lectures Notes in Mathematics, Vol. 271.

\bibitem[McC96]{mcclure-E-infty-Tate}
J.~E. McClure.
\newblock {$E_\infty$}-ring structures for {T}ate spectra.
\newblock {\em Proc. Amer. Math. Soc.}, 124(6):1917--1922, 1996.

\bibitem[PS15]{dmitri}
Dmitri Pavlov and Jakob Scholbach.
\newblock Homotopy Theory of Symmetric Powers. Available as arXiv:1510.04969.

\bibitem[Qui69]{quillen-rational-annals}
Daniel Quillen.
\newblock Rational homotopy theory.
\newblock {\em Ann. of Math. (2)}, 90:205--295, 1969.

\bibitem[Sch15]{schwede-equivariant-lectures}
Stefan Schwede.
\newblock Lectures on equivariant stable homotopy theory, preprint available
  electronically from
  http://www.math.uni-bonn.de/people/schwede/equivariant.pdf.

\bibitem[Sch12]{schwede-book-symmetric-spectra}
Stefan Schwede.
\newblock An untitled book project about symmetric spectra, preprint available
  electronically from http://www.math.uni-bonn.de/~schwede/symspec.pdf.

\bibitem[SS00]{SS00}
Stefan Schwede and Brooke~E. Shipley.
\newblock Algebras and modules in monoidal model categories.
\newblock {\em Proc. London Math. Soc. (3)}, 80(2):491--511, 2000.

\bibitem[Shi04]{shipley-positive}
Brooke Shipley.
\newblock A convenient model category for commutative ring spectra.
\newblock In {\em Homotopy theory: relations with algebraic geometry, group
  cohomology, and algebraic {$K$}-theory}, volume 346 of {\em Contemp. Math.},
  pages 473--483. Amer. Math. Soc., Providence, RI, 2004.

\bibitem[Sim12]{simpson-book}
Carlos Simpson.
\newblock {\em Homotopy theory of higher categories}, volume~19 of {\em New
  Mathematical Monographs}.
\newblock Cambridge University Press, Cambridge, 2012.

\bibitem[Spi01]{spitzweck-thesis}
Markus Spitzweck.
\newblock Operads, algebras and modules in general model categories, preprint
  available electronically from http://arxiv.org/abs/math/0101102.
\newblock 2001.

\bibitem[Sto11]{stolz-thesis}
Martin Stolz.
\newblock Equivariant structure on smash powers of commutative ring spectra,
  ph.d. thesis available electronically from
  http://folk.uib.no/hus001/data/thesismartinstolz.pdf.
\newblock 2011.

\bibitem[Whi14a]{white-commutative-monoids}
David White.
\newblock Model structures on commutative monoids in general model categories. arXiv:1403.6759.
\newblock {\em Journal of Pure and Applied Algebra}, Volume 221, Issue 12, 2017, Pages 3124-3168.

\bibitem[Whi14b]{white-thesis}
David White.
\newblock Monoidal bousfield localizations and algebras over operads.
\newblock 2014.
\newblock Thesis (Ph.D.)--Wesleyan University.

\bibitem[Whi15]{white-user-guide} David White. 
A user's guide:  Monoidal Bousfield localizations and
algebras over operads, {\em Enchiridion: Mathematical User’s Guides}, Vol. 1 (2015). Available electronically at \href{https://usersguideproject.files.wordpress.com/2015/10/ug1-white.pdf}{https://usersguideproject.files.wordpress.com/2015/10/ug1-white.pdf} and as arXiv:1801.03191.

\bibitem[WY15]{white-yau}
D. White and D. Yau, Bousfield localization and algebras over colored operads, to appear, Applied Categorical Structures, 2017. Available as  arXiv:1503.06720.

\bibitem[WY15b]{white-yau-coloc}
David White and Donald Yau.
\newblock Right {B}ousfield localization and operadic algebras, available as  arxiv:1512.07570.
\newblock 2015.

\bibitem[WY16a]{white-yau3}
D. White and D. Yau, Homotopical adjoint lifting theorem, preprint available electronically from http://arxiv.org/abs/1606.01803.

\bibitem[WY16b]{white-yau4}
David White and Donald Yau. Right Bousfield Localization and Eilenberg-Moore Categories, available as arXiv:1609.03635.

\bibitem[WY17]{white-yau6} David White and Donald Yau. Smith Ideals of Operadic Algebras in Monoidal Model Categories, available as arXiv:1703.05377.

\end{thebibliography}
\end{document}